\theoremstyle{plain}
\newtheorem*{conj*}{Conjecture}
\newtheorem*{cor*}{Corollary}
\newtheorem{theorem}{Theorem}[section]
\newtheorem{proposition}[theorem]{Proposition}
\newtheorem{lemma}[theorem]{Lemma}
\theoremstyle{definition}
\newtheorem*{def*}{Definition}
\newtheorem{remark}[theorem]{Remark}
\newtheorem{definition}[theorem]{Definition}
\newcommand{\C}{\mathcal C}
\newcommand{\ga} {\gamma}
\renewcommand{\epsilon}{\varepsilon}
\newcommand{\Z}{\mathbb{Z}}
\newcommand{\N}{\mathbb{N}}
\newcommand{\R}{\mathbb{R}}
\newcommand{\eps}{\varepsilon}
\newcommand{\dist}{\operatorname{\textit{d}}}
\newcommand{\inte}{\operatorname{Int}}
\newcommand{\diam}{\operatorname{diam}}
\newcommand{\espinha}{\operatorname{Spin}}
\title[Periodic points for cw-hyperbolic homeomorphisms]{Stable/unstable holonomies, density of periodic points, and transitivity for continuum-wise hyperbolic homeomorphisms}
\author{Bernardo Carvalho}
\author{Elias Rego}
\thanks{2020 \emph{Mathematics Subject Classification}: Primary 37B45; Secondary 37D10.}
\keywords{cw-hyperbolicity, periodic points, transitivity.}
\begin{document}

\begin{abstract}
We discuss different regularities on stable/unstable
holonomies of cw-hyperbolic homeomorphisms and prove that if a cw-hyperbolic homeomorphism has continuous joint stable/unstable holonomies, then it has a dense set of periodic points in its non-wandering set. For that, we prove that the hyperbolic cw-metric (introduced in \cite{ACCV3}) can be adapted to be self-similar (as in \cite{Ar}) and, in this case, continuous joint stable/unstable holonomies are pseudo-isometric. We also prove transitivity of cw-hyperbolic homeomorphisms assuming that the stable/unstable holonomies are isometric. In the case the ambient space is a surface, we prove that a cw$_F$-hyperbolic homeomorphism has continuous joint stable/unstable holonomies when every bi-asymptotic sector is regular.
\end{abstract}

\maketitle

\section{Introduction}

Transitivity, density of periodic points and sensitivity to initial conditions are exactly the conditions on the definition of chaos in the sense of R. Devaney \cite{D}. Even though sensitivity is the one that captures the essence of chaos, it is possible to prove it assuming transitivity and density of periodic points (provided the ambient space is infinite). Thus, these two dynamical properties together are enough to obtain chaos. Hyperbolicity is a central notion in the study of chaotic dynamical systems, having appeared in the works of Anosov and Smale as a source of chaotic dynamics \cite{A}, \cite{S}. It is important to understand how several features of the dynamics of hyperbolic systems are present on chaotic systems. Anosov diffeomorphisms satisfy expansivity, that is much stronger than sensitivity, have a dense set of periodic points in their non-wandering set, and are (conjectured to be) transitive, thus satisfy all three conditions for chaos.

The density of periodic points in the non-wandering set can be proved in a topological scenario, with no assumptions on the derivative, assuming only expansiveness and shadowing. Indeed, a non-wandering point gives origin to a segment of orbit that starts and returns very close to itself, and this creates a periodic pseudo-orbit that must be shadowed by a unique periodic point because of shadowing and expansiveness. Actually, part of the hyperbolic dynamics can be proved combining these two properties (see the monograph \cite{AH}) and because of that, systems satisfying expansiveness and shadowing are usually called Topologically Hyperbolic.

The existence of periodic points assuming only the shadowing property (and without any form of expansiveness) has been discussed before in the literature (see \cite{KP} and the references therein). It is proved there that surface homeomorphisms satisfying the shadowing property have periodic points in every $\varepsilon$-transitive class. The density of periodic points in the non-wandering set is not discussed there and neither is the case of higher-dimensional manifolds or even more general spaces such as Peano continua (which are compact, connected and locally connected metric spaces).
An intermediate scenario could be discussed assuming shadowing and a weaker form of expansiveness, such as the continuum-wise expansiveness defined by Kato in \cite{Ka} which we now define. 

\begin{definition}
A homeomorphism $f$ of a compact metric space $X$ is \emph{continuum-wise expansive} if there exists $c>0$ such that $W^u_c(x)\cap W^s_c(x)$ is totally disconnected for every $x\in X$, where $W^s_c(x)$ and $W^u_c(x)$ denote the classical $c$-stable/unstable sets of $x$. The number $c>0$ is called a cw-expansive constant of $f$.
\end{definition}

We could try to prove the density of periodic points for cw-expansive homeomorphisms satisfying the shadowing property, but the proof explained above for the expansive case does not work. These systems can admit even a cantor set of distinct points shadowing a given pseudo-orbit, so a periodic pseudo-orbit can be shadowed by non-periodic points. Also, one problem that arise is that cw-expansive homeomorphisms satisfying the shadowing property could not satisfy a few of the classical results of the hyperbolic dynamics, such as the Spectral Decomposition Theorem (see \cite{CC} for an example with an infinite number of distinct chain-recurrent classes). This problem seems to be avoided with the introduction of continuum-wise hyperbolicity in \cite{ACCV3}, which we now define.

\begin{definition} \label{cwhyp}
A homeomorphism $f$ of a compact metric space $(X,d)$ satisfies the \emph{cw-local-product-structure} if for each $\eps>0$ there exists $\delta>0$ such that $$C^s_\epsilon(x)\cap C^u_\epsilon(y)\neq\emptyset \,\,\,\,\,\, \text{whenever} \,\,\,\,\,\, d(x,y)<\delta,$$ where $C^s_\epsilon(x)$ and $C^u_\epsilon(x)$ denote the connected component of $x$ on the sets $W^s_\epsilon(x)$ and $W^u_\epsilon(x)$, respectively. The cw-expansive homeomorphisms satisfying the cw-local-product-structure are called \emph{continuum-wise hyperbolic}.
\end{definition}

This notion was introduced in the context of the theory beyond topological hyperbolicity (see \cite{ACCV2}, \cite{ACCV3}, \cite{ACCV}, \cite{CC}, \cite{CC2}). For this special class of cw-expansive homeomorphisms satisfying the shadowing property, some of the classical results of the hyperbolic theory could be proved, such as a Shadowing Lemma in the form of the L-shadowing property, the Spectral Decomposition Theorem, and a hyperbolic cw-metric that sees hyperbolicity on local stable/unstable continua. Despite that, the density of periodic points in the non-wandering set was not proved in \cite{ACCV3} for cw-hyperbolic homeomorphisms. The lack of expansiveness and the difference between unique shadowing and L-shadowing could not go unnoticed. Indeed, in \cite{ACCV} there is an example of a topologically mixing homeomorphism satisfying the L-shadowing property but without periodic points. Thus, one could expect that cw-hyperbolic homeomorphisms could have a chain-recurrent class without periodic points. 

The difference between L-shadowing and cw-hyperbolicity is also important to be observed. It is proved in \cite{ACCV} that a homeomorphism satisfies the L-shadowing property if, and only if, it satisfies the shadowing property and the assymptotic local product structure: for each $\epsilon>0$ there is $\delta>0$ such that $$\dist(x,y)<\delta \,\,\,\,\,\, \text{implies} \,\,\,\,\,\, V^s_\epsilon(x)\cap V^u_\epsilon(y)\neq\emptyset,$$ where $$V^s_\epsilon(x)=W^s_\epsilon(x)\cap W^s(x) \,\,\,\,\,\, \text{and} \,\,\,\,\,\, V^u_\epsilon(y)=W^u_\epsilon(x)\cap W^u(x).$$ Since $C^s_\epsilon(x)\subset V^s_\epsilon(x)$ and $C^u_\epsilon(x)\subset V^u_\epsilon(x)$ (see \cite{Ka}) the cw-local-product-structure is stronger than the assymptotic local product structure (in the case of cw-expansive homeomorphisms). Also, cw-expansiveness implies the existence of a hyperbolic cw-metric (see \cite{ACCV3}), that is analogous to the hyperbolic metric of Fathi \cite{Fa} on the case of expansive homeomorphisms but for cw-expansive homeomorphisms. This is a feature of cw-hyperbolicity that is not necessarily present on systems with L-shadowing and will be explored in this article extensively. It will be especially important to define and discuss distinct forms of regularity of stable/unstable holonomies for cw-hyperbolic homeomorphisms.

It is known that topologically hyperbolic homeomorphisms have a well defined local product structure $[,]$ that is continuous (see Lemma 2 and Proposition 3 in \cite{Hi}) but in the case of cw-hyperbolic homeomorphisms the local stable/unstable holonomies seem to be a bit different since their images can contain more than one point (see Definition \ref{holonomies}) and we do not know whether they are continuous or not. The main result of this paper proves density of periodic points for cw-hyperbolic homeomorphisms assuming a joint continuity on stable/unstable holonomies (see Definition \ref{cw-cont}).

\begin{theorem}\label{Theorem A}
If a $cw$-hyperbolic homeomorphism has continuous joint stable/ unstable holonomies, then it has a dense set of periodic points in its non-wandering set. If local stable/unstable holonomies of a cw-hyperbolic homeomorphism are isometric, then it is transitive.
\end{theorem}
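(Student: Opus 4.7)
The overall strategy would be to mimic the classical Smale/Anosov proof of density of periodic points for expansive shadowing systems, but substitute the single-point bracket $[\,,\,]$ (which fails here because local stable/unstable intersections are continua, not points) by the \emph{continuous} joint stable/unstable holonomies, and use a suitable hyperbolic cw-metric to turn the hyperbolic behavior on stable/unstable continua into a genuine contraction. Two preliminary technical steps are needed before the main argument: first, refining the hyperbolic cw-metric from \cite{ACCV3} so that it is self-similar in the sense of \cite{Ar}, giving constants $\lambda>1$ and $\epsilon_0>0$ with $d(f(x),f(y))\leq \lambda^{-1}d(x,y)$ for $y\in C^s_{\epsilon_0}(x)$ and analogously for $f^{-1}$ on $C^u_{\epsilon_0}$; and second, verifying that in this self-similar metric, continuous joint holonomies are pseudo-isometric (the abstract announces both reductions).

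For the density statement, fix a non-wandering point $x$ and $\eta>0$. Using non-wandering, find $y$ with $d(x,y)<\eta/2$ and $n\geq 1$ such that $d(f^n(y),y)<\delta$ for a small $\delta$ to be chosen. The orbit segment extended periodically forms a $\delta$-pseudo-orbit of period $n$; since cw-hyperbolic systems enjoy shadowing (indeed L-shadowing, by \cite{ACCV3}), there exists $z$ that $\epsilon$-shadows it, in particular $d(f^n(z),z)<2\epsilon$. The cw-local-product-structure then produces a point $w\in C^s_{\epsilon_0}(z)\cap C^u_{\epsilon_0}(f^n(z))$. The plan is to turn $z\mapsto w\mapsto f^n(w)$ into a bona fide contraction on a small neighborhood using the joint holonomies: the continuous joint holonomy provides a single-valued continuous selection from the bracket, and self-similarity makes $f^n$ act as a $\lambda^{-n}$-contraction along the stable direction. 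The pseudo-isometry of the holonomies ensures this contraction persists after sliding along unstable continua, so a Banach-style fixed point argument inside a complete neighborhood yields $p$ with $f^n(p)=p$ within distance $\eta$ of $x$.

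For transitivity under isometric holonomies, I would first observe that isometric is a fortiori continuous, so the first part of the theorem yields density of periodic points in the non-wandering set, hence the Spectral Decomposition Theorem for cw-hyperbolic homeomorphisms (also from \cite{ACCV3}) applies and decomposes the non-wandering set into finitely many basic pieces. On each piece, isometric holonomies force an exact product structure on local charts. The key step is to show that different spectral pieces cannot coexist: the isometric sliding of unstable continua onto stable continua should force any two basic pieces meeting the closure of the same local chart to collapse into one. Combined with topological mixing on each basic piece (available from cw-hyperbolicity once density of periodic points is in hand), this gives transitivity on the whole space.

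The main obstacle lies in the fixed-point argument of the second paragraph. In the expansive setting, brackets are singletons and the Banach fixed point theorem applies directly; here, the bracket is a continuum, so one must exploit continuous joint holonomies to extract a \emph{single-valued, continuous} selection, and then verify via the self-similar metric and the pseudo-isometric estimates that the composition $f^n\circ(\text{selection})$ is actually a contraction on a closed ball in the cw-metric. The delicate point is that the natural domain of this map is set-valued (the continuum $C^s_{\epsilon_0}(z)\cap C^u_{\epsilon_0}(f^n(z))$), and without pseudo-isometry of holonomies one cannot rule out that iterating the selection causes the candidate domain to drift outside the region where self-similarity holds. Carrying out this interaction between continuum-valued local geometry and uniform hyperbolic contraction is where all the hypotheses of the theorem must conspire.
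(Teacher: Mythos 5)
Your two preliminary reductions (a self-similar hyperbolic cw-metric, and continuity of joint holonomies implying pseudo-isometry) are exactly the paper's Theorems \ref{existenceself} and \ref{pseudocwN}, but the core of your density argument has a genuine gap. You propose to shadow a periodic pseudo-orbit and then run a Banach-style fixed point argument using ``a single-valued continuous selection from the bracket.'' No such selection is available, and none is claimed anywhere: by Definition \ref{holonomies} the holonomy value $\pi^s_{x,y}(z)=C^u_\eps(z)\cap C^s_\eps(y)$ is genuinely set-valued (Remark \ref{pseudo-anosov} — two points near the pseudo-Anosov singularities), and Definition \ref{cw-cont} only asserts the \emph{existence} of some admissible point $q^*$ with small $D$-sizes, not a canonical continuous choice. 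Moreover your contraction estimate $d(f(x),f(y))\leq\lambda^{-1}d(x,y)$ misreads the self-similar object: $D$ is not a metric on points but a function on continua with two marked points, so there is no complete metric space of points on which to run the contraction mapping theorem, and the multivaluedness is exactly why the paper warns that a periodic pseudo-orbit may be shadowed only by non-periodic points. The paper's actual argument (Theorem \ref{Katok}, following Katok's Main Lemma) avoids any fixed point theorem and any selection: starting from $y$ with $d(y,p),d(f^k(y),p)<\delta/2$ and $k\geq k_0$ large, it iterates the pseudo-isometric joint holonomies to build rectangles of stable/unstable continua whose $D$-sizes decay like $[(1+\delta)^2 4]^n\lambda^{nk}$ (summable by Lemma \ref{leminha}), producing points $y_n$ with $d(y_n,f^k(y_n))\to 0$; compactness and continuity of $f$ then force any accumulation point $q$ to satisfy $f^k(q)=q$, and the summed $D$-estimates keep $q$ within $\alpha$ of $p$. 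Uniqueness of shadowing is never needed, which is precisely how the set-valued brackets are circumvented.

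For transitivity your route is also not the paper's and is shakier: the paper's isometric holonomies are defined for the separate stable/unstable holonomies (not the joint ones), so ``isometric implies continuous joint, hence apply part one'' is not justified, and your claim that isometry forces distinct spectral pieces ``to collapse'' is not an argument. The paper instead proves (Proposition \ref{ar}) that a non-transitive cw-hyperbolic homeomorphism has an attractor class $A$ and a repeller class $R$, takes a heteroclinic-type point, and uses the isometric stable holonomies to transport a stable continuum of \emph{fixed} $D$-size along an unstable arc joining a neighborhood of $A$ to $f^{2n}(C^u_\eps(y))\ni f^{2n}(y)\in R$, landing a point simultaneously in $A$ and in a local stable set of a point of $R$, contradicting $A\cap R=\emptyset$. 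If you want to salvage your outline, replace the fixed-point step by the Katok-style iterative rectangle construction and replace the spectral-decomposition step by the attractor/repeller transport argument.
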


In the proof of this result, we prove that the cw-metric can be adapted to be self-similar (see Theorem \ref{existenceself}) and that, in this case, continuity of stable/unstable holonomies imply that they are pseudo-isometric (see Theorem \ref{pseudocwN}). This step is based in \cite{Ar} where similar conclusions were obtained for topologically hyperbolic homeomorphisms using the hyperbolic metric of Fathi. The hypothesis of joint continuity will be proved in the case of cw$_F$-hyperbolic homeomorphisms of surfaces assuming regularity of all bi-asymptotic sectors (see Definition \ref{cwF} and Figure \ref{fig:setorantigo}). This is based in \cite{ACS} where the structure of local stable/unstable continua and bi-asymptotic sectors of cw-hyperbolic homeomorphisms is explored. The following is proved:

\begin{theorem}\label{cwfsurface}
If a cw$_F$-hyperbolic surface homeomorphism has only regular bi-asymptotic sectors, then it has continuous joint stable/unstable holonomies and, hence, its set of periodic points is dense in its non-wandering set.
\end{theorem}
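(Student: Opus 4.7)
The plan is to reduce the theorem to Theorem~\ref{Theorem A}: once continuous joint stable/unstable holonomies are established, the density of periodic points in the non-wandering set is immediate. All the work therefore lies in deriving joint continuity of the holonomies from the cw$_F$-hyperbolicity hypothesis together with the regularity of every bi-asymptotic sector.

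First I would set up the surface-dynamical picture following \cite{ACS}: in the cw$_F$-setting on a surface, the local stable and unstable continua $C^s_\epsilon(x)$ and $C^u_\epsilon(y)$ admit an arc-like description in which their intersections can be analyzed with planar topology tools. A joint holonomy takes a point $z \in C^u_\epsilon(x)$ and assigns to it the (possibly multi-valued) set $C^s_\epsilon(z) \cap C^u_\epsilon(y)$ whenever $d(x,y)$ is small enough for the cw-local-product-structure to apply. The joint-continuity condition of Definition~\ref{cw-cont} can then be reformulated as Hausdorff continuity of this multi-valued assignment jointly in the basepoints $x,y$ and the argument $z$.

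Next I would argue by contradiction. A failure of joint continuity would produce a sequence of pairs $(z_n,y_n)\to (z,y)$ whose holonomy-image sets fail to Hausdorff-converge to the image at $(z,y)$. Passing to a subsequence and using compactness, one obtains a limit point $w_\infty \in C^s_\epsilon(z)\cap C^u_\epsilon(y)$ that is separated from the expected image by a non-degenerate region $R$. The boundary of $R$ is traced out by pieces of local stable and unstable continua meeting at points which are simultaneously on a stable and an unstable continuum, hence are bi-asymptotic. By the surface topology combined with the cw$_F$-structure, $R$ can be shown to be (or to contain) a bi-asymptotic sector in the sense of Definition~\ref{cwF}. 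The key claim is then that such an obstructing sector cannot be regular: regularity prescribes precisely the tame local boundary behavior depicted in Figure~\ref{fig:setorantigo}, which is exactly what prevents holonomy images from jumping across the sector. A contradiction with the hypothesis that every bi-asymptotic sector is regular then yields joint continuity, and Theorem~\ref{Theorem A} finishes the proof.

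The main obstacle I expect is in implementing this last step: turning an abstract failure of joint continuity into a genuinely irregular sector requires a delicate combination of planar Schoenflies/Jordan-curve reasoning, the arc-like structure of local continua granted by cw$_F$-hyperbolicity, and the combinatorial content of regularity. One must in particular rule out that the limit region $R$ is an artifact of several small sectors accumulating at the discontinuity, so that the irregularity is genuinely visible at a single accumulation configuration rather than only in the limit. Managing this bookkeeping, rather than the planar topology itself, is where the delicate work of the proof will sit.
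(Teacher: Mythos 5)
Your overall skeleton (reduce to Theorem~\ref{Theorem A}, argue by contradiction, use compactness to extract a limit configuration, and identify a bi-asymptotic sector at the discontinuity) matches the paper's strategy, but the step you label as the ``key claim'' is where the argument breaks: you assert that the obstructing sector \emph{cannot be regular}, so that the hypothesis is contradicted directly. That is not what happens, and as stated it is circular. In the paper's proof the limit sector produced by the failing sequences \emph{is} regular (by hypothesis), and the contradiction comes from using regularity \emph{positively}, via two pieces of machinery you do not supply: (i) Lemma~\ref{continuitysector}, which proves joint continuity of the holonomies \emph{inside} a regular sector by parametrizing the sector with the stable/unstable boundary arcs and the spine, using the total ordering and monotonicity results of \cite{ACS} — this is exactly the content of your sentence ``regularity \ldots is exactly what prevents holonomy images from jumping across the sector,'' which you assume rather than prove; and (ii) Proposition~\ref{setormaior}, which shows that when \emph{all} sectors are regular, every sector sits in the interior of a strictly larger one. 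Point (ii) is not bookkeeping: the limit point $p$ of your sequences may lie on the boundary of the limit sector $D$, so Lemma~\ref{continuitysector} cannot be applied in $D$ itself; one must fatten $D$ to a sector $D'$ containing it in its interior so that the approximating continua eventually lie in $\inte(D')$, and the proof of that fattening again uses regularity of \emph{all} sectors (to rule out spines at the corner intersections). Your closing paragraph gestures at a related difficulty (accumulation of small sectors) but does not identify this boundary/nesting issue or how regularity resolves it.

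A second gap: your dichotomy assumes the failure of continuity always produces a genuine two-dimensional region bounded by both a stable and an unstable piece. In the paper's Case~1, only one of the two quantities in Definition~\ref{cw-cont} stays bounded away from zero while the other collapses; then the limiting object is not a bi-asymptotic sector at all but a non-trivial closed curve contained in a single local stable (or unstable) continuum, and the contradiction is with cw-expansiveness (a stable set with non-empty interior), not with regularity of sectors. Your proposal would need this degenerate case handled separately, and also needs the cw$_F$ hypothesis explicitly to know the limiting stable and unstable continua meet in finitely many points, so that a genuine sector (rather than an infinite intersection pattern) is produced in the non-degenerate case. Without Lemma~\ref{continuitysector}, Proposition~\ref{setormaior}, and the degenerate case, the proposal is a plausible outline but not a proof.
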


The hypothesis of regularity of all bi-asymptotic sectors will be proved for all cw$_F$-hyperbolic homeomorphisms admiting only a finite number of spines (see Proposition \ref{allregular}). The paper is organized as follows: in Section 2 we define the local stable/unstable holonomies for cw-hyperbolic homeomorphisms and prove the density of periodic points in the non-wandering set assuming they are pseudo-isometric; in Section 3 we construct a self-similar hyperbolic cw-metric and prove that joint continuity of local stable/unstable holonomies imply they are jointly pseudo-isometric using the self-similar cw-metric; in Section 4 we restrict to the study of cw-hyperbolicity on surfaces and prove the hypothesis of joint continuity using cw$_F$-hyperbolicity and regularity on bi-asymptotic sectors; and in Section 5 we prove transitivity of cw-hyperbolic homeomorphisms assuming isometric local stable/unstable holonomies.

\vspace{+0.6cm}

\section{Cw-hyperbolic stable/unstable holonomies 
and density of periodic points}

In this section, we will prove the density of periodic points in the non-wandering set of a $cw$-hyperbolic homeomorphism $f\colon X\to X$, assuming first that joint stable/unstable holonomies are pseudo-isometric. We need to define properly the local stable/unstable holonomies for cw-hyperbolic homeomorphisms, which can be quite different from the case of topologically hyperbolic homeomorphisms in \cite{Ar}. Since local stable/unstable continua can intersect in more than one point in the cw-hyperbolic scenario, the image of local stable/unstable holonomy maps will be sets in $\mathcal{K}(X)$, the space of compact subsets of $X$ endowed with the Hausdorff metric $d^H$. 

\begin{definition}[Local stable and local unstable holonomies]\label{holonomies}
Let $c>0$ be a cw-expansive constant of $f$, $\eps=\frac{c}{2}$, and choose $\delta'\in(0,\eps)$, given by the $cw$-local-product-structure, such that 
$$C^u_\epsilon(x)\cap C^s_\epsilon(y)\neq\emptyset \,\,\,\,\,\, \text{whenever} \,\,\,\,\,\, d(x,y)<\delta'.$$ 
Thus, if $\delta=\frac{\delta'}{2}$, then

for each pair $(x,y)\in X\times X$, with $d(x,y)<\delta$, we can define $\pi^s_{x,y}\colon C^s_{\delta}(x)\to \mathcal{K}(C^s_{\eps}(y))$ by 
$$\pi^s_{x,y}(z)=C_{\eps}^u(z)\cap C^s_{\eps}(y).$$
Note that $\pi^s_{x,y}(z)\neq\emptyset$ since $z\in C^s_{\delta}(x)$ implies 
$$d(z,y)\leq d(z,x)+d(x,y)\leq \delta+\delta=\delta',$$ which, in turn, implies
$$C^u_\epsilon(z)\cap C^s_\epsilon(y)\neq\emptyset.$$ The unstable holonomy map $\pi^u_{x,y}\colon C^u_{\delta}(x)\to \mathcal{K}(C^u_{\eps}(y))$ is defined similarly by
$$\pi^u_{x,y}(z)=C_{\eps}^s(z)\cap C^u_{\eps}(y).$$

\end{definition}

\begin{remark}\label{pseudo-anosov}
We note that on the example of the pseudo-Anosov diffeomorphism of the sphere $\mathbb{S}^2$, which is the main example explored in \cite{ACCV3} of a cw-hyperbolic homeomorphism that is not topologically hyperbolic, the sets $\pi^s_{x,y}(z)$ and $\pi^u_{x,y}(z)$ consist of two distinct points if $x$ and $y$ are sufficiently close to the singularities of the stable/unstable foliations (see Figure \ref{setor}).
\end{remark}

To define pseudo-isometric local stable/unstable holonomies, we need to recall the cw-metric introduced in \cite{ACCV3}. Let $\mathcal{C}(X)$ denote the space of all subcontinua of $X$ and define
$$E=\{(p,q,C): C\in \C(X),\, p,q\in C\}.$$
For $p,q\in C$ denote $C_{(p,q)}=(p,q,C)$.
The notation $C_{(p,q)}$ implies that $p,q\in C$ and that $C\in\C(X)$.
Define
$$f(C_{(p,q)})=f(C)_{(f(p),f(q))}.$$
and consider the sets
\[
\C^s_\eps(X)=\{C\in\C(X);\diam(f^n(C))\leq\eps\, \text{ for every }\, n\geq 0\} \,\,\,\,\,\, \text{and}
\]
\[
\C^u_\eps(X)=\{C\in\C(X);\diam(f^{-n}(C))\leq\eps\, \text{ for every }\, n\geq 0\},
\]
where $\diam(A)$ denotes the diameter of the set $A$. These sets contain exactly the $\eps$-stable and $\eps$-unstable continua of $f$, respectively.

\begin{theorem}[Hyperbolic $cw$-metric-\cite{ACCV3}]
\label{teoCwHyp}
If $f\colon X\to X$ is a cw-expansive homeomorphism of a compact metric space $X$, then there is a function $D\colon E\to\R$ satisfying the following conditions.
\begin{enumerate}
\item Metric properties:
\vspace{+0.2cm}
\begin{enumerate}
 \item $D(C_{(p,q)})\geq 0$ with equality if, and only if, $C$ is a singleton,\vspace{+0.1cm}
 \item $D(C_{(p,q)})=D(C_{(q,p)})$,\vspace{+0.1cm}
 \item $D([A\cup B]_{(a,c)})\leq D(A_{(a,b)})+D(B_{(b,c)})$, $a\in A, b\in A\cap B, c\in B$.
\end{enumerate}
\vspace{+0.2cm}
\item Hyperbolicity: there exist constants $\lambda\in(0,1)$ and $\varepsilon>0$ satisfying
\vspace{+0.2cm}
	\begin{enumerate}
	\item if $C\in\C^s_\eps(X)$ then $D(f^n(C_{(p,q)}))\leq 4\lambda^nD(C_{(p,q)})$ for every $n\geq 0$,\vspace{+0.1cm}
	\item if $C\in\C^u_\eps(X)$ then $D(f^{-n}(C_{(p,q)}))\leq 4\lambda^nD(C_{(p,q)})$ for every $n\geq 0$.
	\end{enumerate}
\vspace{+0.2cm}
\item Compatibility: for each $\delta>0$ there is $\gamma>0$ such that
\vspace{+0.2cm}
\begin{enumerate}
\item if $\diam(C)<\gamma$, then $D(C_{(p,q)})<\delta$\,\, for every $p,q\in C$,\vspace{+0.1cm}
\item if there exist $p,q\in C$ such that $D(C_{(p,q)})<\gamma$, then $\diam(C)<\delta$.
\end{enumerate}
\end{enumerate}
\end{theorem}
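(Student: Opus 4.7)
The plan is to mimic Fathi's construction of a hyperbolic metric for expansive homeomorphisms \cite{Fa}, replacing pairs of points by pairs of endpoints within a subcontinuum. Fix a cw-expansive constant $c>0$ of $f$ and choose $\eps>0$ slightly smaller than $c$. For $C\in\C(X)$ define the expansivity time
$$n(C)=\sup\{N\geq 0:\diam(f^k(C))\leq\eps\text{ for every }|k|\leq N\},$$
with $n(\{x\})=+\infty$. Cw-expansiveness forces $n(C)<\infty$ for every non-degenerate $C$: otherwise $C\subset W^s_\eps(p)\cap W^u_\eps(p)$ would be a connected subset of more than one point, contradicting the total disconnectedness of the cross-intersection at scale $c$.

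Given $\la\in(0,1)$ to be tuned, set $D_0(C)=\la^{n(C)}$ (so $D_0(\{x\})=0$). If $C\in\C^s_\eps(X)$ and $j\geq 0$, the symmetric window $[-N,N]$ merely shifts forward, so $n(f^j(C))\geq n(C)+j$ and hence $D_0(f^j(C))\leq\la^j D_0(C)$; analogously on $\C^u_\eps(X)$ under backward iteration. To recover the concatenation inequality (1c), I define
$$D(C_{(p,q)})=\inf\left\{\sum_{i=1}^{k} D_0(C_i):C_1,\dots,C_k\in\C(X),\ \bigcup_i C_i\subset C,\ p\in C_1,\ q\in C_k,\ C_i\cap C_{i+1}\neq\emp\right\},$$
the infimum being taken over finite chains of subcontinua inside $C$ joining $p$ to $q$. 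Properties (1b) and (1c) are immediate from the definition; hyperbolicity (2) follows by pushing an optimal chain $\{C_i\}$ forward to obtain a chain $\{f^n(C_i)\}$ for $f^n(C)$ and applying the pointwise contraction of $D_0$, with the factor $4$ absorbing a finite distortion from adjusting endpoints and from covering the chain at scale $\eps$. Compatibility (3) rests on a Lebesgue-number style modulus $\omega$ with $\diam(C)<\omega(N)\Rightarrow n(C)\geq N$: (3a) then follows using the trivial one-link chain $\{C\}$, and (3b) follows because any chain of small total weight consists of subcontinua with large $n$, hence small diameter by the converse modulus, which sums to small $d(p,q)$.

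The main obstacle I anticipate is the strict positivity (1a) of $D$ on non-degenerate triples. A priori the infimum could collapse through chains with many short pieces each of only moderate $n(C_i)$, and this must be excluded uniformly in the number of pieces. The argument should combine cw-expansiveness with a careful calibration of $\la$ close to $1$, so that the geometric decay of $D_0$ cannot compensate for the combinatorial growth of the number of pieces needed to cover a continuum of given diameter; this is the step that really fuses expansiveness with the metric structure, and it is where most of the technical work and the precise joint choice of $\eps$ and $\la$ take place.
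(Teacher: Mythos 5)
Your construction follows the same Fathi-type route as the paper's source \cite{ACCV3} (whose construction is recalled in Section 3 of this paper), but as written it has two genuine defects. First, your chain infimum only requires $\bigcup_i C_i\subset C$, whereas the definition in \cite{ACCV3} (see the displayed formula for $P$ in Section 3) requires the chain to \emph{cover} $C$, i.e.\ $C=\bigcup_i A_i$. With containment only, $D(C_{(p,q)})$ measures merely the separation of $p$ and $q$ inside $C$ and is blind to the size of $C$: taking $p=q$ and the one-piece chain consisting of the singleton $\{p\}$ gives $D(C_{(p,p)})=0$ for every nondegenerate $C$, so the ``only if'' half of (1a) fails, and compatibility (3b) fails as well, since one can always exhibit $p,q\in C$ with $D(C_{(p,q)})<\gamma$ while $\diam(C)$ stays large. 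The covering condition is not cosmetic; it is what makes $D$ a function of the marked continuum rather than of the pair of points.

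Second, and more seriously, you explicitly defer the step that \emph{is} the proof: the uniform two-sided comparison between $\rho(C)=\lambda^{n(C)}$ and the chain infimum, namely $D(C_{(p,q)})\le\rho(C)\le 4\,D(C_{(p,q)})$. This is not obtained by pushing $\lambda$ close to $1$; in \cite{ACCV3} (following Fathi and Kato) the calibration is $\lambda=2^{-1/m}$, where $m$ is Kato's uniform time such that $\diam(C)>c/2$ forces $\max_{|n|\le m}\diam(f^n(C))>c$. That choice yields the quasi-subadditivity $N(A\cup B)\ge\min\{N(A),N(B)\}-m$ for intersecting continua, hence $\rho(A\cup B)\le 2\max\{\rho(A),\rho(B)\}$, and a Frink/metrization-lemma induction over chains then produces the factor $4$. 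This single inequality is what simultaneously gives positivity (1a), the constant $4$ in hyperbolicity (2) (via $D(f^n(C_{(p,q)}))\le\rho(f^n(C))\le\lambda^n\rho(C)\le 4\lambda^n D(C_{(p,q)})$, not by ``absorbing a finite distortion''), and compatibility (3b). Since your proposal only sketches (1b), (1c) and the easy halves and leaves this comparison as an acknowledged open step, the core of the theorem remains unproved; note also that the present paper does not reprove it but cites \cite{ACCV3}.
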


The hyperbolic cw-metric was introduced to obtain hyperbolic contract rates on local stable/unstable continua for cw-expansive homeomorphisms. It was inspired by the hyperbolic metric of Fathi \cite{Fa} in the case of expansive homeomorphisms. One important difference is that in the cw-expansive scenario we do not have a metric on the space, but a function on the space of continua with two marked points $E$. The cw-metric is the tool used to obtain some of the hyperbolic properties for cw-hyperbolic homeomorphisms in \cite{ACCV3} and we will use it in this paper to discuss regularity properties of local stable/unstable holonomies.
Let $\eps$ and $\delta$ be as in the definition of local stable and local unstable holonomies $$\pi^s_{x,y}\colon C^s_{\delta}(x)\to \mathcal{K}(C^s_{\eps}(y)) \,\,\,\,\,\, \text{and} \,\,\,\,\,\, \pi^u_{x,y}\colon C^u_{\delta}(x)\to \mathcal{K}(C^u_{\eps}(y)).$$

\begin{definition}[Pseudo-isometric local stable and local unstable holonomies]
We say that $\pi^s_{x,y}$ is a \emph{pseudo-isometric local stable holonomy} if for each $\eta>0$ there is $\ga>0$ satisfying:
 if $C$ is a subcontinuum of $C^s_{\delta}(x)$, $p,q\in C$, $D(C_{(p,q)})\leq \ga$, and $q^*\in \pi^s_{x,y}(q)$, then there exists a subcontinuum $C^*$ of $C^s_{\eps}(y)$ containing $q^*$ and there exists $p^*\in \pi^s_{x,y}(p)\cap C^*$ such that 
 \begin{equation*}
 \left|\frac{D(C^*_{(p^*,q^*)})}{D(C_{(p,q)})}-1\right|\leq \eta.
 \end{equation*}
Analogously, we can define what means $\pi^u_{x,y}$ to be a \emph{pseudo-isometric local unstable holonomy}. 
\end{definition}

In the proof of Theorem \ref{Theorem A}, we will need to ensure the existence of rectangles of stable and unstable continua with arbitrarily small sides, but the local stable and local unstable holonomies as defined above are not enough for that. Indeed, if $D(C_{(p,q)})$ is very small then the pseudo-isometric $\pi^s_{x,y}$ ensures that $$D(C^*_{(p^*,q^*)})\leq(1+\eta)D(C_{(p,q)})$$ and, hence, $D(C^*_{(p^*,q^*)})$ is also very small; however, $\pi^s_{x,y}$ intersects $\eps$-unstable continua of points in $C$ with $C^*$, and $\eps$ can be much bigger than the numbers $D(C_{(p,q)})$ and $D(C^*_{(p^*,q^*)})$. Thus, the rectangle formed by the $\eps$-stable continua $C$ and $C^*$ and the $\eps$-unstable continua $C^u_{\eps}(p)$ and $C^u_{\eps}(q)$ has two very small sides but can have two big sides (when compared to the others). Even in the case where $p$ and $p^*$ are very close inside $C^u_{\eps}(p)$, that is, there is continua $C'\subset C^u_{\eps}(p)$ containing $p$ and $p^*$ with very small $D(C'_{(p,p^*)})$, the local stable holonomy $\pi^s_{x,y}$ does not ensure that the fourth side of the rectangle is small (see Figure \ref{Pseudo-iso}).

\begin{figure}[h]
    \centering
    \includegraphics[scale=0.6]{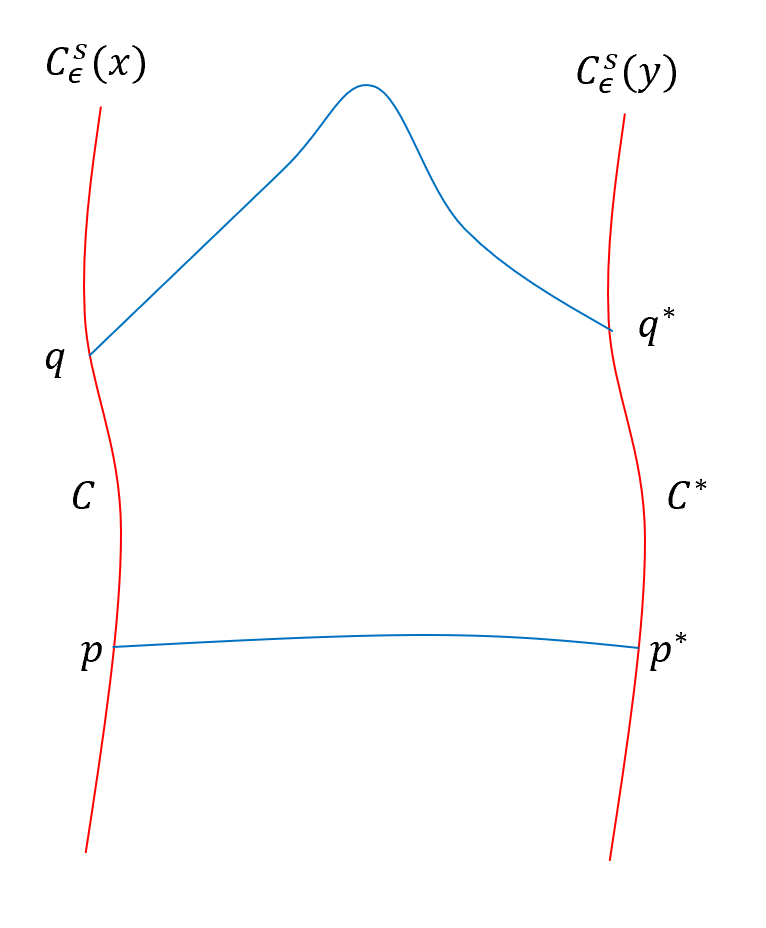}
    \caption{Pseudo-Isometric Holonomies}
    \label{Pseudo-iso}
\end{figure}

This cannot be fixed using a local unstable holonomy since it could leave a different side of the rectangle big. We remark that this does not happen in the case of expansive homeomorphisms, since the intersections between the small and the big sides of these rectangles would be two distinct points on the same dynamical ball, but it is something important to observe in the case of cw-hyperbolic homeomorphisms. This motivates the following definition (see Figure \ref{jointly pseudo-iso}):

\begin{definition}[Pseudo-isometric joint stable/unstable holonomies]\label{pseudo}
We say that $f$ has \emph{pseudo-isometric joint stable/unstable holonomies} if for each $\eta>0$ there exists $\ga>0$ satisfying:
if $d(x,y)<\delta$, $C$ is a subcontinuum of $C^s_{\eps}(x)$, $p,q\in C$, $D(C_{(p,q)})\leq \ga$, $p^*\in \pi^s_{x,y}(p)$, $C'$ is a subcontinuum of $C^u_{\eps}(q)$ containing $p$ and $p^*$, and $D(C'_{(p,p^*)})\leq\ga$, then there exist subcontinua $C^*$ of $C^s_{\eps}(y)$ containing $p^*$ and $C^{**}$ of $C^u_{\eps}(q)$ containing $q$, and there exists $q^*\in C^{*}\cap C^{**}$ such that 
$$\left|\frac{D(C^*_{(p^*,q^*)})}{D(C_{(p,q)})}-1\right|\leq \eta \,\,\,\,\,\, \text{and} \,\,\,\,\,\, \left|\frac{D(C^{**}_{(p,p^*)})}{D(C'_{(q,q^*)})}-1\right|\leq \eta.$$
\end{definition}

\begin{figure}[h]
    \centering
    \includegraphics[scale=0.6]{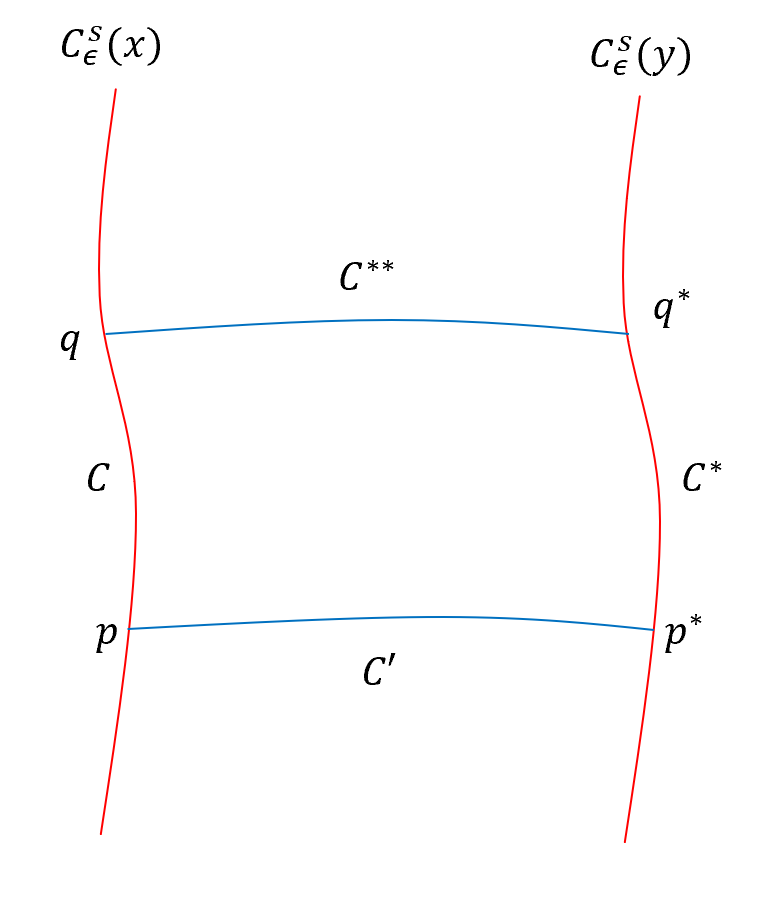}
    \caption{Jointly pseudo-isometric Holonomies}
    \label{jointly pseudo-iso}
\end{figure}

\begin{remark}\label{isom}
Note that if $f$ has pseudo-isometric local stable/unstable holonomies, then, in particular, every local stable holonomy $\pi^s_{x,y}$ is pseudo-isometric and every local unstable holonomy $\pi^u_{x,y}$ is pseudo-isometric.
\end{remark}

Now we state and prove the main result of this section. Recall that a periodic point $p\in X$ satisfies $f^k(p)=p$ for some $k\in\N$, and that $p$ is non-wandering if for each neighborhood $U$ of $p$ there exists $k\in\N$ such that $f^k(U)\cap U\neq\emptyset$. The set of periodic points of $f$ will be denoted by $Per(f)$ and the set of non-wandering points will be denoted by $\Omega(f)$.

\begin{theorem}\label{Katok}
If a cw-hyperbolic homeomorphism has pseudo isometric joint stable/unstable holonomies, then its set of periodic points is dense in its non-wandering set.
\end{theorem}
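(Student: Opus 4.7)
The plan is to adapt the Anosov--Katok closing argument to the cw-hyperbolic setting, using the hyperbolic cw-metric $D$ from Theorem \ref{teoCwHyp} to quantify contraction and using the pseudo-isometric joint stable/unstable holonomies from Definition \ref{pseudo} to produce consistent selections from the set-valued intersections of local stable and local unstable continua. Fix $p\in\Om(f)$ and a target radius $\al>0$. First I would choose $\eta\in(0,\tfrac{1}{4})$ and extract the associated $\ga>0$ from Definition \ref{pseudo}; next pick $n\in\N$ so large that $4\la^n(1+\eta)<1$, so that one $f^n$-step followed by one joint-holonomy projection strictly contracts $D$-lengths; finally, using the compatibility clause of Theorem \ref{teoCwHyp} together with the cw-local-product-structure, fix a scale $\de>0$ on which the initial rectangles have $D$-sides of size at most $\ga$ and $d$-diameters at most $\al$.

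Since $p$ is non-wandering, there exist $y\in X$ and $m\geq n$ with $d(y,p)<\de$ and $d(f^m(y),p)<\de$; in particular $d(f^m(y),y)<2\de$. The cw-local-product-structure then yields a corner $z_1\in C^s_\eps(f^m(y))\cap C^u_\eps(y)$, which plays the role of the Anosov bracket $[f^m(y),y]$. Set $z_0=y$ and inductively define a sequence $\{z_k\}$ by selecting $z_{k+1}\in C^s_\eps(f^m(z_k))\cap C^u_\eps(z_k)$. By construction every $z_k$ sits on a common local unstable continuum through $y$, and the $D$-distance from $z_k$ to $z_{k+1}$ along this continuum measures the stable displacement between $z_k$ and $f^m(z_k)$. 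Item (2) of Theorem \ref{teoCwHyp} guarantees that this stable displacement, measured along the stable continuum between $z_k$ and $f^m(z_k)$ and pushed back by one application of $f^m$, contracts by at least the factor $4\la^m$. The pseudo-isometric joint holonomies control the distortion introduced when the two sides of the rectangle bounded by $C^s_\eps(f^m(z_{k-1}))$ and $C^s_\eps(f^m(z_k))$ are compared via the transport from a neighbourhood of $f^m(y)$ back to a neighbourhood of $y$: their $D$-lengths differ by a multiplicative factor in $[1-\eta,1+\eta]$, so the composed map on the unstable coordinate strictly contracts $D$-lengths by the ratio $4\la^m(1+\eta)<1$.

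This contraction estimate shows that $\{z_k\}$ is $D$-Cauchy along the common unstable continuum through $y$, which by the compatibility clause of Theorem \ref{teoCwHyp} translates into a $d$-Cauchy sequence converging to some $q\in X$. By construction $q$ lies in the closure of this unstable continuum, hence within $\al$ of $p$, and the selection scheme forces $f^m(q)=q$; thus $q\in\Per(f)$ lies within $\al$ of $p$, proving density. The main obstacle is the set-valued nature of the cw-hyperbolic bracket: at each step the intersection $C^s_\eps(f^m(z_k))\cap C^u_\eps(z_k)$ is a compact set rather than a single point, so in order to obtain a genuine Cauchy sequence I must make selections across all iterates that nest consistently. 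Definition \ref{pseudo} is precisely the tool for this: it supplies a matching fourth corner of a rectangle whose opposite $D$-sides differ by at most the factor $1\pm\eta$. Making the bookkeeping precise---verifying that the domain condition $d(x,y)<\de$ of the holonomy maps is preserved along the iteration, that the selections assemble into a single converging sequence, and that the limit is honestly $f^m$-fixed rather than merely a shadow of approximate fixed points---is the delicate heart of the argument.
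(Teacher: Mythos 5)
Your high-level architecture (Katok-style closing argument, contraction measured in the cw-metric $D$, joint pseudo-isometric holonomies to make consistent selections from the set-valued brackets) is the same as the paper's, but the inductive step you actually write down does not work, and it is precisely at the point you yourself flag as ``the delicate heart''. You define $z_{k+1}\in C^s_\eps(f^m(z_k))\cap C^u_\eps(z_k)$ and claim the $f^m$-displacement contracts. Already at $k=1$ this breaks: since $z_1\in C^s_\eps(f^m(y))$, the point $f^m(z_1)$ is $\eps$-close to $f^{2m}(y)$, and nothing whatsoever is known about $f^{2m}(y)$ --- the non-wandering hypothesis only gives an orbit segment of $y$ that returns near $p$ at time $m$, not at time $2m$. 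Hence $d(z_1,f^m(z_1))$ need not be below the threshold of the cw-local-product-structure, so the next bracket may be empty, and in any case the displacement $d(z_k,f^m(z_k))$ has no reason to shrink; your contraction factor $4\la^m(1+\eta)$ never enters because no quantity in your scheme is being pushed forward and then transported back. Relatedly, the assertion that ``the selection scheme forces $f^m(q)=q$'' is unjustified as stated: a limit of such brackets would at best satisfy $q\in C^s_\eps(f^m(q))$, which is far from periodicity; what one genuinely needs is $d(z_k,f^m(z_k))\to 0$, and your construction does not produce that.

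The paper's construction repairs exactly this. After taking the first bracket $z\in C^u_\eps(f^m(y))\cap C^s_\eps(y)$, one pushes the stable arc from $y$ to $z$ forward by $f^m$ (so its $D$-size is $\leq 4\la^m c$), applies the stable holonomy $\pi^s_{f^m(y),y}$ to $f^m(z)$ to land back on $C^s_\eps(y)$ at a point $y_1'$, and defines $y_1=f^{-m}(y_1')$; then both $y_1$ and $f^m(y_1)=y_1'$ lie on explicitly small stable/unstable continua near $z$, giving $d(y_1,f^m(y_1))$ of order $(2+\eta)4\la^m c$. At all subsequent steps the \emph{joint} pseudo-isometry of Definition \ref{pseudo} (not just the separate stable and unstable pseudo-isometries) is indispensable to keep all four sides of the resulting rectangles small --- this is the phenomenon discussed before Definition \ref{pseudo}, and it is invisible in your sketch. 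The bounds $[(1+\eta)^2 4\la^m]^n c$ then give $d(y_n,f^m(y_n))\to 0$ geometrically, an accumulation point $q$ of $(y_n)$ is honestly $f^m$-periodic by continuity, and $d(q,p)<\al$ follows by summing the $D$-sizes of the whole chain of continua (this is where the summability of the geometric series, Lemma \ref{leminha} in the paper, and the compatibility of $D$ with the diameter are used), rather than from your unproved claim that all $z_k$ lie on a single local unstable continuum through $y$. So the statement is correct and your plan points in the right direction, but the concrete iteration must be replaced by the push-forward/holonomy/pull-back step above for the proof to go through.
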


The proof is based in a paper of Katok \cite{K} which proves that positive topological entropy for a $C^{1+\alpha}$ diffeomorphism of a compact two-dimensional manifold implies the existence of hyperbolic horseshoes. An important step of the proof (see the \emph{Main Lemma} in Section 3 of \cite{K}) is the approximation of regular recurrent points by periodic points. The proof we exhibit for Theorem \ref{Katok} is inspired in the proof of this Main Lemma of Katok. Before proving this result, we prove the following elementary calculus lemma that will be used in the proof.

\begin{lemma}\label{leminha}
If $a>1$ and $0<b<1$, then for each $\eps>0$ there exists $k\in\N$ satisfying:
$$\sum_{n=0}^{\infty} a^nb^{kn}\leq\eps.$$
\end{lemma}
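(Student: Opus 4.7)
The strategy is to recognize that $\sum_{n=0}^{\infty} a^n b^{kn} = \sum_{n=0}^{\infty} (ab^k)^n$ is a geometric series in the common ratio $r_k := ab^k$. Since $b \in (0,1)$, the quantity $b^k$ decays to $0$ as $k \to \infty$ regardless of the fixed constant $a > 1$, so $r_k$ itself can be driven as close to $0$ as we like by choosing $k$ large.

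First I would choose $k_0 \in \N$ so that $ab^k < 1$ for all $k \geq k_0$, which is immediate from $b^k \to 0$. For such $k$, the series converges absolutely and equals
\[
\sum_{n=0}^{\infty}(ab^k)^n = \frac{1}{1-ab^k}.
\]
Strictly speaking the $n=0$ term already contributes $1$, so for $\eps < 1$ the stated inequality $\leq \eps$ can only be read as having the sum indexed from $n = 1$ (which is clearly the way the lemma is used later, since only the non-constant tail $\sum_{n\geq 1}(ab^k)^n$ captures the accumulated error one wishes to dominate). Under that reading the sum equals $\dfrac{ab^k}{1-ab^k}$.

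Next, since $\lim_{k\to\infty}\dfrac{ab^k}{1-ab^k}=0$, it suffices to pick $k$ so large that $ab^k\leq\dfrac{\eps}{1+\eps}$; solving the resulting inequality yields $\dfrac{ab^k}{1-ab^k}\leq\eps$, as required. There is no substantial obstacle: this is a short calculus fact whose only content is that an exponential contraction of rate $b^k$ eventually beats an exponential expansion of rate $a^n$ once $k$ is taken large compared to the ratio $\log a / \log(1/b)$. The lemma will presumably be applied in the proof of Theorem~\ref{Katok} to select a return time $k$ for which the stable-direction contraction overpowers the unstable-direction expansion in the pseudo-isometric estimate for joint holonomies.
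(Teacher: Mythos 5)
Your argument is correct, and it takes a genuinely different (and in fact cleaner) route than the paper. The paper does not sum the geometric series in closed form: it bounds the series by the improper integral $\int_0^{\infty}a^{x}b^{kx}\,dx$, evaluates this as $-\frac{1}{\log a+k\log b}$, and then chooses $k$ so that $\log a+k\log b<0$ and this quantity is at most $\eps$. Your computation $\sum_{n\geq 0}(ab^{k})^{n}=\frac{1}{1-ab^{k}}$ for $ab^{k}<1$ replaces that calculus step and produces an explicit threshold for $k$ (namely $ab^{k}\leq\frac{\eps}{1+\eps}$ for the tail). Moreover, your remark about the $n=0$ term is not pedantry but an actual repair of the statement and of the paper's own proof: since the $n=0$ term equals $1$, the sum indexed from $0$ is always $\geq 1$, so the inequality as stated is unattainable for $\eps<1$; for the same reason the paper's comparison $\sum_{n=0}^{\infty}a^{n}b^{kn}\leq\int_0^{\infty}a^{x}b^{kx}\,dx$ cannot hold, because the integrand is decreasing and the integral tends to $0$ as $k\to\infty$ — the correct integral-test inequality is $\sum_{n\geq 1}a^{n}b^{kn}\leq\int_0^{\infty}a^{x}b^{kx}\,dx$. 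Your reading of the lemma as a statement about the tail $\sum_{n\geq 1}$ is exactly what is used in the proof of Theorem~\ref{Katok}: the convergence $D(F_{n(f^{k}(y_n),y_n)})\to 0$ only needs the single term $[(1+\delta)^{2}4]^{n}\lambda^{nk}\to 0$, and the final bound on $D(H_{(q,z)})$ is a sum starting at $n=1$. So both approaches establish the same (suitably re-indexed) fact; yours buys an explicit choice of $k$ and avoids the integral estimate, which in the paper's version is precisely where the index slip occurs.
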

\begin{proof}
Recall from elementary  calculus that we can estimate $$\sum_{n=0}^{\infty} a^nb^{kn}\leq \int_0^{\infty}a^xb^{kx}dx$$
and note that $a^xb^{kx}=e^{x\log(a)+kx\log(b)}$.
Thus, 
\begin{eqnarray*}
\int_0^{\infty}a^xb^{kx}dx&=&\int_0^{\infty}e^{x\log(a)+kx\log(b)}dx\\
&=&\lim_{m\to\infty}\left[\frac{e^{m(\log(a)+k\log(b))}}{\log(a)+k\log(b)}-\frac{1}{\log(a)+k\log(b)}\right].
\end{eqnarray*}
For each $\eps>0$, choose $k>0$ such that  
$$\log(a)+k\log(b)<0 \,\,\,\,\,\, \text{and} \,\,\,\,\,\, -\frac{1}{\log(a)+k\log(b)}\leq\eps.$$
This implies that
$$\lim_{m\to\infty}e^{m(\log(a)+k\log(b))}=0$$
and, hence,
\begin{eqnarray*}
\sum_{n=0}^{\infty} a^nb^{kn}\leq\int_0^{\infty}a^xb^{kx}dx= -\frac{1}{\log(a)+k\log(b)}\leq\eps.
\end{eqnarray*}
\end{proof}

\begin{proof}[Proof of Theorem \ref{Katok}] 
Let $f\colon X\to X$ be a $cw$-hyperbolic homeomorphism and $p$ be a non-wandering point of $f$. If $p$ is periodic, then it is obviously in the closure of the set of periodic points of $f$. Thus, we can assume that $p$ is not periodic. Note that, if there is a sequence $(y_k)_{k\in\N}\subset X$ and $n\in\N$ such that 
$$y_k\to p \,\,\,\,\,\, \text{and} \,\,\,\,\,\, f^n(y_k)\to p,$$ then the continuity of $f$ assures that $p=f^n(p)$, that is, $p$ is a periodic point. Since we are assuming that $p$ is not a periodic point, $p$ cannot be accumulated in this way. Thus, for each $\alpha>0$ and $n\in\N$ we can choose $y\in X$ and $k\in\N$ such that 
$$d(y,p)<\alpha, \,\,\,\,\,\, d(f^k(y),p)<\alpha \,\,\,\,\,\, \text{and} \,\,\,\,\,\, k\geq n.$$
For each $\alpha>0$, we will prove the existence of a periodic point $q\in X$ such that $d(q,p)<\alpha$. Let $c\in(0,\frac{\alpha}{2})$ be a cw-expansive constant of $f$ and choose $\eps\in(0,\frac{c}{2})$, given by the compatibility of $D$ and $\diam$, such that
$$D(C_{(x,y)})\leq c \,\,\,\,\,\, \text{whenever} \,\,\,\,\,\, \diam(C)\leq2\eps.$$
Choose $\delta'\in(0,\eps)$, given by the $cw$-local-product-structure, such that 
$$C^u_\epsilon(x)\cap C^s_\epsilon(y)\neq\emptyset \,\,\,\,\,\, \text{whenever} \,\,\,\,\,\, d(x,y)<\delta',$$ 
and let $\delta=\frac{\delta'}{2}$. Thus, the holonomy maps $$\pi^s_{x,y}\colon C^s_{\delta}(x)\to \mathcal{K}(C^s_{\eps}(y)) \,\,\,\,\,\, \text{and} \,\,\,\,\,\, \pi^u_{x,y}\colon C^u_{\delta}(x)\to \mathcal{K}(C^u_{\eps}(y))$$ are well defined when $d(x,y)<\delta$ and are pseudo-isometric (see Remark \ref{isom}). 

Now choose $\ga\in(0,\frac{\delta}{2})$, as in Definition \ref{pseudo} of pseudo-isometric joint stable/unstable holonomies for $\eta=\delta$, and choose $\beta\in(0,\frac{\ga}{3})$, given by the compatibility between $D$ and $\diam$, such that
$$\diam(C)\leq\ga \,\,\,\,\,\, \text{whenever} \,\,\,\,\,\, D(C_{(x,y)})\leq3\beta$$ for some $x,y\in C$. Let $k_0\in\N$ be such that $4\lambda^{k_0}c\leq\beta$ and 
$$\sum_{n=0}^{\infty}[(1+\delta)^24]^n\lambda^{nk_0}\leq\beta.$$
This last inequality is ensured by Lemma \ref{leminha}.
Choose $y\in X$ and $k\geq k_0$ such that 
$$d(y,p)<\frac{\delta}{2} \,\,\,\,\,\, \text{and} \,\,\,\,\,\, d(f^k(y),p)<\frac{\delta}{2}.$$
Thus, $d(y,f^k(y))<\delta$ and, hence, there exists
$$z\in C^u_{\eps}(f^k(y))\cap C^s_{\eps}(y).$$
The hyperbolicity of $D$ ensures that
\begin{eqnarray*}
D(f^k(C^s_{\eps}(y)_{(y,z)}))&\leq&4\lambda^kD(C^s_{\eps}(y)_{(y,z)})\\
&\leq&4\lambda^kc\\
&\leq&4\lambda^{k_0}c\\
&\leq&\beta
\end{eqnarray*}
and, hence, $$\diam(f^k(C^s_{\eps}(y)))\leq\delta \,\,\,\,\,\, \text{and} \,\,\,\,\,\, f^k(C^s_{\eps}(y))\subset C^s_{\delta}(f^k(y)).$$ 
Since $D(f^k(C^s_{\eps}(y)_{(y,z)}))\leq\ga$, the pseudo-isometric $\pi^s_{f^k(y),y}$ assures that for 
$$f^k(z),f^k(y)\in f^k(C^s_{\eps}(y)) \,\,\,\,\,\, \text{and} \,\,\,\,\,\, z\in\pi^s_{f^k(y),y}(f^k(y)),$$ there exist a subcontinuum $C$ of $C^s_{\eps}(y)$ containing $z$ and $$y_1'\in \pi^s_{f^k(y),y}(f^k(z))\cap C$$ such that \begin{equation*}
 \left|\frac{D(C_{(z,y_1')})}{D(f^k(C^s_{\eps}(y)_{(y,z)}))}-1\right|\leq \delta.
 \end{equation*}
This implies that
\begin{equation*}
 D(C_{(z,y_1')})\leq(1+\delta)D(f^k(C^s_{\eps}(y)_{(y,z)}))\leq(1+\delta)4\lambda^kc.
 \end{equation*}
Let $y_1=f^{-k}(y_1')$ and see $C$ and $y_1$ in Figure \ref{arg1}.

\begin{figure}[h]
    \centering
    \includegraphics[scale=0.6]{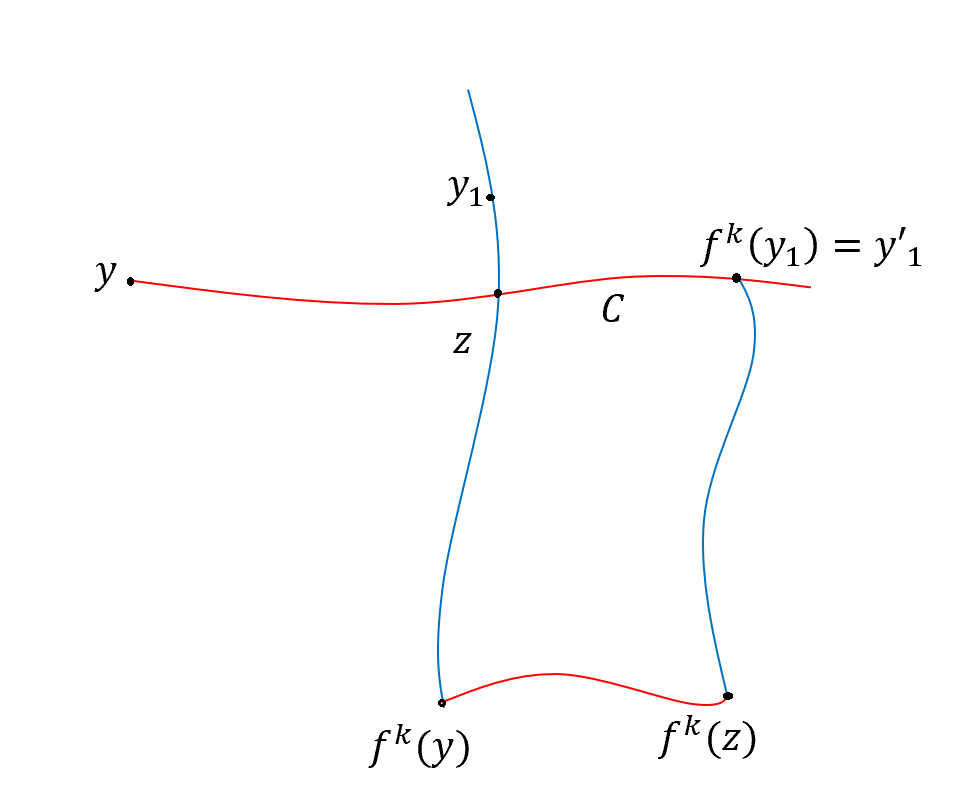}
    \caption{Construction of $y_1$}
    \label{arg1}
\end{figure}

The hyperbolicity of $D$ also ensures that
$$D(f^{-k}(C^u_{\eps}(f^k(z))_{(f^k(z),f^k(y_1))}))\leq4\lambda^kc\leq\beta$$ and, hence,
$$\diam(f^{-k}(C^u_{\eps}(f^k(z))))\leq\delta \,\,\,\,\,\, \text{and} \,\,\,\,\,\, f^{-k}(C^u_{\eps}(f^{k}(z)))\subset C^u_{\delta}(z).$$

Let $C'=f^{-k}(C^u_{\eps}(f^k(z)))$ and note that the set
$$F:=C\cup C'$$ is a continuum, since it is a union of two continua with a non-empty intersection:
$$z\in C\cap C'.$$
Now we use the metric properties and hyperbolicity of $D$ to obtain 
\begin{eqnarray*}
D(F_{(f^k(y_1),y_1)})&\leq& D(C_{(f^k(y_1),z)})+D(C'_{(z,y_1)})\\
&\leq& (1+\delta)4\lambda^kc+4\lambda^kc\\
&\leq& (2+\delta)4\lambda^kc.
\end{eqnarray*}
In particular, $$D(F_{(f^k(y_1),y_1)})\leq3\beta, \,\,\,\,\,\, \diam(F)\leq\ga \,\,\,\,\,\, \text{and} \,\,\,\,\,\, d(y_1,f^k(y_1))\leq\delta,$$ so we can repeat the argument above replacing $y$ and $f^k(y)$ by $y_1$ and $f^k(y_1)$. However, intersecting $C^s_{\eps}(y_1)$ and $C^u_{\eps}(f^k(y_1))$ will not be enough to prove the thesis of this theorem and we will need to use the pseudo-isometric joint stable/unstable holonomies, as in Definition \ref{pseudo}. 
For 
$$f^k(y_1),z\in C\subset C^s_{\delta}(z) \,\,\,\,\,\, \text{with} \,\,\,\,\,\, D(C_{(f^k(y_1),z)})\leq\ga$$ 
and 
$$y_1\in\pi^s_{z,y_1}(z)\cap C' \,\,\,\,\,\, \text{with} \,\,\,\,\,\, D(C'_{(z,y_1)})\leq\ga,$$ 
there exist subcontinua $C^*$ of $C^s_{\eps}(y_1)$ containing $y_1$ and $C^{**}$ of $C^u_{\eps}(f^k(y_1))$ containing $f^k(y_1)$, and there exists $z_1\in C^{*}\cap C^{**}$ such that 
$$\left|\frac{D(C^*_{(z_1,y_1)})}{D(C_{(f^k(y_1),z)})}-1\right|\leq \delta \,\,\,\,\,\, \text{and} \,\,\,\,\,\, \left|\frac{D(C^{**}_{(f^k(y_1),z_1)})}{D(C'_{(z,y_1)})}-1\right|\leq \delta.$$
This implies that $$D(C^{*}_{(y_1,y_1'')})\leq(1+\delta)^24\lambda^kc \,\,\,\,\,\, \text{and} \,\,\,\,\,\, D(C^{**}_{(f^k(y_1),z_1)})\leq(1+\delta)4\lambda^kc.$$ 
Repeating this argument with $y_1$ and $f^k(y_1)$ instead of $y$ and $f^k(y)$, we obtain $\eps$-stable continua $C_2$, $C_2^*$ and $\eps$-unstable continua $C_2'$, $C_2^{**}$ forming a rectangle, with 
$$y_2\in C_2'\cap C_2^{*}, \,\,\,\,\,\, f^k(y_2)\in C_2\cap C_2^{**}, \,\,\,\,\,\, z_1\in C_2\cap C_2' \,\,\,\,\,\, \text{and} \,\,\,\,\,\,  z_2\in C_2^{*}\cap C_2^{**},$$ such that 
$$D(C_{2{(f^k(y_2),z_1)}})\leq(1+\delta)^34^2\lambda^{2k}c, \,\,\,\,\,\, D(C^*_{2(z_2,y_2)})\leq(1+\delta)^44^2\lambda^{2k}c,$$
$$D(C'_{2{(z_1,y_2)}})\leq(1+\delta)^24^2\lambda^{2k}c \,\,\,\,\,\, \text{and} \,\,\,\,\,\, D(C^{**}_{2{(f^k(y_2),z_2)}})\leq(1+\delta)^34^2\lambda^{2k}c.$$ Note that $C_2$ is obtained iterating $C^*$ by $f^k$ and applying a stable/unstable holonomy, $C_2^*$ is obtained from $C_2$ applying a stable/unstable holonomy, $C_2'$ is obtained from $C^{**}$ applying a stable/unstable holonomy and iterating by $f^{-k}$, and $C_2^{**}$ is obtained from $C_2'$ applying a stable/unstable holonomy. Applying the same argument over and over we can construct, inductively, for each $n\in\N$, $\eps$-stable continua $C_n$, $C_n^*$ and $\eps$-unstable continua $C_n'$, $C_n^{**}$ forming a rectangle, with 
$$y_n\in C_n'\cap C_n^{*}, \,\,\,\,\,\, f^k(y_n)\in C_n\cap C_n^{**}, \,\,\,\,\,\, z_{n-1}\in C_n\cap C_n' \,\,\,\,\,\, \text{and} \,\,\,\,\,\,  z_n\in C_n^{*}\cap C_n^{**},$$ such that 
$$D(C_{n{(f^k(y_n),z_{n-1})}})\leq(1+\delta)^{2n-1}4^{n}\lambda^{nk}c, \,\,\,\,\,\, D(C^*_{n{(z_n,y_n)}})\leq(1+\delta)^{2n}4^{n}\lambda^{nk}c,$$
$$D(C'_{n{(z_{n-1},y_n)}})\leq(1+\delta)^{2(n-1)}4^n\lambda^{nk}c \,\,\,\,\,\, \text{and} \,\,\,\,\,\, D(C^{**}_{n{(f^k(y_n),z_n)}})\leq(1+\delta)^{2n-1}4^n\lambda^{nk}c,$$
see  Figure \ref{arg2}.
\begin{figure}[h]
    \centering
    \includegraphics[scale=0.6]{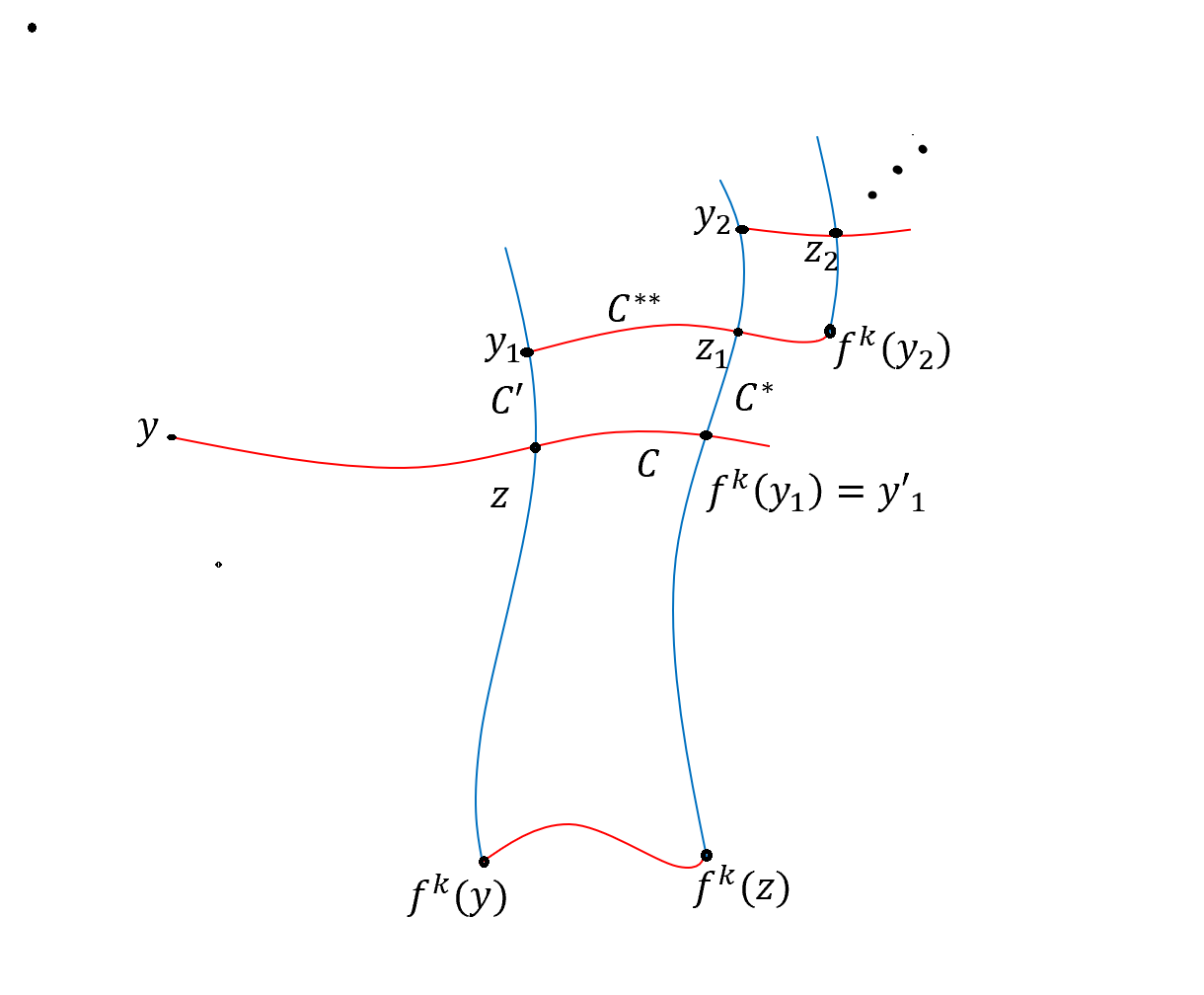}
    \caption{Construction of $y_n$.}
    \label{arg2}
\end{figure}
Let $F_n=C_n\cup C_n'$ and note that $F_n$ is a continuum since $z_{n-1}\in C_n\cap C_n'$. Using the metric properties of $D$ we obtain
\begin{eqnarray*}
D(F_{n{(f^k(y_n),y_n)}})&\leq& D(C_{n{(f^k(y_n),z_{n-1})}}) + D(C'_{n{(z_{n-1},y_n)}})\\
&\leq& (1+\delta)^{2n-1}4^{n}\lambda^{nk}c + (1+\delta)^{2(n-1)}4^n\lambda^{nk}c\\
&<&(1+\delta)^{2n}4^{n}\lambda^{nk}c\\
&=&[(1+\delta)^24]^n\lambda^{nk}c.
\end{eqnarray*}
The choice of $k\geq k_0$ and Lemma \ref{leminha} ensure that
$$D(F_{n{(f^k(y_n),y_n)}})\to0 \,\,\,\,\,\, \text{when} \,\,\,\,\,\, n\to\infty,$$
and the compatibility between $D$ and $\diam$ ensures that $$d(y_n,f^k(y_n))\to0 \,\,\,\,\,\, \text{when} \,\,\,\,\,\, n\to\infty.$$
Thus, if $q\in X$ is an accumulation point of $(y_n)_{n\in\N}$, then it is also an accumulation point of $(f^k(y_n))_{n\in\N}$, and the continuity of $f$ ensures that $q=f^k(q)$, that is, $q$ is a periodic point. We still need to prove that $d(q,p)<\alpha$. We can estimate $d(q,p)$ as follows:
\begin{eqnarray*}
d(q,p)&\leq& d(q,z)+d(z,y)+d(y,p)\\
&\leq& d(q,z)+\eps+\delta\\
&<& d(q,z)+\frac{2\alpha}{3},
\end{eqnarray*}
while $d(q,z)$ can be estimated in the following way: let $$H=\bigcup_{n\in\N}C_n^*\cup C_n',$$
where $C_1^*=C^*$ and $C_1'=C'$, note that $q,z\in\overline{H}$ since $q$ is accumulated by $y_n\in C_n^*\cap C_n'$ and $z\in C'$, and that $H$ is connected because 
$$z_{n-1}\in C_n'\cap C_{n-1}^* \,\,\,\,\,\, \text{for every} \,\,\,\,\,\, n\in\N,$$ so $\overline{H}$ is a continuum. Thus,
\begin{eqnarray*}
D(H_{(q,z)})&\leq&\sum_{n=1}^{\infty}\left[D(C^*_{n{(z_n,y_n)}})+D(C'_{n{(y_n,z_{n-1})}})\right]\\
&\leq&\sum_{n=1}^{\infty}\left[(1+\delta)^{2n}4^{n}\lambda^{nk}c+(1+\delta)^{2(n-1)}4^n\lambda^{nk}c\right]\\
&<&\sum_{n=1}^{\infty}\left[2(1+\delta)^{2n}4^{n}\lambda^{nk}c\right]\\
&\leq&2c\beta.
\end{eqnarray*}
The choice of $\beta$ ensures that
$$\diam(H)\leq\ga \,\,\,\,\,\, \text{and} \,\,\,\,\,\, d(q,z)\leq\ga<\frac{\alpha}{3}.$$ This proves that $d(q,p)<\alpha$ and finishes the proof.

 \end{proof}
 
\vspace{+0.6cm}

\section{Continuous and pseudo-isometric joint stable/unstable holonomies}\label{existself}

To prove that joint stable/unstable holonomies are pseudo isometric, we adapt the techniques in \cite{Ar} to the case of cw-hyperbolic homeomorphisms. The main idea there is to obtain a self-similar hyperbolic metric for an expansive homeomorphism and prove that in this metric, local stable/unstable holonomies are pseudo-isometric. We begin this section proving the existence of a self-similar hyperbolic $cw$-metric for $cw$-expansive homeomorphisms. We adapt the construction of the cw-metric introduced in \cite{ACCV3} and obtain a distinct function satisfying (1), (2) and (3) in Theorem \ref{teoCwHyp} and the additional property of self-similarity. We call a function $D\colon E\to\mathbb{R}$ satisfying (1), (2) and (3) a hyperbolic cw-metric.

\begin{definition}
A hyperbolic $cw$-metric $D$ is said to be self-similar if 
there exist $\xi>0$ and $\lambda>1$ such that if $D(C_{(p,q)})\leq \xi $, then
 \begin{equation}
     \max\{D(f(C_{(p,q)})),D(f^{-1}(C_{(p,q)}))\}=\lambda D(C_{(p,q)}).
 \end{equation}
  The constant $\xi$ is called the cw-expansiveness constant of $D$ and $\lambda$ is called the expansion constant of $D$.
\end{definition}
Later in this section we will prove consequences of the self-similar property for a hyperbolic cw-metric. Now, we prove its existence.

\begin{theorem}\label{existenceself} Every $cw$-expansive homeomorphism of a Peano continuum admits a hyperbolic self-similar $cw$-metric.
\end{theorem}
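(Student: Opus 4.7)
The strategy is to adapt Arbieto's construction \cite{Ar} of a self-similar adapted metric for expansive homeomorphisms to the continuum-wise setting, using the hyperbolic cw-metric $D$ of Theorem \ref{teoCwHyp} as the starting point rather than a genuine metric. Let $\lambda_0 \in (0,1)$ and $\varepsilon>0$ be the hyperbolicity constants of $D$, and fix a target expansion rate $\lambda \in (1, \lambda_0^{-1})$. The plan is to first pass from $D$ to an auxiliary function $\rho$ that is self-similar almost by definition, and then Frink-symmetrize $\rho$ to regain the triangle inequality.

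First, I would define $\rho\colon E \to \mathbb{R}_{\geq 0}$ by
\[
\rho(C_{(p,q)}) = \sup_{n\in\mathbb{Z}} \lambda^{-|n|}\, D\bigl(f^n(C)_{(f^n(p),f^n(q))}\bigr),
\]
where the supremum is implicitly restricted to those $n$ for which $\diam(f^n(C))$ stays below a uniform threshold coming from the cw-expansiveness constant (the contribution being set to $0$ outside that range). Using the hyperbolicity of $D$, compatibility between $D$ and $\diam$, and the inequality $\lambda<\lambda_0^{-1}$, the supremum is finite on inputs with small enough $D$-value, and one gets uniform comparability $D \leq \rho \leq K\,D$ on such small inputs, for a constant $K$ depending only on $\lambda$ and $\lambda_0$. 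Symmetry (1b) is inherited from $D$.

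To recover the triangle inequality (1c) I would apply a Frink-type chain construction suited to the triple space $E$: set
\[
D'(C_{(p,q)}) = \inf \sum_{i=1}^{N} \rho\bigl((A_i)_{(a_{i-1},a_i)}\bigr),
\]
the infimum ranging over finite decompositions $C = A_1 \cup \cdots \cup A_N$ into subcontinua with $p=a_0$, $q=a_N$ and $a_i \in A_i \cap A_{i+1}$. Property (1c) is immediate, (1b) descends from $\rho$, and an adaptation of Frink's lemma gives $\tfrac{1}{2}\rho \leq D' \leq \rho$, yielding (1a) and the compatibility property (3) via the uniform comparability $D \leq D' \leq K\,D$. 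The self-similarity follows from the fact that an index shift in the sup defining $\rho$ gives
\[
\max\bigl\{\rho(f(C_{(p,q)})),\, \rho(f^{-1}(C_{(p,q)}))\bigr\} = \lambda\,\rho(C_{(p,q)})
\]
whenever the supremum is attained at some $n\neq 0$; this identity then descends to $D'$ through the chain construction, with cw-expansiveness constant $\xi$ and expansion constant $\lambda$. Finally, property (2) follows by iterating the self-similar identity, since for $C \in \C^s_\varepsilon(X)$ the iterates $f^n(C)$ must contract under $f$, so the self-similar expansion is forced into the $f^{-1}$ direction.

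The main obstacle I expect will be ensuring that the supremum defining $\rho$ is attained at some $n\neq 0$ for inputs below the threshold $\xi$; without this, one only gets the inequality $\leq$ in the self-similar identity. In Arbieto's expansive setting \cite{Ar} this is clean because one works with a genuine metric and a local product structure that splits every small displacement into a stable and an unstable piece, each strictly expanded by $f^{-1}$ or $f$ respectively. In the cw-setting a continuum $C$ need not decompose into a stable part and an unstable part in a canonical way, so I anticipate needing to refine the chain construction so that the decomposition $A_1,\ldots,A_N$ is chosen inside the cw-local-product-structure (intersecting $C$ with the stable/unstable continua of its points), and to calibrate the threshold $\xi$ small enough that at least one $A_i$ sits in $\C^s_\varepsilon(X) \cup \C^u_\varepsilon(X)$, forcing the sup on that piece to be attained at a nonzero index. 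The rest of the argument is then routine bookkeeping with the hyperbolic rates of $D$ and the compatibility property (3).
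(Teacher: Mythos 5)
Your overall architecture (a geometrically weighted supremum over all iterates, plus the index-shift trick to turn an inequality into the exact identity $\max\{D(fC),D(f^{-1}C)\}=\lambda D(C)$) is the same as the paper's final step, but the proof as proposed has a genuine gap exactly at the point you flag: everything hinges on showing that, for inputs with small value, the weighted supremum is attained away from $n=0$, i.e.\ on an expansion estimate of the form $\max\{\,\cdot(f(C_{(p,q)})),\,\cdot(f^{-1}(C_{(p,q)}))\,\}\ge\lambda\,\cdot(C_{(p,q)})$ for small continua. This does not follow from properties (1)--(3) of Theorem \ref{teoCwHyp}: hyperbolicity (2) only gives contraction along stable/unstable continua, and compatibility (3) is purely qualitative, so neither justifies your choice of a rate $\lambda\in(1,\lambda_0^{-1})$ as an available expansion rate for \emph{arbitrary} small continua. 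The paper gets this estimate not from the abstract cw-metric but from the primitive escape-time function $\rho(C)=\alpha^{-N(C)}$ with $\alpha=2^{1/m}$, where $m$ is Kato's uniform escape time; the growth $\max\{\rho(f^nC),\rho(f^{-n}C)\}\ge\alpha^n\rho(C)$ is then transferred to the chained function $P$ through the comparison $P\le\rho\le 4P$ of \cite{ACCV}, the factor $4$ is absorbed by the finite-window maximum $D'$ with $\lambda^{n_0}=\alpha^{n_0}/4$ (Proposition \ref{Fathi'}), and only after that is the bi-infinite weighted supremum taken. Without an analogue of Proposition \ref{Fathi'}, your argument only yields the inequality $\le$ in the self-similar identity.

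Your proposed route to that missing estimate is also not viable as stated. First, Theorem \ref{existenceself} assumes only cw-expansiveness, while your plan invokes the cw-local-product-structure, i.e.\ cw-hyperbolicity, which is more than is given. Second, even under cw-hyperbolicity, the cw-local-product-structure produces intersection points of stable and unstable continua of nearby points; it does not decompose an arbitrary small continuum $C$ into pieces lying in $\C^s_\eps(X)\cup\C^u_\eps(X)$, and the chain infimum in your Frink step ranges over \emph{all} decompositions, so it cannot be forced to respect such a choice even when one exists. A further structural remark: since your truncated $\rho$ is a supremum of rescaled copies of $D$, it already satisfies the triangle-type property (1c), so the Frink step collapses to $D'=\rho$ and buys nothing; conversely, if a genuine Frink comparison with a multiplicative loss were needed, the exact equality in the self-similarity would not survive the passage to $D'$. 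This is why the paper orders the construction the other way around — chain construction first on the escape-time function, then the finite max, then the bi-infinite sup — so that the self-similar identity is established exactly at the last step.
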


We start by recovering the construction of the hyperbolic $cw$-metric performed in \cite{ACCV} and then we make some modifications in order to obtain the self-similarity. Let $f$ be a $cw$-expansive homeomorphism with cw-expansiveness constant $c>0$. By the results in \cite{Ka}, there exists $m\in\N$ such that if $C\in \mathcal{C}(M)$ satisfies $diam(C)>\frac{c}{2}$, then 
\begin{equation*}
    \max_{|n|\leq m}\{diam(f^n(C))\}>c.  
\end{equation*}
For each $C\in \mathcal{C}(M)$, let 
\begin{equation}\label{rho}
    N(C)=\begin{cases}
      \min\{|n|;diam(f^n(C))>c\}, \textrm{ if C is not  a singleton}\\
      \infty, \textrm{ if C is a singleton}
     \end{cases}
\end{equation}
Let $\alpha=2^{\frac{1}{m}}$ and define $\rho:\mathcal{C}(M)\to \mathbb{R}$ by $$\rho(C)=\alpha^{-N(C)}.$$
 Now we define the function $P\colon E\to \mathbb{R}$ by 
 \begin{equation}\label{D}
     P(C_{(p,q)})=\inf \sum_{i=1}^n \rho(A^i_{(a_{i-1},a_i)}),
 \end{equation}
where the infimum is taken over all $n\geq 1$, $a_0=p$, $a_n=q$,  $A_1,\dots,A_n\in \mathcal{C}(M)$ and $C=\cup_{i=1}^n A_i$. The previous function is exactly the function introduced as the hyperbolic $cw$-metric of $f$ in \cite{ACCV}. To follow the proof in \cite{Ar} we first need to modify the function $P$ as is done by Fathi in \cite{Fa}. Choose $n_0\in\N$ such that $k=\frac{\alpha^{n_0}}{4}>1$ and let $\lambda=k^{\frac{1}{n_0}}$. Define the function $D'\colon E\to\mathbb{R}$ by
\begin{equation}
    D'(C_{(p,q)})=\max_{ |i|\leq n_0-1}\left\{\frac{P(f^i(C)_{(f^i(p),f^i(q))})}{\lambda^{|i|}}\right\}
\end{equation}
It is straightforward to verify that $D'$ is still a cw-metric. The reason this modification is done is to obtain the following estimative:
\begin{proposition}\label{Fathi'}
There exists $\delta>0$ such that if $D'(C_{(p,q)})\leq \delta $, then $$\max\{D'(f(C_{(p,q)})),D'(f^{-1}(C_{(p,q)}))\}\geq \lambda D'(C_{(p,q)}).$$
\end{proposition}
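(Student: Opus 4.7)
The plan is a case analysis on the index $i^*\in\{-(n_0-1),\dots,n_0-1\}$ at which $D'(C)$ attains its defining maximum, so that $D'(C)=P(f^{i^*}C)/\lambda^{|i^*|}$. I would rewrite $D'(fC)=\max_{|j|\leq n_0-1}P(f^{j+1}C)/\lambda^{|j|}$ via the change of variables $i=j+1$ (admissible range $i\in\{-n_0+2,\dots,n_0\}$); the choice $i=i^*$ is then admissible whenever $i^*\geq 1$ and produces the term $P(f^{i^*}C)/\lambda^{i^*-1}=\lambda\,D'(C)$, so $D'(fC)\geq\lambda\,D'(C)$ with no smallness hypothesis at all. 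The case $i^*\leq-1$ is symmetric, using $D'(f^{-1}C)$ and the change of variables $i=j-1$.

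The only delicate case is $i^*=0$, where $D'(C)=P(C)$. Plugging the extreme admissible values $i=n_0$ into $D'(fC)$ and $i=-n_0$ into $D'(f^{-1}C)$ yields
\begin{equation*}
\max\{D'(fC),\,D'(f^{-1}C)\}\ \geq\ \frac{\max\{P(f^{n_0}C),\,P(f^{-n_0}C)\}}{\lambda^{n_0-1}},
\end{equation*}
so the proposition reduces to establishing, for $P(C)\leq\delta$ sufficiently small, the auxiliary inequality
\begin{equation*}
(\ast)\qquad \max\{P(f^{n_0}C),\,P(f^{-n_0}C)\}\ \geq\ \lambda^{n_0}P(C)\ =\ \tfrac{\alpha^{n_0}}{4}P(C).
\end{equation*}

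For $(\ast)$, the plan is to work pointwise with $\rho$ and then transfer to $P$. From $\rho(A)=\alpha^{-N(A)}$ together with the fact that $N(f^{\pm n_0}A)\leq N(A)-n_0$ in the direction matching the sign of the shift realizing $N(A)$, one obtains immediately
\begin{equation*}
\max\{\rho(f^{n_0}A),\,\rho(f^{-n_0}A)\}\ \geq\ \alpha^{n_0}\rho(A)
\end{equation*}
for every subcontinuum $A$ with $N(A)\geq n_0$. I would then use the compatibility between $P$ and $\diam$ together with cw-expansivity to choose $\delta$ so that $P(C)\leq\delta$ forces every piece $A^i$ of a near-optimal decomposition $C=\bigcup A^i$ to satisfy $N(A^i)\geq n_0$. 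Grouping the pieces by which of $f^{n_0},f^{-n_0}$ realizes their pointwise expansion and keeping the majority group, which carries at least half of $\sum_i\rho(A^i)\approx P(C)$, the iterated pieces provide a decomposition of $f^{\pm n_0}C$ with $\rho$-sum at least $\tfrac{\alpha^{n_0}}{2}P(C)$.

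The principal obstacle is precisely the final transfer from this chosen decomposition to the infimum $P(f^{\pm n_0}C)$: since $P$ is an infimum over all decompositions, lower bounds do not follow formally from estimates on a single decomposition. Overcoming this requires ruling out an alternative, finer decomposition of $f^{\pm n_0}C$ with strictly smaller $\rho$-sum whose pullback to $C$ would violate the near-optimality of the decomposition of $C$. Tracking the signs of the shifts realizing $N$ on each piece, combined with the duality that forward expansion of $\rho$ corresponds to backward contraction, closes this gap and ultimately yields the constant $\alpha^{n_0}/4$, with the extra factor of $2$ absorbing the loss from the majority-group selection and the choice of $n_0$ ensuring $\alpha^{n_0}/4>1$.
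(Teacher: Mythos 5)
Your reduction is sound and is essentially the paper's: the cases where the maximum defining $D'(C_{(p,q)})$ is attained at $i^*\neq 0$ need no smallness hypothesis, and the case $i^*=0$ reduces to the inequality $(\ast)$, i.e. $\max\{P(f^{n_0}(C)),P(f^{-n_0}(C))\}\geq\frac{\alpha^{n_0}}{4}P(C_{(p,q)})=\lambda^{n_0}P(C_{(p,q)})$, which is exactly inequality (\ref{P}) of the paper with $n=n_0$. The genuine gap is in your proposed proof of $(\ast)$. The quantity $P(f^{\pm n_0}(C))$ is an \emph{infimum} over all decompositions of the iterated continuum, so exhibiting one family of iterated pieces with large $\rho$-sum gives no lower bound whatsoever; worse, your ``majority group'' of pieces need not even cover $f^{\pm n_0}(C)$, so it is not a decomposition of it at all, and since different pieces of a near-optimal decomposition of $C$ may expand in different time directions, there is no single direction in which the whole constructed family works. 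You identify this obstacle yourself, but the sentence claiming that ``tracking the signs'' and a ``duality'' close it is an assertion, not an argument: nothing in the proposal rules out a finer decomposition of $f^{\pm n_0}(C)$ with much smaller $\rho$-sum, which is precisely what must be excluded.

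The way the paper (following \cite{ACCV} and Fathi) avoids this bookkeeping entirely is the comparison $P(C_{(p,q)})\leq\rho(C)\leq 4P(C_{(p,q)})$, quoted just before the proposition. With $\delta=\frac{1}{4\alpha\lambda^{n_0-1}}$, the hypothesis $D'(C_{(p,q)})\leq\delta$ gives $P(f^i(C_{(p,q)}))\leq\lambda^{|i|}D'(C_{(p,q)})\leq\frac{\alpha^{-1}}{4}$, hence $\rho(f^i(C))\leq\alpha^{-1}$ for all $|i|\leq n_0-1$ (so no detour through diameter compatibility is needed). One then applies the pointwise inequality (\ref{rho2}) to the \emph{single} continuum $C$ -- no decompositions of pieces appear -- and transfers back: $\max\{P(f^{n_0}(C)),P(f^{-n_0}(C))\}\geq\frac{1}{4}\max\{\rho(f^{n_0}(C)),\rho(f^{-n_0}(C))\}\geq\frac{\alpha^{n_0}}{4}\rho(C)\geq\frac{\alpha^{n_0}}{4}P(C_{(p,q)})$. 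In particular the factor $4$ in $\alpha^{n_0}/4$ is the cost of the comparison between $P$ and $\rho$, not of a majority-group selection. If you replace your decomposition argument for $(\ast)$ by this comparison, the rest of your proof goes through and coincides with the paper's.
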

Before proving this result we recall one property of $\rho$ and $P$ that will be important in the proof. By the definition of $\rho$ we can see that $\rho(C)\leq 1$ for every $C\in\mathcal{C}(X)$, and if $$\max_{|i|\leq n-1}\rho(f^i(C))\leq \alpha^{-1},$$
then  
\begin{equation}\label{rho2}
      \max\{\rho(f^n(C)),\rho(f^{-n}(C))\}\geq \alpha^n\rho(C).
\end{equation}
Using the following inequalities proved in \cite{ACCV}:
$$P(C_{(p,q)})\leq \rho(C)\leq 4P(C_{(p,q)}),$$
a similar result can be obtained for $P$:
if $$\max_{|i|\leq n-1}P(f^i(C))\leq\frac{\alpha^{-1}}{4},$$
then  
\begin{equation}\label{P}
\max\{P(f^n(C)),P(f^{-n}(C))\}\geq\frac{\alpha^n}{4}P(C).
\end{equation}
Similar inequalities are obtained in the proof of Theorem 5.1 in \cite{Fa}.

\begin{proof}[Proof of Proposition \ref{Fathi'}]
Let $\delta=\frac{1}{4\alpha\lambda^{n_0-1}}$ and assume that $C_{(p,q)}\in E$ satisfies $D'(C_{(p,q)})\leq\delta$. This implies that $$\max_{|i|\leq n_0-1}P(f^i(C_{(p,q)}))\leq\frac{\alpha^{-1}}{4}$$ and, hence, inequality (\ref{P}) ensures that
\begin{equation}\label{P2}
\max\{P(f^{n_0}(C)),P(f^{-n_0}(C))\}\geq\frac{\alpha^{n_0}}{4}P(C).
\end{equation}
The definition of $D'$ ensures that the following inequality holds:
\begin{equation}
    \max\left\{D'(f(C_{(p,q)})),D'(f^{-1}(C_{(p,q)}))\right\}\geq \max_{0<|i|\leq n_0}\left\{\frac{P(f^i(C_{(p,q)}))}{\lambda^{|i|-1}}\right\}.
\end{equation}
    Let
    \begin{equation}\label{A}
        A=\max_{0<|i|<n_0}\lambda\left\{\frac{P(f^i(C_{(p,q)}))}{\lambda^{|i|}}\right\}.
    \end{equation}
  and
    \begin{equation}\label{B}
      B=\frac{\max\left\{P(f^{n_0}(C_{(p,q)})),P(f^{-n_0}(C_{(p,q)}))\right\}}{\lambda^{n_0-1}}.
  \end{equation}
   Inequality (\ref{P2}) ensures that 
  \begin{equation}
      B\geq \lambda P(C_{(p,q)}),
  \end{equation}
 and, hence,
 $$\max\{D'(f(C_{(p,q)})),D'(f^{-1}(C_{(p,q)})\}\geq \lambda D'(C_{(p,q)}).$$

  \end{proof}

\begin{remark}\label{upperbound}
    Note that $D'(C_{(p,q)})\leq 1$ for every $C_{(p,q)}\in E$.
\end{remark}

We finally construct a self-similar hyperbolic $cw$-metric.

  \begin{proof}[Proof of Theorem \ref{existenceself}]
  
Define $D:E\to \mathbb{R}$ by 
$$  D(C_{(p,q)})=\sup_{ i\in \mathbb{Z}}\left\{\frac{D'(f^i(C_{(p,q)}))}{\lambda^{|i|}}\right\} $$
First, note that by letting $i=0$ on the right side of above equality we obtain $D\geq D'$. This and Proposition \ref{Fathi'} are enough to prove the self-similarity. Indeed, note that
$$\max\{D(f(C_{(p,q)})),D(f^{-1}(C_{(p,q)}))\}=\max_{i\in \Z}\frac{D'(f^{i}(C_{(p,q)}))}{\lambda^{|i\pm1|}}$$
and that
$$\max_{i\in \Z}\frac{D'(f^{i}(C_{(p,q)}))}{\lambda^{|i|-1}}\\
  =\lambda D(C_{(p,q)}).$$
We prove that if $D(C_{(p,q)})\leq\delta$, then
\begin{equation}\label{max}
       \max_{i\in \Z}\frac{D'(f^{i}(C_{(p,q)}))}{\lambda^{|i\pm1|}}=\max_{i\in \Z}\frac{D'(f^{i}(C_{(p,q)}))}{\lambda^{|i|-1}}.
    \end{equation} 
Since $\min\{|i+1|,|i-1|\}\geq |i|-1$ and $\lambda>1$, it follows that
\begin{equation*}
       \max_{i\in \Z}\frac{D'(f^{i}(C_{(p,q)}))}{\lambda^{|i\pm1|}}\leq\max_{i\in \Z}\frac{D'(f^{i}(C_{(p,q)}))}{\lambda^{|i|-1}}.
    \end{equation*} 
If $D(C_{(p,q)})\leq\delta$, then $D'(C_{(p,q)})\leq D(C_{(p,q)})\leq\delta$, and Proposition \ref{Fathi'} ensures the following inequalities
\begin{equation*}
       \max_{i\in \Z}\frac{D'(f^{i}(C_{(p,q)}))}{\lambda^{|i\pm1|}}\geq\max_{i= \pm 1}\frac{D'(f^{i}(C_{(p,q)}))}{\lambda^{|i\pm1|}}\geq \lambda D'(C_{(p,q)}).
    \end{equation*}
This proves that the maximum in (\ref{max}) occur in a coordinate different from 0. But since $\min\{|i+1|,|i-1|\}$ and $|i|-1$ differ only when $i=0$, the equality (\ref{max}) is proved.

This proves that if $D(C_{(p,q)})\leq \delta$, then 
$$\max\{D(f(C_{(p,q)})),D(f^{-1}(C_{(p,q)}))\}=\lambda D(C_{(p,q)}).$$

To prove that $D$ is a cw-metric, we need to prove compatibility between $D$ and $D'$, that in turn, implies compatibility between $D$ and $\diam$. Items (1) and (2) of the definition of a cw-metric follow directly from the definition of $D$.

By Remark \ref{upperbound} we have that 
\begin{equation}\label{comp1}
    \frac{D'(f^i(C_{p,q}))}{\lambda^{|i|}}\leq \frac{1}{\lambda^{|i|}},
\end{equation}
for every $i\in \mathbb{Z}$.
For each $\eps>0$, choose $j_0>0$ such that \begin{equation}\label{comp2}
\frac{1}{\lambda^{j_0}}\leq \eps.
\end{equation}
Since $diam$ and $D'$ are compatible, we can find  $\delta>0$ such that if $diam(C)\leq \delta$, then $D'(C_{(p,q)})\leq \eps$. On the other hand, by the continuity of $f$ one can find $\eta>0$ such that if  $diam(C)\leq \eta$, then  $diam(f^i(C))\leq \delta $, for $-j_0\leq i\leq j_0$. Next we use  the compatibility between $D'$ and $diam$ again to find $\xi>0$ such that if $D'(C_{p,q})\leq \xi$, then $diam(C)\leq \eta$.

Now suppose $D'(C_{(p,q)})\leq \xi$. By the previous discussion, we have that $diam(C)\leq \eta$ and therefore, $$\sup_{-j_0\leq i \leq j_0} \{diam(f^i(C))\}\leq \delta.$$
Moreover, the choice of $\delta$ implies $$\sup_{-j_0\leq i \leq j_0} \{D'(f^i(C))\}\leq \eps.$$
In addition, since $\lambda>1$, we have

\begin{equation}\label{comp3}
\sup_{-j_0\leq i \leq j_0}\left\{ \frac{D'(f^i(C))}{\lambda^{|i|}}\right\}\leq \eps. 
\end{equation}

Finally, combining the equations (\ref{comp1}), (\ref{comp2}) and (\ref{comp3}) we obtain that $$D(C_{(p,q)})=\sup_{i\in \mathbb{Z}}\frac{D'(f^i(C_{(p,q)})}{\lambda^|i|}\leq \eps$$ and the proof is complete.

\end{proof}

From now on we will always assume that $D$ is a self-similar hyperbolic cw-metric. The second step in this section is the proof of pseudo-isometric local stable/unstable holonomies, assuming they are jointly continuous. In \cite{Ar} the assumption of continuity is hidden in the proof of pseudo-isometry, since for topologically hyperbolic homeomorphisms they are always continuous. We still do not know how to prove continuity of stable/unstable holonomies for cw-hyperbolic homeomorphisms, so continuity is an assumption for us in this section. Also, there is the difference of joint continuity/pseudo-isometry explained above that does not exist in the expansive case. 

We begin by defining joint continuity based in Definition \ref{pseudo} of pseudo-isometric stable/unstable holonomies.
\begin{definition}\label{cw-cont}[Continuous joint stable/unstable holonomies]. We say that $f$ has \emph{continuous joint stable/unstable holonomies} if for each $\eta>0$ there exists $\ga>0$ satisfying:
if $d(x,y)<\delta$, $C$ is a subcontinuum of $C^s_{\eps}(x)$, $p,q\in C$, $D(C_{(p,q)})\leq \ga$, $p^*\in \pi^u_{x,y}(p)$, $C'$ is a subcontinuum of $C^u_{\eps}(q)$ containing $p$ and $p^*$, and $D(C'_{(p,p^*)})\leq\ga$, then there exist subcontinua $C^*$ of $C^s_{\eps}(y)$ containing $p^*$ and $C^{**}$ of $C^u_{\eps}(q)$ containing $q$, and there exists $q^*\in C^{*}\cap C^{**}$ such that 
$$|D(C^*_{(p^*,q^*)})|\leq \eta \,\,\,\,\,\, \text{and} \,\,\,\,\,\, |D(C^{**}_{(p,p^*)})|\leq \eta.$$

\end{definition}

This is the same as in Figure \ref{jointly pseudo-iso} assuming that two sides of the rectangle are small and concluding that the other two are also small. The only change is the relation between the sizes of the rectangle. The following is the main result of this section.

\begin{theorem}\label{pseudocwN}
Let $f$ be a $cw$-expansive homeomorphism with a self-similar hyperbolic $cw$-metric $D$. If $f$ has jointly continuous stable/unstable holonomies, then $f$ has pseudo-isometric joint stable/unstable holonomies. 
\end{theorem}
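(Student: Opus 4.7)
The strategy follows \cite{Ar}, adapted to the cw-setting: use the exact scaling of the self-similar metric $D$ under iteration to upgrade the absolute continuity estimates of the hypothesis into the relative (ratio) estimates demanded by pseudo-isometry.

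First, I would verify naturality of the joint holonomy configuration under iteration. If $(C, C', C^*, C^{**}, q^*)$ realizes the joint holonomy at $(x, y)$ with $d(x,y) < \delta$, then $(f^n(C), f^n(C'), f^n(C^*), f^n(C^{**}), f^n(q^*))$ realizes the joint holonomy at $(f^n(x), f^n(y))$ for every $n \geq 0$ such that the four continua remain of size at most $\eps$, using the $f$-invariance of local stable and unstable continua and the compatibility of $f$ with the cw-local-product-structure. Combined with self-similarity, $D(f^n(C)) = \la^{-n} D(C)$ and $D(f^n(C^*)) = \la^{-n} D(C^*)$ for the stable sides (assuming $D(C), D(C^*) \leq \xi$), while $D(f^n(C')) = \la^n D(C')$ and $D(f^n(C^{**})) = \la^n D(C^{**})$ for the unstable ones. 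Consequently the two critical ratios $D(C^*)/D(C)$ and $D(C^{**})/D(C')$ are invariant under this iteration, provided the initial $\ga$ is chosen small enough that $\la^n D(C'), \la^n D(C^{**}) \leq \xi$ throughout the chosen range of $n$.

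To deduce pseudo-isometry, fix $\eta > 0$, choose $N$ large, and take $\ga > 0$ so small that after $N$ forward iterates both stable sides fit within the joint-continuity threshold corresponding to a small output tolerance $\mu$, while the unstable sides still satisfy $\la^N D(C'), \la^N D(C^{**}) < \xi$. Joint continuity applied to the iterated configuration yields an image of $D$-value at most $\mu$, and self-similarity pulls this back via $D(C^*) = \la^N D(f^N(C^*))$ to the original scale. Tuning $\mu$ so that $\la^N \mu \leq (1+\eta)D(C)$ gives the upper bound $D(C^*) \leq (1+\eta) D(C)$; the lower bound $D(C^*) \geq (1-\eta) D(C)$ follows by applying the same argument to the inverse holonomy $\pi^s_{y,x}$, and analogous estimates control $D(C^{**})$ in terms of $D(C')$.

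The principal obstacle is precisely the conversion of an absolute continuity estimate into a relative one: continuity only says $D(C^*)$ is small when $D(C)$ is small, not that the ratio is close to $1$. The essential point is that self-similarity renders this ratio scale-invariant, so it suffices to control it at one sufficiently small scale. Technically, I would subdivide $C$ into finitely many subcontinua each below the continuity threshold for a very small target $\mu$, apply joint continuity to each piece, and reassemble using the cw-local-product-structure together with the sub-additivity of $D$ in Theorem \ref{teoCwHyp}(1)(c) to bound $D(C^*) \leq \sum_i D(C_i^*)$. The set-valued nature of cw-holonomies means compatible selections from each $\pi^s_{x,y}$ and $\pi^u_{x,y}$ must be chosen at every gluing step so that the assembled $C^*, C^{**}$ are genuine subcontinua meeting at a single $q^*$; iterating this subdivision together with the scale invariance provided by self-similarity then pins down the modulus $\omega(t)/t \to 1$ as $t \to 0$, yielding the desired pseudo-isometric bound.
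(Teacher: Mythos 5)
Your setup (exact scaling from self-similarity and invariance of the ratios $D(C^*)/D(C)$, $D(C^{**})/D(C')$ under iteration) matches the paper's, but the step meant to produce the ratio estimate does not close, and it fails exactly at the point you yourself flag as the principal obstacle. Joint continuity (Definition \ref{cw-cont}) is an absolute statement: for a uniform threshold $\ga(\mu)$ it only guarantees outputs of $D$-size at most $\mu$. In your scheme you ``tune $\mu$ so that $\la^N\mu\leq(1+\eta)D(C_{(p,q)})$''; but $\ga$ must be chosen uniformly in $\eta$ while $D(C_{(p,q)})$ ranges over all of $(0,\ga]$, so no fixed pair $(N,\mu)$ can achieve this, and if you let $N,\mu$ depend on the configuration you need $D(f^N(C))\leq\ga\bigl((1+\eta)D(f^N(C))\bigr)$, i.e.\ a modulus of continuity at least linear near $0$ --- which is precisely the relative estimate being proved, so the argument is circular. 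The same objection hits the subdivision-and-reassembly variant: with roughly $D(C_{(p,q)})/\ga$ pieces each contributing only the absolute bound $\mu$, you need $\mu\lesssim\ga$ to conclude, which bare continuity never supplies (and it would in any case give only an upper bound for $D(C^*)$, not the two-sided ratio bound). There is also a directional slip: applying Definition \ref{cw-cont} to the forward-iterated configuration requires the iterated unstable side to satisfy $D(f^N(C'_{(p,p^*)}))=\la^N D(C'_{(p,p^*)})\leq\ga(\mu)$, whereas you only arrange that it stays below $\xi$; forward iteration expands the unstable sides, so the hypothesis of joint continuity is generally violated at time $N$.

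The missing idea, which is the engine of the paper's proof, is the quadrilateral estimate coming from metric property (1)(c) of Theorem \ref{teoCwHyp}: since $C,C',C^*,C^{**}$ form a closed chain through $p,q,p^*,q^*$, two applications of the triangle-type inequality give
$$\bigl|D(f^{n}(C_{(p,q)}))-D(f^{n}(C^*_{(p^*,q^*)}))\bigr|\leq D(f^{n}(C'_{(p,p^*)}))+D(f^{n}(C^{**}_{(q,q^*)}))$$
for all $n$, and symmetrically with the stable and unstable pairs exchanged. In the paper joint continuity is invoked only once, at time zero, to make all four sides small; one then iterates in the direction that expands the stable sides and contracts the unstable ones, up to a time $m$ (necessarily $\geq m_0$ by the choice of the threshold) at which the configuration reaches a definite scale comparable to $\eps$. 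At that scale the right-hand side above is negligible compared with the stable sides, and pulling back with the exact scaling of Proposition \ref{teocontrcw} (together with the ratio-flip Lemma 4.2 of \cite{Ar} when the larger term is $D(C^*_{(p^*,q^*)})$) yields $\bigl|D(C^*_{(p^*,q^*)})/D(C_{(p,q)})-1\bigr|\leq 2/(\la^{m-1}-2)\leq\eta$; the unstable ratio is handled the same way with the opposite iteration. Without this triangle-inequality mechanism, self-similarity plus continuity alone cannot convert absolute smallness into a ratio close to $1$, so your proposal as written has a genuine gap.
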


Before proceeding to the proof, we prove some consequences of the self-similar property for a hyperbolic $cw$-metrics. The following propositions are based in \cite{Ar}.

\begin{proposition}
Let $f$ be a $cw$-expansive homeomorphim with a self-similar $cw$-metric $D$. If $D(f(C_{[p,q]}))=\lambda D(C_{[p,q]})$, then $$D(f^k(C_{[p,q]}))=\lambda^k D(C_{[p,q]}),$$ for every $k>0$ such that $D(f^{k-1}(C_{[p,q]}))\leq \xi$.    
\end{proposition}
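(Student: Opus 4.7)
The plan is a straightforward induction on $k$, driven entirely by the self-similarity equation together with the fact that $\lambda>1$ forces the maximum in the self-similar identity to be attained on the forward iterate once it has been attained at the first step.

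First, I dispose of the trivial case where $C$ is a singleton: then $D(f^j(C_{(p,q)}))=0$ for all $j$, and the conclusion holds vacuously. From now on assume $D(C_{(p,q)})>0$. The base case $k=1$ is the hypothesis. For the inductive step, suppose $k\geq 2$ and
$$D(f^j(C_{(p,q)}))=\lambda^j D(C_{(p,q)}) \quad \text{for all } 1\leq j\leq k-1,$$
with $D(f^{k-1}(C_{(p,q)}))\leq \xi$. Set $C'=f^{k-1}(C_{(p,q)})$. Since $D(C')\leq \xi$, self-similarity applied to $C'$ gives
$$\max\bigl\{D(f(C')),\, D(f^{-1}(C'))\bigr\}=\lambda D(C')=\lambda^k D(C_{(p,q)}).$$

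Now I read off the two quantities on the left in terms of the induction hypothesis. On one hand, $f^{-1}(C')=f^{k-2}(C_{(p,q)})$, so the inductive hypothesis (or, when $k=2$, the trivial identity $D(f^0(C_{(p,q)}))=D(C_{(p,q)})$) yields
$$D(f^{-1}(C'))=\lambda^{k-2}D(C_{(p,q)}).$$
On the other hand, $f(C')=f^k(C_{(p,q)})$, and that is precisely the quantity we wish to compute. Because $\lambda>1$ and $D(C_{(p,q)})>0$, we have $\lambda^{k-2}D(C_{(p,q)})<\lambda^k D(C_{(p,q)})$, so the left-hand maximum cannot be achieved by the backward term. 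It must therefore be achieved by the forward term, giving
$$D(f^k(C_{(p,q)}))=D(f(C'))=\lambda D(C')=\lambda^k D(C_{(p,q)}),$$
which closes the induction.

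The only subtle point is ensuring that self-similarity is applicable at each intermediate stage, but this is exactly the content of the standing hypothesis $D(f^{k-1}(C_{(p,q)}))\leq \xi$: it guarantees that $D(C')\leq \xi$ at the step where I invoke self-similarity. No compatibility, contraction on stable/unstable continua, or Peano-continuum hypotheses enter; the argument is purely algebraic once the self-similar identity is in hand. I anticipate no real obstacle — the proof is essentially a one-line observation that the max in the self-similar identity, once tipped to the forward side, stays on the forward side as long as the orbit remains in the cw-expansiveness window of $D$.
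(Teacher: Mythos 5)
Your proof is correct and follows essentially the same route as the paper: induction on $k$, applying the self-similar identity to $f^{k-1}(C_{(p,q)})$ and ruling out the backward term of the maximum because $\lambda>1$ makes $\lambda^{k-2}D(C_{(p,q)})$ strictly smaller than $\lambda^{k}D(C_{(p,q)})$. Your explicit treatment of the singleton case ($D=0$) is a small but welcome addition that the paper leaves implicit.
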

\begin{proof}
The result is trivial for $k=1$. Suppose that $k\geq 2 $ and that $D(f^{i}(C_{[p,q]}))=\lambda^{i} D(C_{[p,q]})$, for $i=0,1,...,k-1$. In particular, this implies that 
\begin{equation}\label{eqp} D(f^{k-1}(C_{[p,q]}))=\lambda D(f^{k-2}(C_{[p,q]})).
\end{equation}
Now, since by hypothesis we have $D(f^{k-1}(C_{[p,q]}))=\lambda^{k-1}D(C_{[p,q]})\leq \xi$, then the self-similarity implies $$\lambda D(f^{k-1}(C_{[p,q]}))= \max\{D(f^{k}(C_{[p,q]})),D(f^{k-2}(C_{[p,q]}))\}.$$
But we have $$ \lambda D(f^{k-1}(C_{[p,q]}))\neq D(f^{k-2}(C_{[p,q]})),$$
due to equation (\ref{eqp}) and this concludes the proof.
\end{proof}

\begin{proposition}
Let $f$ be a $cw$-expansive homeomorphim with a self-similar $cw$-metric $D$. If $\lambda D(C_{[p,q]})\neq D(f^{-1}(C_{[p,q]}))$, then $$D(f^k(C_{[p,q]}))=\lambda^k D(f(C_{[p,q]})),$$ for every $k>0$ such that $D(f^{k-1}(C_{[p,q]}))\leq \eps$.
\end{proposition}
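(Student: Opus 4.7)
The plan is to reduce this statement to the previous proposition by first establishing the base case $k=1$ via the self-similarity of $D$, and then iterating. The role of the hypothesis $\lambda D(C_{(p,q)})\neq D(f^{-1}(C_{(p,q)}))$ is precisely to guarantee that, in the self-similarity identity, the maximum is attained by the forward iterate rather than by the backward one, so that the previous proposition becomes directly applicable to $C_{(p,q)}$.

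Concretely, I would first take $k=1$ in the standing assumption $D(f^{k-1}(C_{(p,q)}))\leq\xi$ (reading $\eps$ as the cw-expansiveness constant $\xi$ of $D$), which gives $D(C_{(p,q)})\leq\xi$. The self-similarity of $D$ then applies to $C_{(p,q)}$ and yields
\[
\max\{D(f(C_{(p,q)})),\,D(f^{-1}(C_{(p,q)}))\}=\lambda D(C_{(p,q)}).
\]
Combined with the hypothesis $\lambda D(C_{(p,q)})\neq D(f^{-1}(C_{(p,q)}))$, this forces
\[
D(f(C_{(p,q)}))=\lambda D(C_{(p,q)}),
\]
which is exactly the hypothesis of the preceding proposition, applied to the same continuum $C_{(p,q)}$.

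Invoking that proposition now gives $D(f^k(C_{(p,q)}))=\lambda^k D(C_{(p,q)})$ for every $k>0$ such that $D(f^{k-1}(C_{(p,q)}))\leq\xi$. Substituting the base-case identity $\lambda D(C_{(p,q)})=D(f(C_{(p,q)}))$ into this formula rewrites it in terms of $D(f(C_{(p,q)}))$, producing the stated relation. There is no real obstacle here beyond careful bookkeeping of the index shift between $C_{(p,q)}$ and $f(C_{(p,q)})$; the argument is essentially a one-line application of self-similarity to trigger the previous proposition, with the inequality hypothesis serving only to rule out the wrong branch of the maximum.
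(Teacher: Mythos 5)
Your proposal matches the paper's own proof essentially verbatim: take $k=1$, use self-similarity together with the hypothesis $\lambda D(C_{(p,q)})\neq D(f^{-1}(C_{(p,q)}))$ to conclude $D(f(C_{(p,q)}))=\lambda D(C_{(p,q)})$, and then invoke the preceding proposition. The remaining index bookkeeping (the previous proposition literally yields $D(f^k(C_{(p,q)}))=\lambda^{k}D(C_{(p,q)})=\lambda^{k-1}D(f(C_{(p,q)}))$) is glossed over in the paper exactly as in your write-up, so your argument is as complete as the published one.
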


\begin{proof}
Suppose that $k=1$.  If $\lambda D(C_{[p,q]})\neq D(f^{-1}(C_{[p,q]}))$, then by self-similarity we have $$\lambda D((C_{[p,q]}))= D(f(C_{[p,q]})).$$
Now we just need to apply the previous Proposition.
\end{proof}

\begin{proposition}\label{teocontrcw}
Let $f$ be a $cw$-expansive homeomorphim with a self-similar $cw$-metric $D$. \begin{itemize}
    \item If $C\subset \mathcal{C}^s_{\xi}$, then $D(f^k(C_{[p,q]}))=\lambda^{-k} D(C_{[p,q]})$ for every $k\geq0$.
    \item If $C\subset \mathcal{C}^u_{\xi}$, then $D(f^{-k}(C_{[p,q]}))=\lambda^{-k} D(C_{[p,q]})$ for every $k\geq0$.
    
    \end{itemize}
\end{proposition}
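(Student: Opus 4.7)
The two statements are symmetric (the second follows from the first applied to $f^{-1}$), so the plan is to focus on the stable case and show $D(f^k(C_{(p,q)}))=\lambda^{-k}D(C_{(p,q)})$ for $C\in\mathcal{C}^s_\xi$. The strategy is to combine the self-similarity identity with the first of the two preceding propositions: first rule out the possibility of forward $\lambda$-expansion, and then let self-similarity force forward contraction by exactly $\lambda^{-1}$.

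The first step is to observe that if $C\in\mathcal{C}^s_\xi$, then every forward iterate $f^n(C)$ is again stable, and by the compatibility between $D$ and $\diam$ (together with the hyperbolicity of $D$), after shrinking $\xi$ if necessary one may assume $D(f^n(C_{(p,q)}))\leq\xi$ for every $n\geq 0$, so that self-similarity is applicable at each iterate. Assuming $D(C_{(p,q)})>0$ (the singleton case being immediate), suppose for contradiction that $D(f(C_{(p,q)}))=\lambda D(C_{(p,q)})$. The first previous proposition would then give $D(f^k(C_{(p,q)}))=\lambda^k D(C_{(p,q)})$ for every $k\geq 1$ with $D(f^{k-1}(C_{(p,q)}))\leq\xi$, and since $\lambda>1$ this quantity eventually exceeds $\xi$, contradicting the uniform bound just established. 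Hence $D(f(C_{(p,q)}))<\lambda D(C_{(p,q)})$, and the self-similarity identity
$$\max\{D(f(C_{(p,q)})),D(f^{-1}(C_{(p,q)}))\}=\lambda D(C_{(p,q)})$$
forces $D(f^{-1}(C_{(p,q)}))=\lambda D(C_{(p,q)})$.

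Applying the same reasoning to the stable continuum $f^n(C)$, for each $n\geq 1$, one obtains $D(f^{n-1}(C_{(p,q)}))=\lambda D(f^n(C_{(p,q)}))$, that is,
$$D(f^n(C_{(p,q)}))=\lambda^{-1}D(f^{n-1}(C_{(p,q)})).$$
An immediate induction then gives $D(f^k(C_{(p,q)}))=\lambda^{-k}D(C_{(p,q)})$ for all $k\geq 0$, which is the desired conclusion. The unstable statement follows by applying this argument to $f^{-1}$.

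The main obstacle I foresee is the overloaded use of the symbol $\xi$: in the self-similarity property it is the cw-expansiveness constant of $D$, while in the definition of $\mathcal{C}^s_\xi$ it is a diameter parameter. The key technical point is to use compatibility between $D$ and $\diam$ to reconcile these two scales, ensuring that $C\in\mathcal{C}^s_\xi$ guarantees $D(f^n(C_{(p,q)}))\leq\xi$ for every $n\geq 0$; once this is secured, the induction above proceeds without further difficulty.
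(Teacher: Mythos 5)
Your argument is correct and is essentially the paper's own route: the paper likewise argues by contradiction, deducing from a failure of the identity that $D(f^{m-1}(C_{(p,q)}))\neq\lambda D(f^{m}(C_{(p,q)}))$ for some $m$ and then invoking the preceding self-similarity propositions to produce forward $\lambda$-expansion, which is incompatible with $C$ lying in $\mathcal{C}^s_{\xi}$; you merely rederive that step inline from the first proposition plus the self-similarity identity instead of citing the second. Your explicit reconciliation, via compatibility of $D$ and $\diam$, of the two scales both called $\xi$ is a point the paper leaves implicit.
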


\begin{proof}
Suppose that $C\subset \mathcal{C}^s_{\xi}$ and $D(f^k(C_{[p,q]}))\neq \lambda^{-k} D(C_{[p,q]})$ for some $k>0$. Then there is some $m>0$ such that $D(f^{m-1}(C_{[p,q]}))\neq \lambda D(f^{m}(C_{[p,q]}))$, but by previous proposition we cannot have $C\subset \mathcal{C}^s_{\xi}$ and this is a contradiction.

\end{proof}

Now we are able to prove the main result of this section.

\begin{proof}[Proof of Theorem \ref{pseudocwN}]

Let $f$ be $cw$-hyperbolic homeomorphism with a $cw$-pseudo-metric $D$ and let $\eta>0$. Let $\lambda>1$ and $\xi>0$ be the expansion and expansiveness constants of $D$ respectively. Let $0<\epsilon\leq\xi$ such that $C^s_{\epsilon}(x)$ and $C^u_{\epsilon}(x)$ are non-trivial continuums for every $x\in M$ and fix $0<\delta<\epsilon$ such that $\pi^s_{x,y}$ and $\pi^u_{x,y}$ are well defined for every $x,y$ with respect to $\epsilon$ and $\delta$. Let $m_0>0$ be such that 
$$\frac{2}{\lambda^{m_0-1}-2}\leq \eta$$
Let $0<\beta\leq \frac{\epsilon}{\lambda^{m_0}}$ be given by the holonomy joint continuity, with $\alpha=\frac{\epsilon}{\lambda^{m_0}}$.
Take a continuum $C\subset C^s_{\epsilon}(x)$, $p,q\in C$, $p^*\in \pi_{x,y}^u(p)$, $C'\subset C^u(p)$ containing $p$ and $p^*$ such that $D(C_{(p,q)})\leq\beta$ and $D(C'_{(p,p^*)}) \leq \beta$. 
By joint continuity,  we can choose $C^*\subset C^s_{\epsilon}(y)$, $C^{**}\subset C^u_{\epsilon}(q)$ and $q^*\in C^*\cap C^{**}$ such that $$D(C^*_{(p^*,q^*)})\leq \frac{\epsilon}{\lambda^{m_0}} \textrm{ and } D(C^{**}_{(q,q^*)})\leq \frac{\epsilon}{\lambda^{m_0}}.$$
In order to conclude the joint pseudo-isometry property for $f$, we need to obtain the two estimates in Definition \ref{pseudo}. We now proceed to obtain the former.  The triangle inequality gives us the following estimates 
$$D(f^{n}(C_{(p,q)}))\leq D(f^{n}(C'_{(p,p^*)}))+D(f^{n}(C^*_{(p^*,q^*)}))+D(f^{n}(C^{**}_{(q^*,q)}))$$
\begin{center}and
\end{center}
$$D(f^{n}(C^*_{(p^*,q^*)}))\leq D(f^{n}(C'_{(p,p^*)}))+D(f^{n}(C_{(p,q)}))+D(f^{n}(C^{**}_{(q^*,q)})),$$
and combining above inequalities one has 
$$|D(f^{n}(C_{(p,q)}))-D(f^{n}(C^*_{(p^*,q^*)})|\leq D(f^{n}(C'_{(p,p^*)}))+D(f^{n}(C^{**}_{(q^*,q)}))$$
for every $n\in \Z$. 
This implies 
$$\max\{|D(f^{n}(C_{(p,q)}))-D(f^{n}(C^*_{(p^*,q^*)}))|,|D(f^{n}(C'_{(p,p^*)}))-D(f^{n}(C^{**}_{(q,q^*)}))|\}\leq\frac{2\epsilon}{\lambda^n}$$ for every $n\in \Z$.
By joint continuity, we can fix some $m>m_0$ such that 
$$\frac{\epsilon}{\lambda^{m-1}}\leq 2\max\{D(C{(p,q)}),D(C^*_{(p^*,q^*)}),D(C'{(p,p^*)}),D(C^{**}_{(q,q^*)})\}\leq\frac{\epsilon}{\lambda^{m}}.$$
Proposition \ref{teocontrcw} ensures 
\begin{eqnarray*}
|D(C_{(p,q)})-D(C^*_{(p^*,q^*)})|=\frac{1}{\lambda^{m}}|D(f^m(C_{(p,q)}))-D(f^m(C^*_{(p^*,q^*)}))|\leq\\\leq \frac{1}{\lambda^{m}} 2\max\{D(f^{m}(C_{(p,q)})),D(f^{m}(C^*_{(p^*,q^*)}))\}\leq\frac{2\epsilon}{\lambda^{2m}}
\end{eqnarray*}
and therefore 
\begin{equation}\label{pi}
|D(C_{(p,q)})-D(C^*_{(p^*,q^*)})|\leq\frac{2\max\{D(C_{(p,q)}),D(C^*_{(p^*,q^*)})\}}{\lambda^{m-1}},
\end{equation}
by the choice of $m$.
Suppose $\max\{D(C_{(p,q)}),D(C^*_{p^*,q^*})\}=D(C_{(p,q)})$. Then  (\ref{pi}) gives us 
$$\left|\frac{D(C^*_{(p^*,q^*)})}{D(C_{(p,q)})}-1\right| =\frac{|D(C_{(p,q)})-D(C^*_{(p^*,q^*)})|}{D(C_{(p,q)})}
  \leq\frac{2}{\lambda^{m-1}}<\frac{2}{\lambda^{m-1}-2}\leq\eta.$$
On the other hand, if  $\max\{D(C_{(p,q)}),D(C^*_{p^*,q^*})\}=D(C^*_{(p^*,q^*)})$, we have
$$\left|\frac{D(C_{(p,q)})}{D(C^*_{(p^*,q^*)})}-1\right| =\frac{|D(C_{(p,q)})-D(C^*_{(p^*,q^*)})|}{D(C^*_{(p^*,q^*)})}
  \leq\frac{2}{\lambda^{m-1}}.$$
Applying Lemma 4.2 in \cite{Ar} in the above inequality we obtain 
$$\left|\frac{D(C^*_{(p^*,q^*)})}{D(C_{(p,q)})}-1\right|\leq\frac{2}{\lambda^{m-1}-2}\leq \eta.$$
To obtain the second estimate in Definition \ref{pseudo}, we begin by obtaining, in an analogous way, the following estimate: 
$$|D(f^{n}(C'_{(p,p^*)}))-D(f^{n}(C^{**}_{(q,q^*)})|\leq D(f^{n}(C_{(p^*,q^*)}))+D(f^{n}(C^{*}_{(p^*,q^*)})).$$
Now by repeating the previous computations and by using the joint continuity we analogously obtain:     
$$\left|\frac{D(C^{**}_{(q,q^*)})}{D(C'_{(p,p^*)})}-1\right|\leq\frac{2}{\lambda^{m-1}-2}\leq\eta$$
and this  concludes the proof.
\end{proof}
  
\vspace{+0.6cm}

\section{Cw-hyperbolicity on Surfaces}

In this section we prove the joint continuity on stable/unstable holonomies in the case of cw-hyperbolic surface homeomorphisms $f\colon S\to S$. In this scenario we have more information on the structure of local stable/unstable continua that were proved in \cite{ACS}. The beginning of this section is devoted to explain the main contributions there. First, we observe that the assumption of a finite number of intersections between any pair of local stable/unstable continua is important for the results obtained in \cite{ACS}. This hypothesis is called cw$_F$-expansiveness and is defined as follows.

\begin{definition}\label{cwF}
A $cw$-expansive homeomorphism is said to be $cw_F$-expansive if there exists $c>0$ such that $$\#(C^s_c(x)\cap C^u_c(x))<\infty \quad \text{ for every } \quad x\in X.$$
Analogously, $f$ is said to be $cw_N$-expansive if there is $c>0$ such that $$\#(C^s_c(x)\cap C^u_c(x))\leq N \quad \text{ for every } \quad x\in X.$$   The $cw$-hyperbolic homeomorphisms that are $cw_F$-expansive (resp.\ $cw_N$-expansive) are called $cw_F$-hyperbolic (resp.\ $cw_N$-hyperbolic).
\end{definition}

This hypothesis is satisfied in the known examples of cw-hyperbolic surface homeomorphisms. The first result in \cite{ACS} is that for $cw_F$-hyperbolic surface homeomorphisms, there exists $\eps>0$ such that $C^s_{\eps}(x)$ and $C^u_{\eps}(x)$ are arcs, that is, homeomorphic to $[0,1]$ (see Proposition 2.12 in \cite{ACS}). In the case there is a homeomorphism $h\colon [0,1]\to C^s_{\eps}(x)$ such that $h(0)=x$, we say that $x$ is a spine. Let $\espinha(f)$ denote the set of all spines of $f$. The existence of spines is, indeed, an important feature of cw-hyperbolic surface homeomorphisms and is best illustrated in the pseudo-Anosov diffeomorphism of $\mathbb{S}^2$ (see Remark \ref{pseudo}). The second result in \cite{ACS} relates spines and bi-asymptotic sectors (discs bounded by a stable and an unstable arc) in a way that every regular sector contains a single spine and every spine is contained in a regular bi-asymptotic sector (see Proposition 3.6 and Lemma 3.7 in \cite{ACS}). The regular bi-asymptotic sectors are defined as being bounded by stable/unstable arcs with intersections pointing outward the sector (see Definition 3.1 in \cite{ACS} for a precise definition). The next figure is in \cite{ACS} and illustrates the difference between a regular and a non-regular sector.
 \begin{figure}[H]
        \begin{subfigure}[h]{0.49\linewidth}
            \centering
            \includegraphics[width=0.76\textwidth]{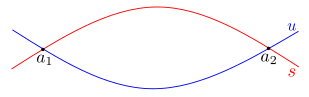}
            \caption{Regular sector}
            \label{fig:setorantigo}
        \end{subfigure}
        \hfill        
        \begin{subfigure}[h]{0.49\linewidth}
            \centering
            \includegraphics[width=0.76\textwidth]{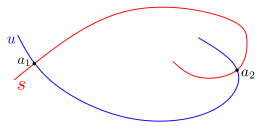}
            \caption{Non-regular sector}
            \label{fig:setorbugado1}
        \end{subfigure}
        \caption{}
    \end{figure}
A complete description of the structure of local stable and local unstable continua inside a regular bi-asymptotic sector is given in \cite{ACS}. There is a single spine in the interior of the sector and all other stable/unstable arcs intersect the boundary of the sector in two points turning around the spine (see Section 3 in \cite{ACS}). In the proof, a continuity for the variation of the stable/unstable arcs inside a regular bi-asymptotic sector is proved (see Lemma 3.5 in \cite{ACS}). We prove that inside a regular sector the joint continuity holds.

\begin{lemma}\label{continuitysector}
The holonomy maps $\pi_{x,y}^s$ and $\pi_{x,y}^u$ are jointly continuous
inside a regular bi-asymptotic sector.
\end{lemma}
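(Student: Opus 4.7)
The plan is to exploit the explicit description, given in Section 3 of \cite{ACS}, of the local stable and local unstable arcs inside a regular bi-asymptotic sector $R$. Recall that $R$ is a topological disc bounded by one local stable arc and one local unstable arc, contains a unique spine $s$ in its interior, and every point of $R\setminus\{s\}$ lies on a local stable (respectively unstable) arc that crosses $R$ and meets its boundary in exactly two points, turning around $s$ in a controlled, monotone way.

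First, I would invoke the continuous variation of stable and unstable arcs inside $R$ established in Lemma 3.5 of \cite{ACS}: the assignments $z\mapsto C^s_\eps(z)\cap R$ and $z\mapsto C^u_\eps(z)\cap R$ are continuous in the Hausdorff metric on $\mathcal{K}(X)$. Combined with cw$_F$-expansiveness, any transverse stable and unstable arc inside $R$ meet in a non-empty finite set, and the regularity of $R$ singles out a distinguished intersection point, namely the one lying in the same cell of the foliation-like partition of $R$ determined by the spine as the given data.

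Next, I would interpret the hypothesis of Definition \ref{cw-cont} geometrically inside $R$: given $x,y\in R$ with $d(x,y)<\delta$, a stable subarc $C$ joining $p$ to $q$ of small $D$-size and an unstable subarc $C'$ joining $p$ to $p^*$ of small $D$-size, the candidate $q^*$ is defined as the distinguished intersection of the local stable arc through $p^*$ (which lies in $C^s_\eps(y)$ since $p^*\in C^s_\eps(y)$) with the local unstable arc through $q$, chosen so that $q^*$ lies in the same cell of the foliation-like decomposition of $R$ as $p^*$ and $q$. Setting $C^*$ and $C^{**}$ to be the corresponding subarcs of $C^s_\eps(y)$ and $C^u_\eps(q)$, joint continuity reduces to proving that $\diam(C^*)\to 0$ and $\diam(C^{**})\to 0$ as $D(C_{(p,q)})\to 0$ and $D(C'_{(p,p^*)})\to 0$; the compatibility between $D$ and $\diam$ in Theorem \ref{teoCwHyp}(3) then translates this into the required $D$-estimates. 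By compatibility, $q\to p$ and $p^*\to p$, and by the continuous variation of arcs the stable arc through $p^*$ converges in Hausdorff distance to the stable arc through $p$ while the unstable arc through $q$ converges to the unstable arc through $p$; since these two limit arcs meet near $p$ only at $p$ itself, the distinguished intersection $q^*$ must converge to $p$, forcing $\diam(C^*)\to 0$ and $\diam(C^{**})\to 0$.

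The main obstacle is to verify that the distinguished $q^*$ is well-defined and depends continuously on the data, since cw$_F$-expansiveness only bounds the cardinality of the intersection and does not by itself select a branch; a priori, $q^*$ could jump to a farther intersection point as $p^*$ and $q$ shrink toward $p$. Ruling out such a jump requires the turning-around-the-spine description of arcs inside a regular sector from Section 3 of \cite{ACS}: that monotone picture produces a continuous choice function for the intersection on a full neighborhood of $p$, and it is precisely this use of regularity of the sector, rather than any purely metric argument, that makes the lemma hold.
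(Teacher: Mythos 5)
Your overall strategy is the same as the paper's: organize the regular sector by the spine's stable and unstable arcs, select the intersection point lying in the correct ``half'' of the sector, and convert smallness in $D$ into smallness of diameter via the compatibility in Theorem \ref{teoCwHyp}(3). However, the step you yourself flag as the main obstacle --- producing a well-defined, continuous selection of $q^*$ --- is exactly where the lemma's content lies, and your prescription for it does not go through as stated. You define $q^*$ as the intersection ``lying in the same cell of the foliation-like decomposition as $p^*$ and $q$'', but $p^*$ and $q$ (or $p$ and $q$) need not lie in the same cell: the curve $C^s_R(w)\cup C^u_R(w)$ through the spine $w$ splits $R$ into two regions $R_1,R_2$, and the data can straddle this curve even while shrinking to a single point on it, so ``the same cell as $p^*$ and $q$'' is undefined precisely in the delicate case. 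The paper's proof handles this by parametrizing each half with coordinate maps $f_1(t,s)=C^u_{R_1}(\gamma^s(t))\cap C^s_{R_1}(\gamma^u(s))$ and $f_2(t,s)=C^u_{R_2}(\gamma^s(t))\cap C^s_{R_2}(\gamma^u(s))$, proving their continuity via monotonicity (using the total orderings of the arc families from \cite{ACS}), and then defining $q^*$ through a three-case analysis that includes an explicit transfer across the dividing curve, $q'=f_2^{-1}\bigl(f_1(t_q,\tfrac12)\bigr)$, when $q$ and $p^*$ fall in different regions. Nothing in your sketch supplies this case analysis or the transfer.

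Relatedly, your convergence argument is not by itself conclusive: Hausdorff continuity of $z\mapsto C^s_\eps(z)\cap R$ together with cw$_F$-expansiveness only yields that accumulation points of the chosen $q^*$ lie in the finite set $C^s_\eps(p)\cap C^u_\eps(p)$, which near a spine may contain points other than $p$ (Remark \ref{pseudo-anosov}); so ``the limit arcs meet near $p$ only at $p$'' does not force $q^*\to p$ unless the branch selection is already known to be coherent. You correctly identify this branch-jumping issue, but asserting that the turning-around-the-spine picture produces a continuous choice function on a neighborhood of $p$ is precisely what must be proved; in the paper this is the continuity of $f_1$ and $f_2$, established coordinate-by-coordinate through monotonicity. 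As written, the proposal has the right skeleton but leaves the essential construction --- the well-defined selection of $q^*$ in both the same-region and straddling cases, and its continuity --- unproved.
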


\begin{proof}

We begin this proof parametrizing the whole regular sector $R$ using the stable arc $\gamma^s$ and an unstable arc $\gamma^u$ bounding it. Let $w$ be the spine in the interior of $R$, and let $z$ and $z'$ be the end points of $\gamma^s$ and $\gamma^u$. For each $p\in R$, let $C^u_R(p)$ and $C^s_R(p)$ be the connected components of $C^u(p)\cap R$ and $C^s(p)\cap R$ containing $p$, respectively. The curve $C=C^s_R(w)\cup C^u_R(w)$ splits $R$ in two regions $R_1$ and $R_2$. For any two points $p,q\in \inte(R)\setminus C$ the intersection $C^s_R(p)\cap C^u_R(q)$ contains exactly one point in $R_1$ and one point in $R_2$ (see Figure \ref{setor}). We denote by $C^u_{R_1}(p)$ and $C^u_{R_2}(p)$ the connected components of $C^u(p)\cap R_1$ and $C^u(p)\cap R_2$ containing $p$, respectively, and by $C^s_{R_1}(p)$ and $C^s_{R_2}(p)$ the connected components of $C^s(p)\cap R_1$ and $C^s(p)\cap R_2$ containing $p$, respectively.
\begin{figure}[h]
    \centering
    \includegraphics[scale=0.5]{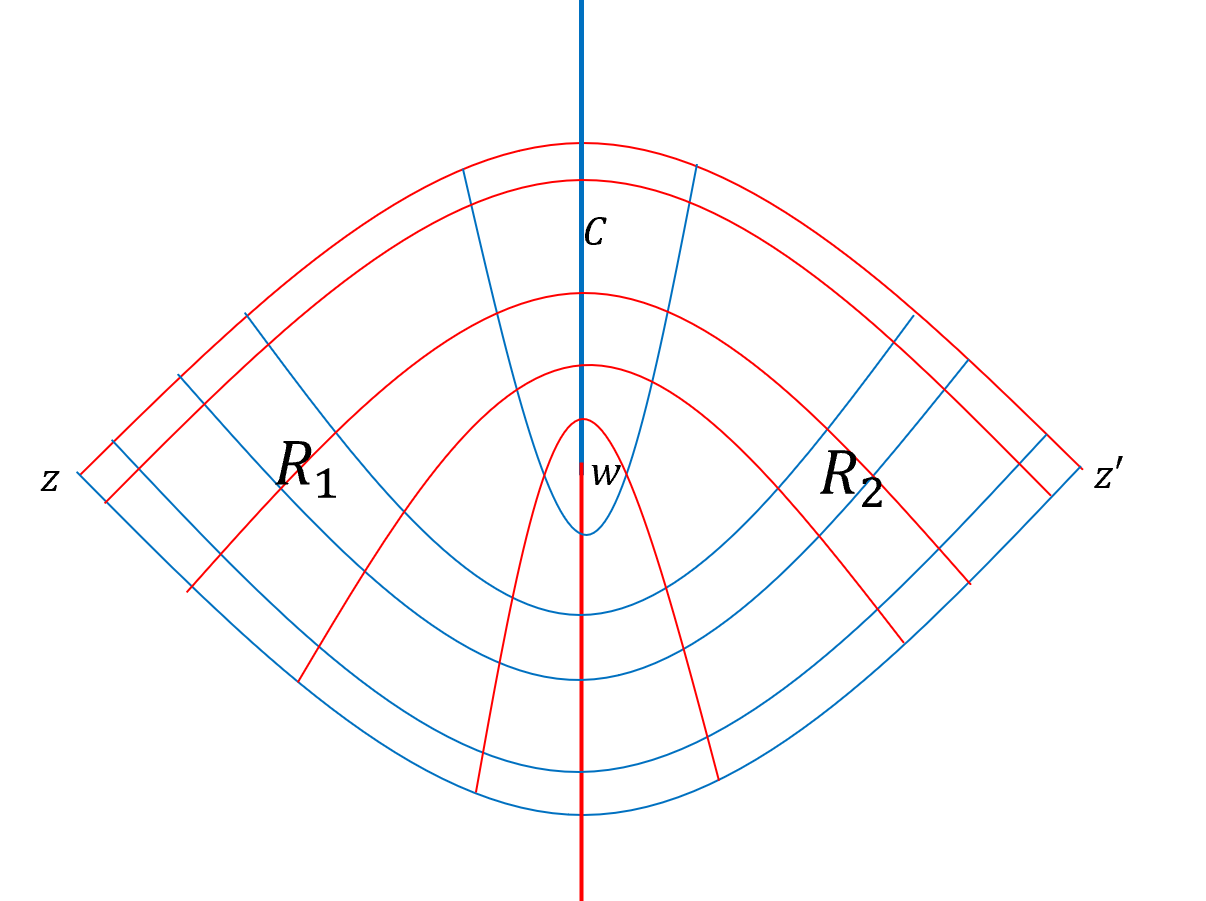}
    \caption{Portrait of the local stable and unstable sets inside a regular bi-asymptotic sector.}
    \label{setor}
\end{figure}
Reparametrizing, if necessary, we can assume that $\gamma^s,\gamma^u:[0,1]\to M$ satisfy 
$$\gamma^s(0)=\gamma^u(0)=z, \,\,\,\,\,\, \gamma^s(1)=\gamma^u(1)=z', \,\,\,\,\,\, \gamma^s\left(\frac{1}{2}\right)=\gamma^s\cap C,$$ 
$$\textrm{ and } \,\,\,\,\,\, \gamma^u\left(\frac{1}{2}\right)=\gamma^u\cap C.$$  
Define $f_1:[0,\frac{1}{2}]\times [0,\frac{1}{2}]\to R_1$ and $f_2:[\frac{1}{2},1]\times[\frac{1}{2},1]\to R_2$ by $$f_1(t,s)=C^u_{R_1}(\gamma^s(t))\cap C^s_{R_1}(\gamma^u(s)) \,\,\,\,\,\, \text{and}$$ 
$$f_2(t,s)=C^u_{R_2}(\gamma^s(t))\cap C^s_{R_2}(\gamma^u(s)).$$
We claim the maps $f_1$ and $f_2$ are continuous. Indeed, this is a consequence of the techniques developed in \cite{ACS}. We will briefly explain how to adapt their ideas to our context. To prove that $f_1$ is continuous it is enough to prove that it is continuous on each of its coordinates. 
Notice that if we fix $s_0$, then $f_1(t,s_0)$ maps $[0,\frac{1}{2}]$ injectively onto $C^s_{R_1}(\gamma^u(s_0))$.   In particular, it can be seen as a bijective map from $[0,\frac{1}{2}]$ onto $[0,\frac{1}{2}]$. Therefore, to prove that $f_1(t,s_0)$ is continuous, we just need to prove that it is monotonic. In \cite{ACS} it is proved that the sets of arcs $$\{C^s_{R}(p); p\in R\} \textrm{ and } \{C^u_{R}(p); p\in R\} $$ are totally ordered. With those orders, we can respectively induce total orderings on the sets of arcs $$\{C^s_{R_1}(p); p\in R_1\} \textrm{ and } \{C^u_{R_1}(p); p\in R_1\}.$$ Now the proof of the monotonicity of $f_1(t,s_0)$ follows exactly as the proof of Lemma 3.5 in \cite{ACS}. Analogously, we can prove that $f_1(t_0,s)$ is continuous for every $t_0\in [0,\frac{1}{2}]$ and  this implies the continuity of $f_1$. Analogously, we obtain the continuity of $f_2$ and the claim is proved.

We are finally able to conclude our proof. Let $x, y, p, q, p^*, C, C'$ be as in Definition \ref{cw-cont} and assume that they are contained in the interior of $R$. By the compatibility of the cw-metric and the diameter function, if $D(C_{(p,q)})$ and $D(C_{(p,p^*)})$ are small, then the distances $d(p,q)$ and $d(p,p^*)$ are also small. Now observe that $p,q,p^*$ can be seen as images of the maps $f_1$ and $f_2$. Next we split the choice of $q^*$ in two cases.

\vspace{0.1in}
\textbf{Case 1:} Suppose there is $i=1,2$ such that  $q,p^*$ are both in $R^i$. In this case, there are $t_q,s_q,t_{p^*},s_{p^*}\geq 0$ such that $q=f_i(t_q,s_q)$ and $p^*=f_i(t_{p^*},s_{p^*})$. Then we define $q^*=f_i(t_q,s_{q^*})$.

\vspace{0.1in}
\textbf{Case 2:} Suppose such that  $q$ and $p^*$ are not both in the same region. We will only prove the case when $q\in R_1$ and $p^*\in R_2$, since the reverse case is analogous.  In this case, there are $t_q,s_q\in [0,\frac{1}{2}]$ and $,t_{p^*},s_{p^*}\in [\frac{1}{2},1]$ such that $q=f_1(t_q,s_q)$ and $p^*=f_2(t_{p^*},s_{p^*})$. Let $q'=f^{-1}_2(f_1(t_q,\frac{1}{2}))$.  In this way, there is $t_{q'}\in [\frac{1}{2}, 1]$ such that $q'=f_2(t_{q'},\frac{1}{2})$. Then define $q^*=f_2(t_{q'},s_{p^*})$.

\vspace{0.1in}
\textbf{Case 3} Suppose that $p$ and $q$ are in distinct regions. In this case, proceed similarly as in the case 2, to choose $q^*$ in the same region as $q$ (see Figure \ref{cases}).

\begin{figure}[h]
    \centering
    \includegraphics[scale=0.6]{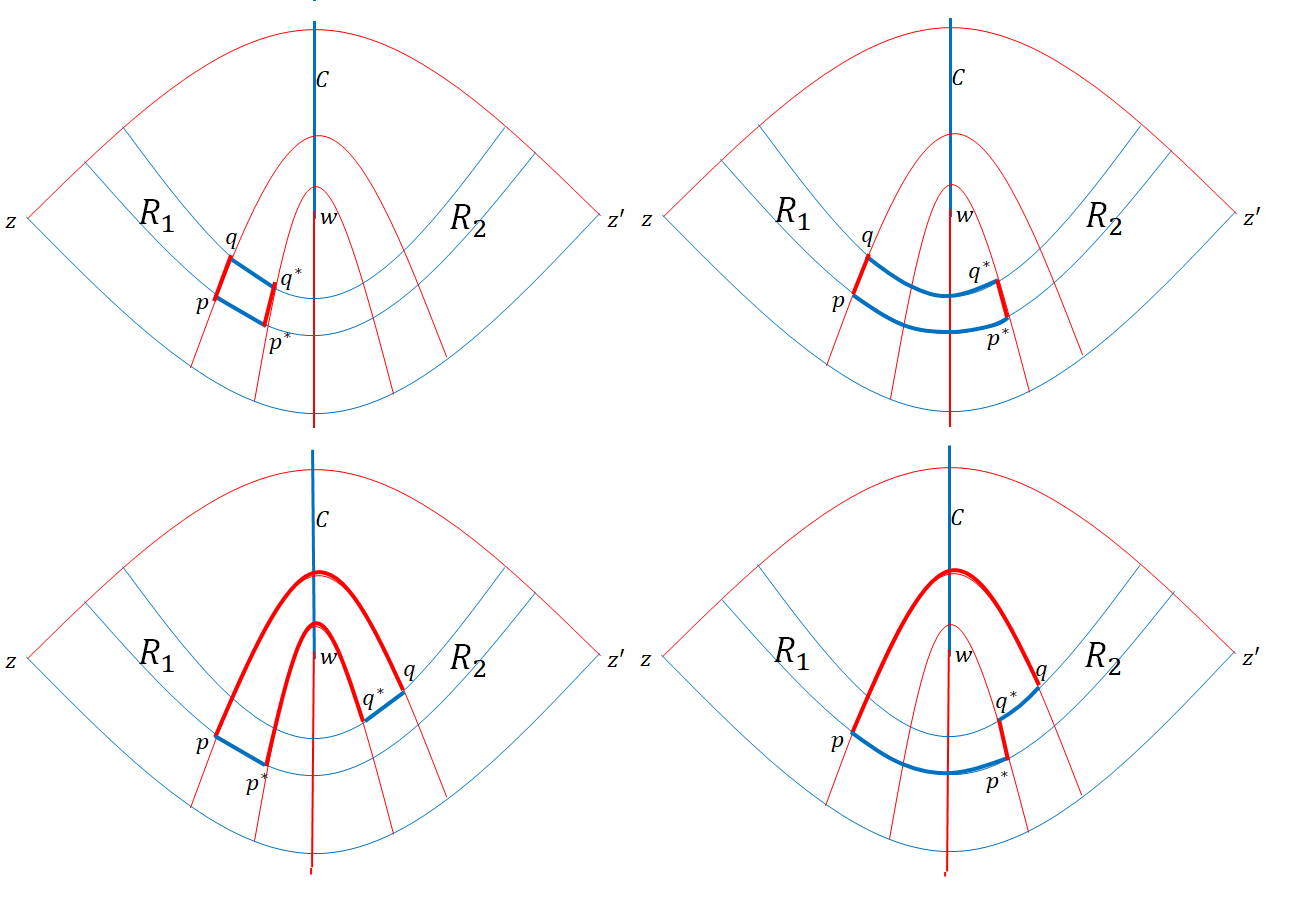}
    \caption{The Possibilities in Cases 1, 2 and 3.}
    \label{cases}
\end{figure}

By the choice of the parameters $t,s$ we conclude that the point  $q^*$  belongs simultaneously to  $\pi_{x,y}^u(z)$ and $\pi_{p,q}^s(p^*)$. 
Finally, the joint continuity follows from the continuity of the maps $f_1$ and $f_2$ and the compatibility of  the $cw$-metric $D$ with diameter function of $M$. 

\end{proof}

Having proved joint continuity inside regular bi-asymptotic sectors, we proceed to the proof of joint continuity as in Definition \ref{cw-cont}. The following lemma and proposition will be necessary for that. In the lemma we use the cw-local product structure to obtain intersections between local stable/unstable continua that become close to each other in a region far from the original point.

\begin{lemma}\label{LPS}
For each $p.q\in X$ and $\kappa\in(0,\frac{d(p,q)}{2})$, there exists $\eta\in(0,\kappa)$ satisfying: if $C^s_{\eps}(p)\cap B_{\eta}(q)\neq \emptyset$ and $C^u_{\eps}(p)\cap B_{\eta}(q)\neq \emptyset$, then there are $x\in C^s_{\eps}(p)\cap B_{\eta}(q)$ and $y\in C^u_{\eps}(p)\cap B_{\eta}(q)$ such that $C^s_{\kappa}(x)\cap C^u_{\kappa}(y)\neq\emptyset$.

\end{lemma}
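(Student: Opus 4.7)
My plan is to prove Lemma \ref{LPS} as an almost immediate consequence of the cw-local-product-structure applied at scale $\kappa$. The role of the hypothesis $\kappa<d(p,q)/2$ is only to keep the conclusion geometrically meaningful (so that the $\kappa$-balls around points near $q$ stay away from $p$); it plays no essential part in the argument.

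First I would invoke the cw-local-product-structure with $\eps$-parameter equal to $\kappa$. This yields a number $\delta_\kappa>0$ with the property that
\[
C^s_{\kappa}(u)\cap C^u_{\kappa}(v)\neq\emptyset \quad\text{whenever}\quad d(u,v)<\delta_\kappa.
\]
Then I would set
\[
\eta=\tfrac{1}{2}\min\{\kappa,\delta_\kappa\},
\]
so that in particular $\eta\in(0,\kappa)$, as required by the statement.

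Next, assuming the two non-emptiness hypotheses, I pick any $x\in C^s_\eps(p)\cap B_\eta(q)$ and any $y\in C^u_\eps(p)\cap B_\eta(q)$. By the triangle inequality,
\[
d(x,y)\leq d(x,q)+d(q,y)<2\eta\leq \delta_\kappa,
\]
and the cw-local-product-structure applied to the pair $(x,y)$ gives $C^s_{\kappa}(x)\cap C^u_{\kappa}(y)\neq\emptyset$, which is exactly the desired conclusion.

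There is essentially no obstacle here: the lemma is a direct bookkeeping consequence of the cw-local-product-structure, and the only care needed is to choose $\eta$ small enough to satisfy both $\eta<\kappa$ (so that $\eta\in(0,\kappa)$) and $2\eta<\delta_\kappa$ (so that the triangle inequality places $x$ and $y$ within the cw-LPS range). The presumed use of this lemma in the sequel is that, starting from the merely topological information that $C^s_\eps(p)$ and $C^u_\eps(p)$ both come arbitrarily close to $q$, one can extract genuine local rectangle structure near $q$; the present lemma is the elementary bridge that supplies this.
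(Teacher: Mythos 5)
Your proof is correct and follows essentially the same route as the paper: both apply the cw-local-product-structure at scale $\kappa$ to obtain a threshold, choose $\eta$ at most half that threshold (and below $\kappa$), and use the triangle inequality through $q$ to place any $x$ and $y$ within the cw-LPS range. No issues.
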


\begin{proof}
Let $\eta'\in(0,\frac{\kappa}{2})$ be given by the $cw$-local product structure with respect to $\kappa$ and consider $\eta=\frac{\eta'}{2}$. Now, suppose $C^s_{\eps}(p)\cap B_{\eta}(q)\neq \emptyset$ and $C^u_{\eps}(p)\cap B_{\eta}(q)\neq \emptyset$ and consider $x\in C^s_{\eps}(p)\cap B_{\eta}(q)$ and $y\in C^u_{\eps}(p)\cap B_{\eta}(q)$. Thus, $d(x,y)\leq \eta'$ and the $cw$-local product structure implies $C^s_{\kappa}(x)\cap C^u_{\kappa}(y)\neq\emptyset$. 

\end{proof}

\begin{remark}
We observe that any point in the intersection $C^s_{\kappa}(x)\cap C^u_{\kappa}(y)$ obtained in the previous lemma is different from $p$. 
\end{remark}

\begin{proposition}\label{setormaior}
If all bi-asymptotic sectors of a cw$_F$-hyperbolic surface homeomorphism are regular, then every bi-asymptotic sector is contained in the interior of another bi-asymptotic sector.
\end{proposition}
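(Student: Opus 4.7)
The plan is to enlarge a given bi-asymptotic sector $R$ by extending its bounding arcs slightly past each corner. Write $z$ and $z'$ for the corners of $R$, $\gamma^s\subset C^s_\eps(z)$ for its bounding stable arc, and $\gamma^u\subset C^u_\eps(z)$ for its bounding unstable arc. Since $f$ is cw$_F$-hyperbolic on a surface, Proposition~2.12 of \cite{ACS} ensures that local stable and unstable continua are arcs, so $\gamma^s$ and $\gamma^u$ admit extensions $\tilde\gamma^s\supset\gamma^s$ and $\tilde\gamma^u\supset\gamma^u$ just past each of $z$ and $z'$, still lying inside $C^s_\eps(z)$ and $C^u_\eps(z)$ respectively. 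The regularity of $R$ means that at each corner the stable and unstable arcs have intersections pointing outward, so locally they divide a neighborhood of the corner into regions, of which $R$ is one, and the extensions $\tilde\gamma^s,\tilde\gamma^u$ leave $R$ into the opposite region.

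At the corner $z$, I would apply Lemma~\ref{LPS} with $p=z$ and an auxiliary point $q$ chosen in the region opposite to $R$, at positive distance from $z$ and close enough that both $C^s_\eps(z)\cap B_\eta(q)\neq\emptyset$ and $C^u_\eps(z)\cap B_\eta(q)\neq\emptyset$ via the extensions $\tilde\gamma^s$ and $\tilde\gamma^u$. The lemma, together with the remark following it, produces points $x\in C^s_\eps(z)\cap B_\eta(q)$, $y\in C^u_\eps(z)\cap B_\eta(q)$ and a new point $w_z\in C^s_\kappa(x)\cap C^u_\kappa(y)$ with $w_z\neq z$. Regularity and the placement of $q$ ensure that $w_z$ lies outside of $R$, on the far side of $z$. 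The same construction at $z'$ produces a point $w_{z'}$ outside of $R$.

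Next I would concatenate the short stable arc joining $w_z$ to $x$ (inside $C^s_\kappa(x)$) with the extended $\tilde\gamma^s$ and with the analogous short arc at $z'$, obtaining a single stable arc $\Gamma^s$ from $w_z$ to $w_{z'}$ that strictly contains $\gamma^s$ in its relative interior. The analogous construction on the unstable side yields an unstable arc $\Gamma^u$ from $w_z$ to $w_{z'}$ strictly containing $\gamma^u$. Then $\Gamma^s\cup\Gamma^u$ bounds a topological disc $R'$ with corners $w_z$ and $w_{z'}$, and by construction $R\subset\interior R'$.

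The main obstacle is the last step: verifying that $R'$ is genuinely a bi-asymptotic sector, i.e.\ that $\Gamma^s$ and $\Gamma^u$ meet only at the two corners $w_z,w_{z'}$ and at no interior point. Here the global hypothesis that every bi-asymptotic sector is regular is essential: a spurious interior intersection of $\Gamma^s$ and $\Gamma^u$ would carve out a smaller bi-asymptotic sub-sector inside $R'$, and I would use the total ordering of local stable and unstable arcs inside a regular sector established in Section~3 of \cite{ACS} to exclude such configurations, shrinking the extensions past $z$ and $z'$ if necessary so that no new crossings occur before the new corners $w_z$ and $w_{z'}$ are reached.
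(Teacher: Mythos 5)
Your proposal has two genuine gaps, and they are exactly the points where the paper's proof does its real work. First, the claim that the bounding arcs admit extensions $\tilde\gamma^s,\tilde\gamma^u$ ``just past'' the corners is unjustified: Proposition 2.12 of \cite{ACS} only says that local stable/unstable continua are arcs, not that a corner lies in the \emph{interior} of its local stable (resp.\ unstable) arc. If a corner is a spine, no such extension exists, and ruling out spines at the corners is the main content of the paper's argument: assuming a corner, say $a_1$, is a spine, one takes long bi-asymptotic sectors accumulating on $a_1$ whose bounding arcs follow $a^s$ and $a^u$ past the second corner $a_2$, and from their intersections near $a_1$ and near $a_2$ one extracts a bi-asymptotic sector with a non-regular intersection, contradicting the standing hypothesis that \emph{all} sectors are regular. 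In your write-up the regularity-of-all-sectors hypothesis is only invoked at the very end, to exclude spurious crossings of $\Gamma^s$ and $\Gamma^u$, so the spine obstruction to the very first step is left unaddressed.

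Second, even granting the extensions, your larger region $R'$ is bounded by $\Gamma^s\cup\Gamma^u$ with $\gamma^s\subset\Gamma^s$ and $\gamma^u\subset\Gamma^u$, so the whole boundary of $R$ lies on the boundary of $R'$; you get $R\subset R'$ but not $R\subset\interior(R')$, which is what the proposition asserts and what is actually needed in the proof of Theorem \ref{cwfsurface} (there one must place points near a point of the small sector in the \emph{interior} of the big one). The paper avoids this by not extending $a^s,a^u$ at all: once the corners are known not to be spines, it takes a stable arc $\alpha^s$ and an unstable arc $\alpha^u$ through $a_1$ lying in the exterior of $D$, picks $z_1\in\alpha^s$ and $z_2\in\alpha^u$ close to $a_1$, and uses the unstable arc through $z_1$ and the stable arc through $z_2$ as the new boundary; these arcs separate a ball containing $D$, stay in the exterior of $D$, and intersect near $a_1$ by Lemma \ref{LPS}, so the resulting sector contains $D$, boundary included, in its interior. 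To repair your argument you would need both to prove the corners are not spines (using the global regularity hypothesis as above) and to replace the concatenated extensions of $\gamma^s,\gamma^u$ by bounding arcs disjoint from $\partial R$.
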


\begin{proof}
Let $D$ be a bi-asymptotic sector of $f$ bounded by a stable arc $a^s$ and an unstable arc $a^u$, and let $a_1$ and $a_2$ be the end points of these arcs. First, we prove that $a_1$ and $a_2$ cannot be spines. Indeed, if $a_1$ is a spine, then we can choose long bi-asymptotic sectors arbitrarily close to $a_1$ as is done in Lemma 3.9 of \cite{ACS} (see also the proof of Theorem 1.1 and Figure 26 there) whose stable/unstable arcs $\gamma^s$ and $\gamma^u$ follow the stable/unstable arcs of $a_1$ till after the second intersection $a_2$ (see Figure \ref{bernardo1}). Note that $a_1$ and $a_2$ cannot be spines simultaneously.

\begin{figure}[h]
    \centering
    \includegraphics[scale=0.4]{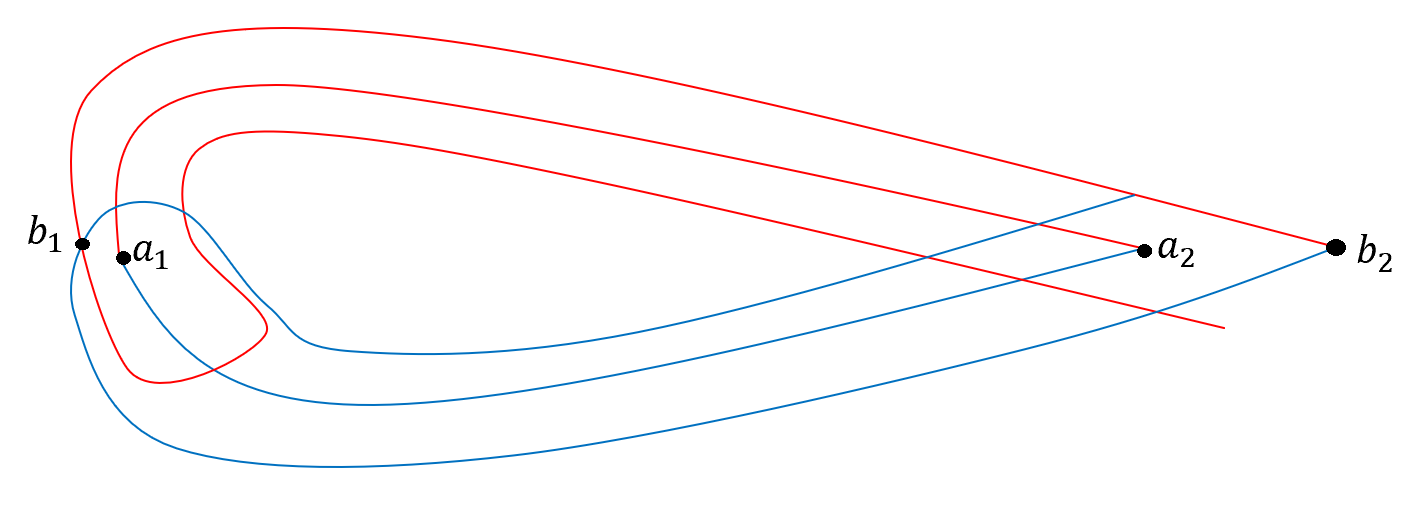}
    \caption{}
    \label{bernardo1}
\end{figure}

As indicated in this figure, we can choose an intersection $b_1\in\gamma^s\cap\gamma^u$ close to $a_1$ and another intersection $b_2\in\gamma^s\cap\gamma^u$ close to $a_2$, such that the sub-arcs of $\gamma^s$ and $\gamma^u$ connecting $b_1$ and $b_2$ form a bi-asymptotic sector with a non-regular intersection at $b_1$, contradicting the hypothesis. Thus, we can assume that both $a_1$ and $a_2$ are not spines, and, hence, there exist a stable arc $\alpha^s$ and an unstable arc $\alpha^u$ containing $a_1$ and contained in the exterior of $D$. As in the proof of Lemma 3.9 in \cite{ACS}, we can choose points $z_1\in\alpha^s$ and $z_2\in\alpha^u$ (arbitrarily close to $a_1$), an unstable arc $\beta^u$ containing $z_1$ and a stable arc $\beta^s$ containing $z_2$ such that both $\beta^u$ and $\beta^s$ separate a ball containing $D$, are contained in the exterior of $D$, and, forming a bi-asymptotic sector containing $D$ in its interior (see Figure \ref{bernardo2}). Lemma \ref{LPS} is important in this step to ensure an intersection between $\beta^s$ and $\beta^u$ close to $a_1$.

\begin{figure}[h]
    \centering
    \includegraphics[scale=0.4]{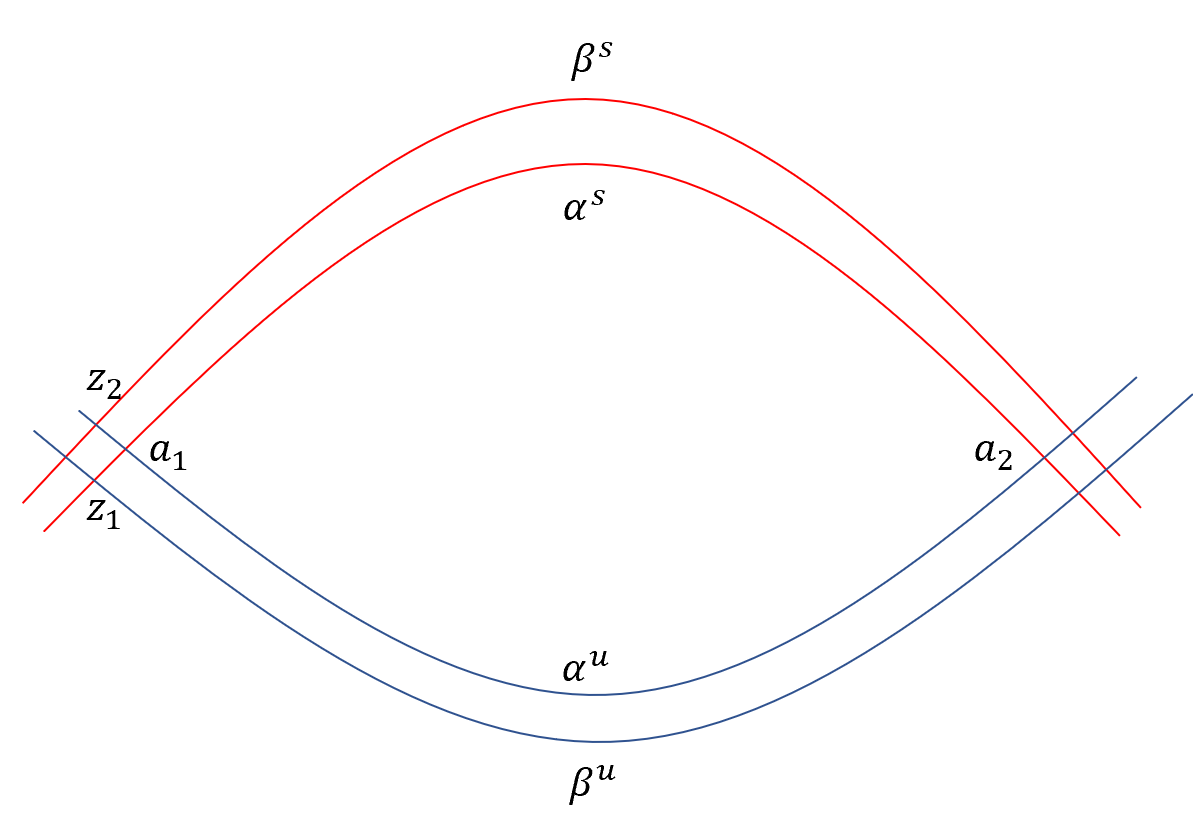}
    \caption{}
    \label{bernardo2}
\end{figure}
\end{proof}

\begin{proof}[Proof of Theorem \ref{cwfsurface}]
The proof goes by contradiction. Suppose all bi-asymptotic sectors of $f$ are regular and $f$ does not have jointly continuous stable/unstable holonomies. Then there exists $\alpha>0$ such that for each $n\in\N$ there are $x_n,y_n\in S$, $C_n\subset C^s_{\eps}(x_n)$, $p_n,q_n\in C_n$, $C'_n\subset C_{\eps}^u(p_n)$, and $p_n^*\in C'_n$ satisfying \begin{equation}\label{converg} D(C_{n(p_n,q_n)})\leq \frac{1}{n} \,\,\,\,\,\, \textrm{and} \,\,\,\,\,\, D(C'_{n(p_n,p^*_n)})\leq \frac{1}{n}
\end{equation} 
and such that for every continua $C^*\subset C^s_{\eps}(y_n)$, $C^{**}\subset C^u_{\eps}(q_n)$, and every $q_n^*\in \pi_{x_n,y_n}^s(q_n)$ we have either
\begin{equation}\label{bigholonomies}
    D(C^*_{n(p^*_n,q_n^*)})>\alpha \,\,\,\,\,\, \textrm{or} \,\,\,\,\,\, D(C^{**}_{n(q_n,q_n^*)})>\alpha.
\end{equation}
Next we will split our proof in two cases.

\textbf{Case 1:} Suppose $D(C^*_{n(p^*_n,q_n^*)})>\alpha$ for every $n\in\N$ and $D(C^{**}_{n(q_n,q_n^*)})\to 0$. Compactness of $S$ ensures, considering a subsequence if necessary, that $$p_n\to p, \,\,\, q_n\to q, \,\,\, p^*_n\to p^* \,\,\, \textrm{and} \,\,\, q^*_n\to q^*.$$ 
Since $D$ is compatible with the diameter function, inequalities (\ref{converg}) ensure that $p=q=p^*=q^*$. But since $D(C^*_{n(p^*_n,q_n^*)})>\alpha$ for every $n$, we obtain that $(C^*_{n(p^*_n,q_n^*)})_{n\in\N}$ accumulates in a non-trivial closed curve contained in $C^s_{\eps}(p^*)$. This contradicts the $cw$-expansiveness of $f$, since it generates a point with a stable set having non-empty interior. The case $D(C^*_{n(p^*_n,q_n^*)})\to 0$ and $D(C^{**}_{n(q_n,q_n^*)})>\alpha$ for every $n$ is analogous.

\textbf{Case 2:} Suppose $D(C^*_{n(p^*_n,q_n^*)})>\alpha$ for every $n>0$ but $D(C^{**}_{n(q_n,q_n^*)})$ does not converge to $0$. In this case, we can assume, possibly reducing $\alpha$ and taking sub-sequences, that 
$$D(C^*_{n(p^*_n,q_n^*)})>\alpha \,\,\,\,\,\, \text{and} \,\,\,\,\,\,  D(C^{**}_{n(q_n,q_n^*)})>\alpha \,\,\,\,\,\, \text{for every} \,\,\,\,\,\, n\in\N.$$
Compactness of $S$, compatibility of $D$ and $\diam$, and (\ref{converg}) ensure that $$p_n,q_n,p^*_n\to p.$$
Inequalities (\ref{bigholonomies}) ensure that $(C^*_{n(p^*_n,q_n^*)})_{n\in\N}$ and $(C^{**}_{n(q_n,q_n^*)})_{n\in\N}$ accumulate in a pair of non-trivial stable/unstable continua $C^s$ and $C^u$ intersecting in at least two distinct points. Recall that they intersect in at most a finite number of points since $f$ is cw$_F$-expansive. This creates one bi-asymptotic sector $D$ bounded by sub-arcs of $C^s$ and $C^u$ containing $p$, which by assumption is regular. Proposition \ref{setormaior} ensures the existence of a bi-asymptotic sector $D'$ containing $D$ in its interior. Since $p_n,q_n,p_n^*\to p$, and $p\in D$, it follows that $p_n,q_n,p^*_n\in \inte(D')$, and also $C_{n(p_n,q_n)}, C_{n(p_n,p_n^*)}\subset\inte(D')$, if $n$ is sufficiently big. Therefore, Lemma \ref{continuitysector} ensures the existence of arcs $\alpha^s_n$ and $\alpha^u_n$ contained in $D'$, containing $p_n^*$ and $q_n$, respectively, and $q_n^{**}\in \alpha^s_n\cap\alpha^u_n$ satisfying $\diam(\alpha^s_n)\to0$ and $\diam(\alpha^u_n)\to0$. This contradicts the hypothesis done at the beginning of the proof that inequalities \ref{bigholonomies} hold for every choice of $q_n^*\in \pi_{x_n,y_n}^s(q_n)$. This concludes the proof.

\end{proof}

We end this section proving that the hypothesis of regularity on all bi-asymptotic sectors holds for a class of cw-hyperbolic homeomorphisms discussed in \cite{ACS}.

\begin{proposition}\label{allregular}
If a cw$_F$-hyperbolic surface homeomorphism admits only a finite number of spines, then all its bi-asymptotic sectors are regular.
\end{proposition}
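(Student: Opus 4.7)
The plan is to argue by contradiction. Assume $f$ has only finitely many spines yet admits a non-regular bi-asymptotic sector $D$, bounded by a stable arc $a^s$ and an unstable arc $a^u$ meeting at corners $a_1,a_2$. Non-regularity at one corner, say $a_1$, means by Definition 3.1 of \cite{ACS} that one of the bounding arcs extends past $a_1$ into the interior of $D$; take this to be $a^s$. Because $f$ is cw$_F$-expansive, this extension meets $a^u$ in only finitely many points, partitioning $\inte(D)$ into finitely many topological sub-discs. Iterating the same decomposition on any non-regular sub-piece terminates in finitely many steps by cw$_F$-expansiveness and produces a regular bi-asymptotic sub-sector $D_0\subset D$; Proposition 3.6 of \cite{ACS} then supplies a unique spine $s_0\in D_0\cap\operatorname{Spin}(f)$ in its interior.

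Next I would show that the forward orbit of $D$ consists of infinitely many pairwise distinct non-regular bi-asymptotic sectors. Since $f$ maps local stable and local unstable continua to local stable and local unstable continua, $\operatorname{Spin}(f)$ is $f$-invariant, and the finiteness hypothesis forces every spine to be periodic with some common period $k$. The sector $D$ itself cannot be periodic: if $f^{k'}(D)=D$ for some $k'\geq 1$, then $f^{k'}$ preserves $a^s$ set-wise and permutes its endpoints, so along any even subsequence $n_j$ on which $f^{n_j k'}$ fixes both $a_1$ and $a_2$ the estimate (2)(a) of Theorem \ref{teoCwHyp} gives
\[
D\bigl(a^s_{(a_1,a_2)}\bigr)=D\bigl(f^{n_j k'}(a^s_{(a_1,a_2)})\bigr)\leq 4\lambda^{n_j k'}D\bigl(a^s_{(a_1,a_2)}\bigr),
\]
forcing $D(a^s_{(a_1,a_2)})=0$ and hence, by the compatibility clause (3) of Theorem \ref{teoCwHyp}, $a_1=a_2$, a contradiction. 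Therefore the $f^n(D)$ for $n\in\mathbb{N}$ are pairwise distinct non-regular bi-asymptotic sectors, and step one produces a spine $s_n\in\operatorname{Spin}(f)$ inside each.

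Finally, the pigeonhole principle yields a spine $s$ with $s=s_{n_j}$ for infinitely many $j$; restricting further to a subsequence with $k\mid n_j$, we have $f^{-n_j}(s)=s$, so $s\in D$. Thus $D$ is a non-regular bi-asymptotic sector containing the spine $s$, while by Lemma 3.7 of \cite{ACS} the spine $s$ also lies in a canonical regular bi-asymptotic sector $R_s$. A careful comparison of $D$ with $R_s$---using the fine structural description of stable and unstable arcs inside a regular sector from Section~3 of \cite{ACS} together with the cw$_F$ bound on the number of intersections of local stable and local unstable continua at $s$---yields the desired contradiction: if $D\subset R_s$ then by Proposition 3.6 of \cite{ACS} applied to the regular sub-sector of $R_s$ around $s$, $D$ would coincide with a regular sub-sector of $R_s$, contradicting the non-regularity of $D$; otherwise the boundaries $\partial D$ and $\partial R_s$ cross transversally near $s$, and iteration by $f^{n_j}$ expands these crossings into unboundedly many intersections of local stable and local unstable continua at $s$, contradicting cw$_F$-expansiveness. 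The main obstacle is this last comparison between $D$ and $R_s$: one must convert the geometric picture of a non-regular sector meeting the canonical regular sector of its inner spine into a rigorous count of local stable/unstable intersections incompatible with the cw$_F$ bound, and this requires a careful use of the local structure developed in \cite{ACS}.
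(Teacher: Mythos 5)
Your argument has a genuine gap, and it sits exactly where you acknowledge an ``obstacle'': the final comparison of the non-regular sector $D$ with the regular sector $R_s$ of a spine $s\in D$ is the whole content of the proposition, and nothing in your sketch actually produces the contradiction. The presence of a spine inside a non-regular sector is not by itself contradictory, and cw$_F$-expansiveness only bounds the number of intersections of each fixed pair of local stable/unstable continua by some finite number; to contradict it along your lines you would have to manufacture infinitely many intersections of a single such pair, which your ``iteration by $f^{n_j}$ expands these crossings'' step does not establish (note also that forward iterates $f^n(D)$ need not remain bi-asymptotic sectors, since the unstable boundary arc grows under forward iteration, so applying the sub-sector/spine machinery to $f^n(D)$ is itself unjustified). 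Moreover, your first step is shaky: the inward continuation of $a^s$ past the non-regular corner need not meet $a^u$ again at all --- it may terminate at a spine in $\inte(D)$ --- so the ``finitely many sub-discs, terminate, extract a regular sub-sector'' decomposition is not available as stated; this dichotomy (separate the sector or end at a spine) is precisely Lemma 2.13 of \cite{ACS}, which has to be invoked, not bypassed.

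The missing idea is the way the hypothesis of finitely many spines actually enters: by Theorem 1.1 of \cite{ACS} it upgrades cw$_F$- to cw$_2$-hyperbolicity, i.e.\ any pair of local stable/unstable continua meets in at most two points. With that quantitative bound, the paper's proof is short: if the corner $a_2$ of $D$ is non-regular, Lemma 2.13 of \cite{ACS} applied to the arcs $C^s_D(a_1)\setminus\{a^s\}$ and $C^u_D(a_1)\setminus\{a^u\}$ shows (the separating alternative being excluded by cw$_2$) that each ends at a spine in $\inte(D)$, and these two spines are distinct; the cw-local-product-structure then forces the stable and unstable continua of these two spines to intersect, so long bi-asymptotic sectors around them meet in at least four points, contradicting cw$_2$-expansiveness. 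You only use the finiteness of spines to get their periodicity, which is not where the hypothesis does its work; without the bound $N=2$ your scheme has no mechanism to turn the geometry into a count that violates anything.
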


\begin{proof}
If $f\colon S\to S$ is a cw$_F$-hyperbolic surface homeomorphism admitting only a finite number of spines, then $f$ is cw$_2$-hyperbolic (see \cite{ACS}*{Theorem1.1}). This means that any pair of local stable/unstable continua can only intersect in two distinct points. The proof is the observation that the existence of a non-regular bi-asymptotic sector implies the existence of at least three distinct intersections between pairs of local stable/unstable continua. Let $D$ be a bi-asymptotic sector bounded by a stable arc $a^s$ and an unstable arc $a^u$ intersecting at $a_1$ and $a_2$, and assume that the intersection in $a_2$ is not regular as in Figure \ref{fig:setorbugado1}. The arcs $C^s_D(a_1)\setminus\{a^s\}$ and $C^u_D(a_1)\setminus\{a^u\}$ either separate $D$ or contain a spine in $\inte(D)$ (see Lemma 2.13 in \cite{ACS}). The cw$_2$-expansiveness ensures the first case is not possible (see Figure 21 in \cite{ACS}), so both arcs $C^s_D(a_1)\setminus\{a^s\}$ and $C^u_D(a_1)\setminus\{a^u\}$ end in a spine in $\inte(D)$. The cw$_2$-expansiveness also ensure that these spines are distinct. The cw-local product structure ensure that the stable and unstable continua of these spines intersect and, hence, long bi-asymptotic sectors around the spines must intersect in at least four distinct points (see the proof of Theorem 1.1 and Figure 26 in \cite{ACS}). This contradicts cw$_2$-expansiveness and finishes the proof.
\end{proof}

\vspace{+0.6cm}

\section{Isometric local stable/unstable holonomies and transitivity}
In this section, we prove transitivity of $cw$-hyperbolic homeomorphism assuming isometric stable/unstable holonomies. In \cite{Ar} isometric stable/unstable holonomies are used to prove transitivity for topologically hyperbolic homeomorphisms. We prove a similar result in the case of cw-hyperbolic homeomorphisms. In this case, the proof is, indeed, an adaptation of the techniques of \cite{Ar}. We note that the joint holonomies will not be necessary in the proof.

\begin{definition}[Isometric stable/unstable holonomies]
We say that $\pi^s_{x,y}$ is an \emph{isometric local stable holonomy} if for every continuum $C\subset C^s_{\eps}(x)$ and $p,q\in C$, there are $C^*\subset C^s_{\eps}(y)$, $p^*\in \pi^s_{x,y}(p)$ and $q^*\in \pi^s_{x,y}(q)$ such that 
$$D(C^*_{(p^*,q^*)})=D(C_{(p,q)}).$$ 
Analogously, we can define what means $\pi^u_{x,y}$ to be a \emph{isometric local unstable holonomy}. We say that $f$ has isometric stable (unstable) holonomies if all local stable (unstable) holonomies of $f$ are isometric.
\end{definition}
As noticed in \cite{Ar}, this property does not necessarily hold even if $f$ is topologically hyperbolic, but assuming this as hypothesis we can prove the following result.

\begin{theorem}\label{trans}
If $f\colon X\to X$ is a $cw$-hyperbolic homeomorphism that has isometric stable (or unstable) holonomies, then $f$ is transitive.
\end{theorem}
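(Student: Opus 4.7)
The plan is to adapt Artigue's transitivity argument from \cite{Ar} to the cw-hyperbolic setting. By replacing $f$ with $f^{-1}$ if needed (which exchanges the roles of the stable and unstable sides), we may assume that the local stable holonomies $\pi^s_{x,y}$ are isometric with respect to the hyperbolic cw-metric $D$. It then suffices to show that for every pair of non-empty open sets $U, V \subset X$ there exists $n \in \mathbb{N}$ with $f^n(U) \cap V \neq \emptyset$.

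The first step is to invoke the Spectral Decomposition Theorem established in \cite{ACCV3} for cw-hyperbolic homeomorphisms, writing $\Omega(f) = \Omega_1 \cup \cdots \cup \Omega_k$ with $f|_{\Omega_i}$ transitive on each basic piece. I would then establish both $\Omega(f)=X$ and $k=1$. The isometric hypothesis is exploited as a rigidity statement: whenever $d(a,b)<\delta$, the map $\pi^s_{a,b}$ transports subcontinua from $C^s_{\delta}(a)$ to $C^s_{\eps}(b)$ preserving $D$-lengths exactly. Combined with the forward contraction of stable continua at rate $\lambda^n$ and the corresponding expansion of unstable continua given by Theorem \ref{teoCwHyp}, this rigidity allows one to transfer dynamical information from one basic piece to any nearby point, preventing distinct basic pieces from coexisting and ruling out wandering behavior.

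Concretely, for a given pair $U, V$, I would pick $x \in U$ and $y \in V$ and examine $C^u_\eps(x)$. Using compactness together with the L-shadowing property proved in \cite{ACCV3}, I expect to produce an iterate $n$ and a point $a$ near $x$ such that $d(f^n(a), y) < \delta$. The cw-local product structure then yields a point $z \in C^u_\eps(f^n(a)) \cap C^s_\eps(y)$, and the isometric stable holonomy $\pi^s_{f^n(a), y}$ matches corresponding subcontinua without distortion; this should allow one to adjust $a$ inside $U$ so that $f^n(a)$ lies in $V$.

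The main obstacle will be to rigorously convert the local isometry of stable holonomies into the global conclusions that $k=1$ and $\Omega(f)=X$, since in the cw-hyperbolic scenario the non-uniqueness of local intersections (one can have $\#(C^u_\eps(x) \cap C^s_\eps(y)) > 1$) complicates the iterative construction of \cite{Ar}: one must carefully select which branch of the intersection to follow so that distortion estimates remain sharp. A related technical point is that Theorem \ref{trans} assumes isometry on only one family of holonomies, so the argument must exploit this one-sided rigidity, propagating it to the other side via $f$-invariance of $D$ rather than relying on symmetric hypotheses.
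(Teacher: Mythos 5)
There is a genuine gap here: your proposal never supplies the mechanism by which the isometric holonomies actually force transitivity, and the one concrete step you do propose is circular. Producing, for arbitrary $x\in U$ and $y\in V$, a point $a$ near $x$ and an iterate $n$ with $d(f^n(a),y)<\delta$ is essentially the statement to be proved: L-shadowing only converts pseudo-orbits into true orbits, and when $f$ is not transitive there need not exist any $\delta$-pseudo-orbit from a neighborhood of $x$ to a neighborhood of $y$ at all (the points may lie in different chain-recurrent classes, which is exactly the situation you must exclude). You acknowledge that converting the local isometry into ``$k=1$ and $\Omega(f)=X$'' is the main obstacle, but that obstacle is the entire content of the theorem; the outline of \cite{Ar} you cite does not by itself tell you how to rule out several basic pieces in the cw setting.

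The paper's argument fills this gap with two specific ingredients you are missing. First, non-transitivity is converted into the existence of an attractor class $A$ and a repellor class $R$ (Proposition \ref{ar}, which uses finiteness of chain classes, L-shadowing and the decomposition $X=\bigcup W^s(C_i)=\bigcup W^u(C_i)$, plus a maximality argument on an order between classes). Second, one takes a point $x$ with $f^{-n}(x)\to R$, $f^{n}(x)\to A$, builds via the cw-local product structure a point $p$ with $f^{-n}(p)\in C^u_\eps(y)$, $y\in R$, and $f^{n}(p)\in C^s_\eps(z)$, $z\in A$, and then works along the unstable continuum $\gamma=f^{2n}(C^u_\eps(y))$, which joins $f^{2n}(y)\in R$ to $f^n(p)$ near $A$. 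Choosing a finite $\delta$-chain $q_0,\dots,q_k$ on $\gamma$ and applying the isometric stable holonomy at each step transports a stable continuum of \emph{exactly constant} $D$-length $D(C_{(f^n(p),z)})>0$ along the chain, while the transported endpoints remain in $A$ because they are joined to $z$ by unstable continua contained in the attractor. After finitely many steps one obtains a point lying both in $A$ and in $C^s_\eps(f^{2n}(y))\subset R$, contradicting $A\cap R=\emptyset$. The exact preservation of $D$-length under the holonomy is what prevents the stable continuum from degenerating before reaching $R$; without an argument of this kind (or some substitute), your rigidity heuristic does not yield the conclusion, so the proposal as it stands does not prove the theorem.
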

 
Before proving this theorem, we need to prove some properties about the spectral decomposition of cw-hyperbolic homeomorphisms that were not explored in \cite{ACCV3}. The following was proved in \cite{ACCV3}:
 
\begin{theorem}[Spectral Decomposition]\label{spec}
If $f$ is a $cw$-hyperbolic homeomorphism, then
\begin{enumerate}
     \item $f$ admits only a finite number of chain-recurrent classes $C_1,\dots,C_k$,
     \item each class either is expansive or admits arbitrarily small totally disconnected topological semi-horseshoes,
     \item each chain recurrent classes is transitive and admits a periodic decomposition into elementary sets $D_1,...,D_n$,
     \item $f^n$ restricted to each elementary set is topologically mixing.
 \end{enumerate}
 \end{theorem}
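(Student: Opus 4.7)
The plan is to follow the classical strategy of Smale's spectral decomposition theorem, replacing the canonical coordinates of the topologically hyperbolic setting by the cw-local product structure and using the hyperbolic cw-metric from Theorem \ref{teoCwHyp} as a substitute for Fathi's hyperbolic metric. The essential additional input is the $L$-shadowing property, which cw-hyperbolic homeomorphisms possess (see \cite{ACCV}); this will allow us to upgrade chain-recurrence to true recurrence and pseudo-orbits to genuine orbits at the level of continua.

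For items (1), (3), and (4), I would proceed as follows. Fix the cw-expansive constant $c>0$, let $\eps=c/2$, and let $\delta>0$ come from the cw-local product structure, so that $C^s_\eps(x)\cap C^u_\eps(y)\neq\emptyset$ whenever $d(x,y)<\delta$. Given a chain-recurrent class $\mathcal{R}$ and a point $p\in\mathcal{R}$, any $\delta$-pseudo-orbit from $p$ to itself that passes through a nearby chain-recurrent point $q$ can be shadowed; combining this with an intersection point in $C^s_\eps(p)\cap C^u_\eps(q)$ one sees that all chain-recurrent points in a definite neighborhood of $p$ lie in $\mathcal{R}$. Hence $\mathcal{R}$ is open in the chain-recurrent set, and compactness yields (1). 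For (3), chain-transitivity inside $\mathcal{R}$ combined with the shadowing property promotes to topological transitivity, and the standard $\gcd$-of-return-times argument produces the cyclic decomposition $D_1,\dots,D_n$. Item (4) then follows by applying the same transitivity argument to $f^n$ on each $D_i$, with the maximality of $n$ in the $\gcd$ definition ruling out further periodic sub-decompositions and hence delivering topological mixing.

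The main obstacle is item (2): the dichotomy that every non-expansive class contains arbitrarily small totally disconnected topological semi-horseshoes. If $\mathcal{R}$ is not expansive, then for arbitrarily small $\eta>0$ there exist $x,y\in\mathcal{R}$ with $y\in W^s_\eta(x)\cap W^u_\eta(x)\setminus\{x\}$, and density of periodic points in $\mathcal{R}$ (extracted from chain-recurrence combined with shadowing and the cw-local product structure) allows one to place such a pair near a periodic orbit. Using this periodic orbit I would then build a small rectangular region $R$ via the cw-local product structure on which some iterate $f^N$ sends several disjoint subrectangles across $R$, yielding a topological semi-conjugacy with a full shift; the fibers of this semi-conjugacy lie in intersections of the form $W^s_c\cap W^u_c$, which are totally disconnected by the very definition of cw-expansiveness. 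The delicate step is arranging this symbolic dynamics at \emph{arbitrarily small} scales rather than at a single fixed scale, which requires iterating the non-expansive pair $(x,y)$ under $f^{\pm 1}$ to exhibit non-expansive behavior on arbitrarily small neighborhoods of the periodic orbit and then carrying out the Markov-rectangle construction inside those neighborhoods, exploiting the hyperbolic contraction rates provided by Theorem \ref{teoCwHyp}.
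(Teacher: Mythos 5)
You should first note that the paper does not prove this statement at all: Theorem \ref{spec} is quoted verbatim from \cite{ACCV3} (``The following was proved in \cite{ACCV3}''), so any comparison is with the argument given there, which rests on the L-shadowing property and the cw-local product structure rather than on periodic orbits. Your sketches of items (1), (3) and (4) are in the right spirit (nonempty intersections $C^s_\eps(p)\cap C^u_\eps(q)$, together with Kato's inclusion $C^s_\eps(x)\subset W^s(x)$, do let you chain nearby chain-recurrent points together, and the gcd-type cyclic decomposition plus mixing of $f^n$ can be run at the level of pseudo-orbits), but your treatment of item (2) contains a genuine gap that is fatal as written: you invoke ``density of periodic points in $\mathcal{R}$ (extracted from chain-recurrence combined with shadowing and the cw-local product structure)'' and then build Markov-type rectangles around a periodic orbit. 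Existence, let alone density, of periodic points is precisely what is \emph{not} known for cw-hyperbolic homeomorphisms; it is the main theorem of the present paper and requires the additional hypothesis of continuous joint stable/unstable holonomies. The introduction explains exactly why your extraction step fails: a periodic pseudo-orbit can be shadowed by a Cantor set of points, none of which need be periodic, so ``shadowing $+$ cw-expansiveness $\Rightarrow$ periodic point'' is unavailable, and using Theorem \ref{spec} as an ingredient obtained this way would in any case be circular with respect to the paper's goals.

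The known proof of item (2) avoids periodic points entirely. If a class $C_i$ is not expansive, one has non-expansive pairs $x\neq y$ with $y\in W^s_\eta(x)\cap W^u_\eta(x)$ at arbitrarily small scales $\eta$; concatenating the two finite orbit segments of $x$ and $y$ one produces, for each word in a full shift, a periodic pseudo-orbit with arbitrarily small jumps, and the L-shadowing property (shadowing with eventual convergence) gives points whose orbits realize these itineraries. This yields a semiconjugacy from a compact invariant set onto a full shift --- the ``topological semi-horseshoe'' --- at arbitrarily small scale, and its fibers sit inside sets of the form $W^s_c(z)\cap W^u_c(z)$, hence are totally disconnected by cw-expansiveness (this last observation of yours is the one correct ingredient of your item (2)). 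So the missing idea is to replace the periodic-orbit/Markov-rectangle construction by the pseudo-orbit/L-shadowing construction of \cite{ACCV}, \cite{ACCV3}; without that replacement, your step (2) does not go through.
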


In the proof of Theorem \ref{trans} we need the following additional result:

\begin{proposition}\label{ar}
If $f$ is a $cw$-hyperbolic homeomorphism, then either $f$ is transitive, or there exist an attractor and a reppelor chain-recurrent classes.
\end{proposition}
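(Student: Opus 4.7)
The plan is to apply the Spectral Decomposition (Theorem~\ref{spec}), which gives finitely many chain recurrent classes $C_1,\dots,C_k$, each transitive, and then argue by cases on $k$.

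If $k=1$, then $C_1 = X$, and hence $f$ is transitive by Theorem~\ref{spec}(3). To verify $C_1 = X$, fix any $x \in X$; since $X$ is compact, the sets $\omega(x)$ and $\alpha(x)$ are non-empty and, being contained in the chain recurrent set of $f$, lie in $C_1$. Pick $p \in \omega(x)$ and $q \in \alpha(x)$. Given $\varepsilon > 0$, the chain-transitivity of $C_1$ provides an $\varepsilon$-pseudo-orbit inside $C_1$ from $p$ to $q$; splicing it between a long enough piece of forward orbit from $x$ ending $\varepsilon$-close to $p$ and a long enough piece of backward orbit from $x$ starting $\varepsilon$-close to $q$ produces an $\varepsilon$-pseudo-orbit from $x$ back to $x$. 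Hence $x$ is chain recurrent, and $C_1 = X$.

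If $k \geq 2$, I introduce the Conley order on the classes by setting $C_i \preceq C_j$ iff for every $\varepsilon > 0$ there is an $\varepsilon$-pseudo-orbit from a point of $C_i$ to a point of $C_j$. By the standard correspondence from Conley's fundamental theorem of dynamical systems, a chain recurrent class $A$ is an attractor iff it is maximal in $\preceq$ (from maximality one extracts, by a pseudo-orbit construction on a compact metric space, a trapping neighborhood $U \supset A$ with $f(\overline{U}) \subset \inte(U)$ and $A = \bigcap_{n \geq 0} f^n(\overline{U})$), and a class is a repellor iff it is minimal in $\preceq$, by dualizing with $f^{-1}$. In a finite poset of size at least two, one can always find a maximal element $A$ and a minimal element $R$ with $A \neq R$: if a single element were both maximal and minimal, it would have to be incomparable with every other class, and we could re-run the search on the sub-poset of remaining classes, which is non-empty. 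This produces the desired attractor-repellor pair.

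The main obstacle is to justify the Conley-theoretic correspondence between maximal (respectively minimal) chain recurrent classes and attractors (respectively repellors) in the present cw-hyperbolic setting, but since this is a purely topological-dynamical statement about homeomorphisms of compact metric spaces, and the finiteness of chain recurrent classes that makes the extraction of a trapping neighborhood automatic is exactly what Theorem~\ref{spec}(1) guarantees, this step can be quoted from the literature rather than re-derived here.
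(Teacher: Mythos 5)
Your route is essentially correct, but it is genuinely different from the paper's, and one step you delegate to the literature needs a small supplement. The paper never invokes Conley theory: it uses Proposition 2.12 of \cite{ACT} (available because cw-hyperbolic homeomorphisms satisfy L-shadowing) to write $X=\bigcup_i W^s(C_i)=\bigcup_i W^u(C_i)$, orders the finitely many classes by $C_i<C_j$ iff $(W^u(C_i)\setminus C_i)\cap (W^s(C_j)\setminus C_j)\neq\emptyset$, and then a maximal class $C$ immediately satisfies $W^u(C)=C$ --- which is literally the paper's definition of attractor --- since any $z\in W^u(C)\setminus C$ lies in some $W^s(C_i)\setminus C_i$ and would push $C$ up in the order. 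Your argument replaces this with the chain order on the classes and the Conley/Morse-decomposition correspondence; that is purely topological (it needs only the finiteness of chain classes from Theorem~\ref{spec}, not shadowing), and your treatment of the one-class case via $\alpha$- and $\omega$-limit sets and splicing is actually more careful than the paper's, which simply asserts that non-transitivity produces several classes.

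The point to watch is that the attractor produced by the cited correspondence is a Conley attractor (a trapping region $U$ with $f(\overline{U})\subset\inte(U)$ and $A=\bigcap_{n\geq0}f^n(\overline{U})$), whereas the proposition, as used later in Theorem~\ref{trans}, needs the paper's asymptotic notion $W^u(C)=C$. Bridging this is short but should be said: (i) if $d(f^{-k}(z),C)\to0$ then $f^{-k}(z)\in U$ for all large $k$, and since the sets $f^n(\overline{U})$ are nested this gives $z\in A$, so $W^u(C)\subset A$; (ii) no class other than $C$ meets $\overline{U}$ (otherwise arbitrarily small chains from $C$ would reach it, contradicting maximality of $C$), hence every $z\in A$ has $\alpha(z),\omega(z)\subset C$, and the same splicing argument you already use in the one-class case shows $z$ is chain recurrent, so $A=C$ and $W^u(C)=C$. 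With these two remarks (and the dual ones for $f^{-1}$ giving the repellor) your citation of the maximal-class/attractor correspondence does the rest; also note that only the implication ``maximal $\Rightarrow$ attractor'' is needed, not the equivalence you state, and that the statement of the proposition does not require the attractor and repellor to be distinct, so your final poset manoeuvre is dispensable.
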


We first recall the definitions needed in the proof. We say that $f\colon X\to X$ is \emph{transitive}, if for any pair $U,V\subset X$ of non-empty open subsets, there exists $n\in\N$ such that $f^n(U)\cap V\neq\emptyset$. A point $x\in X$ is called \emph{chain-recurrent} if for each $\eps>0$ there exists a non-trivial finite $\eps$-pseudo orbit starting and ending at $x$. An $\eps$-pseudo-orbit is a set of points $(x_k)_{k\in I}$ satisfying $d(f(x_k),x_{k+1})\leq\eps$ for every $k\in I$. The set of all chain-recurrent points is called the \textit{chain recurrent set} and is denoted by $CR(f)$. This set can be split into disjoint, compact and invariant subsets, called the \emph{chain-recurrent classes}. The \emph{chain-recurrent class} of a point $x\in X$ is the set of all points $y\in X$ such that for each $\eps>0$ there exist a periodic $\eps$-pseudo orbit containing both $x$ and $y$. If $f$ is transitive, then the whole space $X$ is a chain recurrent class. A chain-recurrent class $C$ is an attractor if $W^u(C)=C$, where $$W^u(C)=\{x\in X; \,\,\, d(f^k(x),C)\to 0 \,\,\,\,\,\, \text{when} \,\,\,\,\,\, k\to-\infty\},$$ and is a reppelor if $W^s(C)=C$, where $$W^s(C)=\{x\in X; \,\,\, d(f^k(x),C)\to 0 \,\,\,\,\,\, \text{when} \,\,\,\,\,\, k\to\infty\}.$$

\begin{proof}[Proof of Proposition \ref{ar}]
If $f$ is not transitive, then there are distinct chain-recurrent classes $C_1,\dots,C_k$. Recall that $f$ has the L-shadowing property \cite{ACCV3} and, hence, Proposition 2.12 in \cite{ACT} ensures that $$X=\bigcup_{i=1}^kW^s(C_i)=\bigcup_{i=1}^kW^u(C_i).$$
We define an order $<$ on the set $\{C_1,\dots,C_k\}$ as follows: we say that $C_i<C_j$ if
$$(W^u(C_i)\setminus\{C_i\})\cap(W^s(C_j)\setminus\{C_j\})\neq\emptyset.$$ Any maximal class $C$ with respect to this order is an attractor. Indeed, if $z\in W^u(C)\setminus\{C\}$, then above equalities ensure that $z\in W^s(C_i)$ for some class $C_i$, and since $z$ is non-wandering, it follows that
$$z\in(W^u(C)\setminus\{C\})\cap(W^s(C_i)\setminus\{C_i\}),$$ that is, $C<C_i$, contradicting the maximality of $C$. Analogously, any minimal class with respect to this order is a reppeller.
\end{proof}

\begin{proof}[Proof of Theorem \ref{trans}]
Assume that local stable holonomies are isometric and that $f$ is not transitive. Proposition \ref{ar} ensures the existence of attractors and reppelors classes of $f$. Consider a reppelor $R$ and an attractor $A$ such that $R<A$ and let $x\in X$ be such that $f^{-n}(x)\to R$ and $f^n(x)\to A$ when $n\to\infty$. We claim that there exist $y\in R$, $z\in A$, $p\in X$ and $n\in\N$ such that 
$$f^{-n}(p)\in C^u_{\eps}(y) \,\,\,\,\,\, \text{and} \,\,\,\,\,\, f^{n}(p)\in C^s_{\eps}(z).$$
Let $\delta\in(0,\frac{\eps}{2})$ be given by the cw-local product structure for $\frac{\eps}{2}$ and let $n\in\N$ be such that $$\diam(f^{2n}(C))<\frac{\delta}{2} \,\,\,\,\,\, \text{for every} \,\,\,\,\,\, C\in\mathcal{C}^s_{\eps},$$
$$d(f^{-n}(x),R)<\frac{\delta}{2} \,\,\,\,\,\, \text{and} \,\,\,\,\,\, d(f^n(x),A)<\frac{\delta}{2}.$$ Thus, there exist $y\in R$, $z\in A$, $$q\in C^u_{\frac{\eps}{2}}(y)\cap C^s_{\frac{\eps}{2}}(f^{-n}(x)) \,\,\,\,\,\, \text{and} \,\,\,\,\,\, q'\in C^u_{\eps}(f^{2n}(q))\cap C^s_{\eps}(z),$$ so $p=f^{-n}(q')$ satisfies $f^{-n}(p)\in C^u_{\eps}(y)$ and $f^{n}(p)\in C^s_{\eps}(z)$.
 
Note that $\gamma=f^{2n}(C^u_{\eps}(y))$ is an unstable continuum containing both $f^{2n}(y)$ and $f^{n}(p)$, and that $f^{2n}(y)\in R$, while $f^{n}(p)$ is $\frac{\delta}{2}$-close to $A$. Consider $q_0,\dots,q_k\in\gamma$ such that $q_0=f^n(p)$, $q_k=f^{2n}(y)$ and $d(q_i,q_{i+1})\leq \delta$ for every $i\in\{1,...,k\}$. Let $C\subset C^s_{\eps}(z)$ be such that $f^n(p),z\in C$. Since local stable holonomies are isometric, there exist $C^*\subset C^s_{\eps}(q_1)$ and $p_1\in C^s_{\eps}(q_1)$ such that $D(C^*_{(q_1,p_1)})=D(C_{(f^n(p),z)})$. Also, there exists $C^{**}\subset C^u_{\eps}(z)\subset A$ such that $p_1,z\in C^{**}$.
Inductively, we obtain continua $(C^*_i)_{i=1}^k$, $(C^{**}_i)_{i=1}^k$, and points $(p_i)_{i=1}^k$ such that 
$$C^*_i\subset C^s_{\eps}(q_i), \,\,\,\,\,\, C^{**}_i\subset C^u_{\eps}(p_{i-1})\subset A, \,\,\,\,\,\, q_i,p_i\in C^*_i \,\,\,\,\,\, p_i,p_{i-1}\in C^{**}_i, \,\,\,\,\,\, \text{and}$$
$$D(C^*_{i(q_i,p_i)})=D(C_{(f^n(p),z)}) \,\,\,\,\,\, \text{for every} \,\,\,\,\,\, i\in\{1,\dots,n\}.$$
Thus, $p_k\in C^{**}_k\subset A$ and $p_k\in C^*_k\subset C^s_{\eps}(q_k)=C^s_{\eps}(f^{2n}(y))\subset R$, contradicting $A\cap R=\emptyset$. This proves that $f$ is transitive. The case where local unstable holonomies are isometric is analogous, the only change is considering the curve $\gamma$ as $f^{-2n}(C^s_{\eps}(z))$ and use the unstable holonomies along $\gamma$.

\end{proof}

\vspace{+0.8cm}

\hspace{-0.4cm}\textbf{Acknowledgments.}
The authors thank Alfonso Artigue for discussions about his work \cite{Ar} during the preparation of this work. Bernardo Carvalho was supported by Progetto di Eccellenza MatMod@TOV grant number PRIN 2017S35EHN, by CNPq grant number 405916/2018-3, and Elias Rego was also supported by Fapemig grant number APQ-00036-22.

\vspace{1.0cm}

{\em B. Carvalho}
\vspace{0.2cm}

\noindent

Dipartimento di Matematica,

Università degli Studi di Roma Tor Vergata

Via Cracovia n.50 - 00133

Roma - RM, Italy

\email{mldbnr01@uniroma2.it}

\vspace{1.0cm}

{\em E. Rego}
\vspace{0.2cm}

Department of Mathematics

Southern University of Science and Technology

Xueyuan Blvd n. 1088, Nanshan 

Shenzhen -Guangdong, China 

\email{rego@sustechedu.cn}

\noindent

\vspace{0.2cm}

\end{document}